\newcommand*{\mailto}[1]{\href{mailto:#1}{\nolinkurl{#1}}}
\newtheorem{theorem}{Theorem}[section]
\newtheorem{lemma}{Lemma}[section]
\newtheorem{remark}{Remark}[section]
\newtheorem{corollary}{Corollary}[section]
\newtheorem{example}{Example}[section]
\newtheorem{definition}{Definition}[section]
\def\R{\mathbb R}
\def\C{\mathbb C}
\def\N{\mathbb N}
\def\I{\mathrm i}
\def\E{\mathrm e}
\def\gt{\mathfrak{t}}
\def\gD{\mathfrak{D}}
\def\ga{\mathfrak{a}}
\def\cI{\mathcal{I}}
\def\cN{\mathcal{N}}
\newcommand{\be}{\begin{equation}}
\newcommand{\ee}{\end{equation}}
\newcommand{\nn}{\nonumber}
\def\im{\mathrm{Im}}
\def\re{\mathrm{Re}}
\def\sgn{\mathrm{sgn}}
\def\loc{\mathrm{loc}}
\def\dom{\mathrm{dom}}
\def\Span{\mathrm{span}}
\numberwithin{equation}{section}
\begin{document}

\title[The HELP inequality and indefinite spectral problems]{On a necessary aspect for the Riesz basis property for indefinite Sturm-Liouville problems}

\author[A.~Kostenko]{Aleksey Kostenko}
\address{Institute of Applied Mathematics and Mechanics\\
NAS of Ukraine\\ R. Luxemburg str. 74\\
Donetsk 83114\\ Ukraine\\ and \\
Faculty of Mathematics\\ University of Vienna\\
Nordbergstrasse 15\\ 1090 Wien\\ Austria}

\email{\mailto{duzer80@gmail.com; Oleksiy.Kostenko@univie.ac.at}}

\thanks{{\it The research was funded by the Austrian Science Fund (FWF): M1309--N13}}

\keywords{HELP inequality, indefinite Sturm--Liouville problem, Riesz basis, linear resolvent growth condition}
\subjclass[2010]{Primary 34B24; Secondary 26D10, 34L10, 47A10,47A75}

\begin{abstract}
In 1996, H. Volkmer observed that the inequality
\[
\left(\int_{-1}^1\frac{1}{|r|}|f'|dx\right)^2 \le K^2 \int_{-1}^1|f|^2dx\int_{-1}^1\Big|\Big(\frac{1}{r}f'\Big)'\Big|^2dx
\]
is satisfied with some positive constant $K>0$ for a certain class of functions $f$ on $[-1,1]$ if 
the eigenfunctions of the problem
\[
-y''=\lambda\, r(x)y,\quad y(-1)=y(1)=0
\]
form a Riesz basis of the Hilbert space $L^2_{|r|}(-1,1)$. Here the weight $r\in L^1(-1,1)$ is assumed to satisfy  $xr(x)>0$ a.e. on $[-1,1]$.

We present two criteria in terms of Weyl--Titchmarsh $m$-functions for the Volkmer inequality to be valid.
Using these results we show that this inequality is valid if the operator associated with the spectral problem satisfies the linear resolvent growth condition. In particular,  we show that the Riesz basis property of eigenfunctions is equivalent to the linear resolvent growth if $r$ is odd.   
\end{abstract}

\maketitle

\section{Introduction}\label{sec:intro}
After the paper \cite{ev72} by W.N. Everitt, the integral inequality, now known as the Hardy--Littlewood--Polya--Everitt (HELP) inequality,
\be\label{eq:iHELP}
\left( \int_0^b (p|f'|+q|f|^2)dx\right)^2\le K^2 \int_0^b |f|^2wdx\int_0^b \Big|\frac{1}{w}\big((pf')'+qf\big)\Big|^2wdx,\quad (f\in\gD_{\max}),
\ee 
became one the most extensive area of research in spectral theory of Sturm--Liouville equations. Here $K$ is a positive constant; the coefficients $p^{-1},q,w\in L^1_{\loc}[0,b)$ are real valued and $w$ is assumed to be positive on $[0,b)$; $\gD_{\max}$ is the maximal linear manifold of functions for which both integrals on the right-hand side of \eqref{eq:iHELP} are finite. The famous Hardy--Littlewood inequality \cite[Chapter VII]{hlp} is a special case of \eqref{eq:iHELP} with $K=2$, $b=+\infty$, $p=w\equiv1$ and $q\equiv 0$ on $\R_+:=[0,+\infty)$. 

In \cite{ev72}, Everitt connected the above inequality with the Weyl--Titchmarsh $m$-function of the Sturm--Liouville differential equation
\be\label{eq:iSL}
-(p(x)f')'+q(x) f=\lambda\, w(x) y, \quad x\in[0,b).
\ee 
Under the assumptions $b=+\infty$, $w\equiv 1$ on $\R_+$ and $\eqref{eq:iSL}$ is regular at $x=0$ and  strong limit point at $+\infty$, Everitt obtained beautifull necessary and sufficient condition for the validity of the HELP inequality in terms of the  $m$-function associated with \eqref{eq:iSL} (see e.g. Theorem \ref{th:ev71} below). Moreover, the best possible value of $K$ and all cases of equality in \eqref{eq:iHELP} are indicated in terms of $m$. The proof in \cite{ev72} follows the line of one of the Hardy--Littlewood proofs and of course the analysis of \cite{ev72} extends to a wider setting: for the case of nonconstant  $w$ see \cite{ee82}, the case of a regular endpoint $b$ or, more general, the limit circle case at $b$ is addressed in \cite{Ben84, Ben87} and \cite{ee91}. Note also that Evans and Zettl \cite{ez78} found a general operator theoretic approach to \eqref{eq:iHELP} (see also \cite{ee82}). The latter allows to study the inequalities of the type \eqref{eq:iHELP} for other types of differential and difference operators, operators on trees etc. For further information on HELP type inequalities we refer to \cite{Ben84, Ben87, br08, ee82, ee91} (see also references therein).     

Another extensive area of research is concerned with the basis properties of (generalized) eigenfunctions of the problem
\be\label{eq:i_sp}
-y''=\lambda\, r(x)y,\quad x\in [-1,1]; \qquad y(-1)=y(1)=0.
\ee 
It is assumed that $r\in L^1(-1,1)$ and $xr(x)>0$ a.e. on $[-1,1]$, i.e., $r$ changes sign at $x=0$. It is well known that the spectrum of this problem is real and discrete, its eigenvalues are simple and accumulate at both $+\infty$ and $-\infty$. However, the eigenfunctions of \eqref{eq:i_sp} are not orthogonal in the Hilbert space $L^2_{|r|}(-1,1)$. Motivated by various problems arising in physics, scattering and transport theory, the problem of whether or not the eigenfunctions of \eqref{eq:i_sp} form a Riesz basis of $L^2_{|r|}(-1,1)$ attracted a lot of attention since the mid of seventies of the last century (see e.g. \cite{Be85, BF, BF2, CFK, KarKos, KKM09, KM08, AK11, Par03, Pya89, Vol_96}). The first general sufficient condition for the Riesz basis property was obtained by Beals in \cite{Be85} and later it has been extended and generalized by many authors (for a survey we refer to the recent papers \cite{BF2, KKM09, AK11}).  

For a long time the following question remained open: {\em are there weights $r\in L^1(-1,1)$ such that the eigenfunctions of \eqref{eq:i_sp} do not form a Riesz basis of $L^2_{|r|}(-1,1)$?}  It was answered in the affirmative by H. Volkmer in \cite{Vol_96}. Namely,  Volkmer found the following connection between the Riesz basis property for \eqref{eq:i_sp} and HELP type inequalities.
\begin{theorem}[Volkmer]\label{lem:vol}
Let $r\in L^1(-1,1)$
\footnote{In \cite{Vol_96}, Theorem \ref{lem:vol} was established under the assumption $r\in L^\infty(-1,1)$. It is noticed in \cite[Theorem 3.1]{BF2} that the statement remains true for $L^1$ weights.} 
and $xr(x)>0$ a.e. on $[-1,1]$. 
If the eigenfunctions of the problem \eqref{eq:i_sp}
form a Riesz basis of $L^2_{|r|}(-1,1)$, then there is $K>0$ such that
\be\label{eq:vol}
\left(\int_{-1}^1\frac{1}{|r|}|f'|dx\right)^2 \le K^2 \int_{-1}^1|f|^2dx\int_{-1}^1\Big|\big(\frac{1}{r}f'\big)'\Big|^2dx,\quad (f\in\dom(A)),
\ee
where 
\[
\dom(A)=\{f\in L^2(-1,1):\ f, r^{-1}f'\in AC[-1,1],\ (r^{-1}f')(\pm 1)=0,\ (r^{-1}f')'\in L^2(-1,1)\}.
\]
If in addition $r$ is odd, $r(x)=-r(-x)$ a.e. on $[-1,1]$, then the inequality 
\be\label{eq:iHELP+}
\left( \int_0^1 \frac{1}{r}|f'|dx\right)^2\le K^2 \int_0^1 |f|^2dx\int_0^1\Big|\big(\frac{1}{r}f'\big)'\Big|^2rdx,\quad (f\in\dom(A_+)),
\ee
is valid, i.e., there is $K>0$ such that \eqref{eq:vol} holds for all $f\in\dom(A_+)$, if the eigenfunctions of \eqref{eq:i_sp} form a Riesz basis of $L^2_{|r|}(-1,1)$. Here 
\[
\dom(A_+):=\{f\in L^2(0,1): \, f, r^{-1}f'\in AC[0,1], \ (r^{-1}f')(1)=0,\ (r^{-1}f')'\in L^2(0,1)\}.
\]
\end{theorem}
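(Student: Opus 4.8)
The plan is to turn \eqref{eq:vol} into a weighted $\ell^2$-inequality for the coefficients in a suitable orthonormal basis and then settle it by the Cauchy--Schwarz inequality; the Riesz basis hypothesis enters only through the Bessel bound it supplies. Write $L$ for the Dirichlet realization of $-r^{-1}(d/dx)^2$; it is self-adjoint in the Krein space $\big(L^2_{|r|}(-1,1),[\cdot,\cdot]\big)$, $[u,v]:=\int_{-1}^1 u\bar v\,r\,dx$, so its (simple) eigenfunctions $y_n$, $-y_n''=\lambda_n r y_n$, $y_n(\pm1)=0$, satisfy $[y_n,y_m]=0$ for $n\ne m$, while $\lambda_n[y_n,y_n]=\int_{-1}^1|y_n'|^2\,dx>0$; I would normalize so that $[y_n,y_n]=\varepsilon_n:=\sgn\lambda_n$. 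The first ingredient I would set up is the auxiliary system $e_0:=2^{-1/2}$, $e_n:=|\lambda_n|^{-1/2}y_n'$ in $L^2(-1,1)$: using $y_n(\pm1)=0$ and $-y_n''=\lambda_n r y_n$ one computes $\langle e_n,e_0\rangle_{L^2}=0$, $\langle e_n,e_m\rangle_{L^2}=|\lambda_n\lambda_m|^{-1/2}\lambda_m[y_n,y_m]=\delta_{nm}$, and the system is complete (if $h\perp e_0,e_n$ for all $n$ then $H(x):=\int_{-1}^x h$ has $H(\pm1)=0$ and $0=\int_{-1}^1 h\,\overline{y_n'}\,dx=\lambda_n[H,y_n]$ for all $n$, so $H\equiv0$ by completeness of $\{y_n\}$, hence $h\equiv0$). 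Thus $\{e_0\}\cup\{e_n\}$ is an orthonormal basis of $L^2(-1,1)$.

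Next, fixing $f\in\dom(A)$, I would set $g:=r^{-1}f'$, so $g\in AC[-1,1]$, $g(\pm1)=0$, $g'=(r^{-1}f')'\in L^2(-1,1)$, and write $f=c_0 e_0+\sum_n c_n e_n$ with $\|f\|_{L^2}^2=|c_0|^2+\sum_n|c_n|^2$. Two integrations by parts, using $f,g\in AC$, $y_n(\pm1)=0=g(\pm1)$ and $-y_n''=\lambda_n r y_n$, give
\begin{align*}
[g,y_n]&=\int_{-1}^1 f'\overline{y_n}\,dx=-\int_{-1}^1 f\,\overline{y_n'}\,dx=-|\lambda_n|^{1/2}c_n,\\
\int_{-1}^1 g'\,\overline{y_n'}\,dx&=\lambda_n[g,y_n]=-\lambda_n|\lambda_n|^{1/2}c_n.
\end{align*}
Since also $\int_{-1}^1 g'\,dx=g(1)-g(-1)=0$, the second line yields the coefficients of $g'$ in $\{e_0\}\cup\{e_n\}$, whence $\int_{-1}^1|(r^{-1}f')'|^2\,dx=\sum_n\lambda_n^2|c_n|^2$. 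On the other hand, $\{y_n\}$ is a Riesz basis which is $[\cdot,\cdot]$-orthogonal with $[y_n,y_n]=\varepsilon_n$, so its biorthogonal family is $\{\varepsilon_n J y_n\}$ ($J:=\sgn r$, a unitary involution on $L^2_{|r|}$), giving $g=\sum_n\varepsilon_n[g,y_n]y_n$ and, with $C$ the upper Riesz constant, $\|g\|_{L^2_{|r|}}^2\le C^2\sum_n|[g,y_n]|^2$. Combining everything with the Cauchy--Schwarz inequality in $\ell^2$,
\be
\int_{-1}^1\frac{|f'|^2}{|r|}\,dx=\|g\|_{L^2_{|r|}}^2\le C^2\sum_n|\lambda_n|\,|c_n|^2\le C^2\Big(\sum_n|c_n|^2\Big)^{1/2}\Big(\sum_n\lambda_n^2|c_n|^2\Big)^{1/2},
\ee
and the right-hand side equals $C^2\|f\|_{L^2}\,\big\|(r^{-1}f')'\big\|_{L^2}$, which is \eqref{eq:vol} with $K=C^2$.

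For the second part, assuming $r$ is odd and $f\in\dom(A_+)$, I would let $\tilde f$ be the even extension of $f$ to $(-1,1)$. Then $\tilde f\in AC[-1,1]$; since $r$ is odd, $r^{-1}\tilde f'$ is the even extension of $r^{-1}f'$, hence lies in $AC[-1,1]$ and vanishes at $\pm1$, and $(r^{-1}\tilde f')'\in L^2(-1,1)$, so $\tilde f\in\dom(A)$. Applying \eqref{eq:vol} to $\tilde f$ and using that $|r|^{-1}|\tilde f'|^2$, $|\tilde f|^2$, $|(r^{-1}\tilde f')'|^2$ are even (each integral over $(-1,1)$ being twice that over $(0,1)$, where $|r|=r$) gives, after cancelling the factors $2$, the inequality \eqref{eq:iHELP+} for $f$.

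The main obstacle is structural rather than computational. One must know that the Dirichlet realization $L$ is $J$-self-adjoint in $L^2_{|r|}(-1,1)$ with simple spectrum (so the $y_n$ are $[\cdot,\cdot]$-orthogonal and complete), that the biorthogonal family of the Riesz basis $\{y_n\}$ is $\{\varepsilon_n J y_n\}$ (uniqueness of the biorthogonal system of a Riesz basis), that the normalization $[y_n,y_n]=\varepsilon_n$ is compatible with the Riesz basis property, and that the termwise manipulations of the eigenfunction series are legitimate; once these are in place, the orthonormality and completeness of $\{e_0\}\cup\{e_n\}$, the two integration-by-parts identities, and the Cauchy--Schwarz step are routine, and the Riesz basis hypothesis is used only through completeness of $\{y_n\}$ and the bound $\big\|\sum_n a_n y_n\big\|_{L^2_{|r|}}^2\le C^2\sum_n|a_n|^2$. (Alternatively one could run the argument through Everitt's $m$-function criterion for the HELP inequality \eqref{eq:iHELP}, but the direct route above is shorter and displays the role of the Riesz basis property.)
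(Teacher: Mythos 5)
The paper does not prove this theorem; it is quoted from \cite{Vol_96} and \cite{BF2}, so there is no internal proof to compare against. Your argument is correct, and it is essentially Volkmer's original route: expand in the orthonormal system $\{e_0\}\cup\{|\lambda_n|^{-1/2}y_n'\}$ of $L^2(-1,1)$, convert the three integrals in \eqref{eq:vol} into weighted $\ell^2$-sums of the coefficients $c_n$ via integration by parts (this is where $(r^{-1}f')(\pm1)=0$ and $y_n(\pm1)=0$ are used), bound $\|r^{-1}f'\|_{L^2_{|r|}}^2$ by $C^2\sum_n|\lambda_n||c_n|^2$ using the upper Riesz bound, and finish with Cauchy--Schwarz; the reduction of the odd case to \eqref{eq:iHELP+} by even reflection is also the standard one. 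Two small points are worth making explicit. First, you correctly read the misprinted $|f'|$ in \eqref{eq:vol} and \eqref{eq:iHELP+} as the Dirichlet integral $\int|r|^{-1}|f'|^2\,dx$ (compare \eqref{eq:volkmer} and \eqref{eq:help}), which is the intended statement. Second, the compatibility of the normalization $[y_n,y_n]=\varepsilon_n$ with the Riesz basis hypothesis, which you list as an ingredient but do not verify, does require an argument: if $u_n:=y_n/\|y_n\|_{L^2_{|r|}}$ form a Riesz basis, their (unique) biorthogonal system $\{[u_n,u_n]^{-1}Ju_n\}$ is again a Riesz basis, hence norm-bounded, so $\inf_n|[u_n,u_n]|>0$; together with $|[u_n,u_n]|\le 1$ this shows the $[\cdot,\cdot]$-normalized eigenfunctions differ from the $u_n$ by factors bounded above and below, and therefore still form a Riesz basis. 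With that supplied, the proof is complete.
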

Noting that there are weights such that \eqref{eq:iHELP+} is not valid, Volkmer gave a positive answer to the existence problem. Moreover, using a Baire category argument, it is noticed in \cite{Vol_96} that, in general,  the eigenfunctions of \eqref{eq:i_sp} do not form a Riesz basis of $L^2_{|r|}$ if $r$ is odd. Concrete examples of odd weights were given later by Fleige, Abasheeva and Pyatkov (we refer for details to \cite{BF2}). Moreover, using Pyatkov's interpolation criterion \cite{Pya89}, Parfenov \cite{Par03} found a necessary and sufficient condition for the Riesz basis property under the assumption that $r$ is odd. Notice that the problem on the Riesz basis property for \eqref{eq:i_sp} is still open if the oddness assumption is dropped. The most recent results can be found in \cite{BF2, CFK} (see also references therein).

The main objective of this paper is to investigate the inequality of Volkmer \eqref{eq:vol} in the general case, i.e., without the assumption that $r$ is odd. Our main aim is to find a criterion for the validity of \eqref{eq:vol} in terms of Weyl--Titchmarsh $m$-functions. We are motivated by the papers \cite{Ben84, Ben87}, where Bennewitz gave a necessary and sufficient condition for the validity of \eqref{eq:iHELP+} in terms of the weight $r$ (see Theorem \ref{Bennewitz_HELP}). His proof is based on the analysis of the asymptotic behavior of the corresponding Weyl--Titchmarsh $m$-function (see Section \ref{sec:help} for details). It is interesting to note that the class of weights such that \eqref{eq:iHELP+} is valid coincides with the class of odd weights such that \eqref{eq:i_sp} has a Riesz basis property, i.e., {\em if $r$ is odd, then the eigenfunctions of \eqref{eq:i_sp} form a Riesz basis of $L^2_{|r|}(-1,1)$ if and only if \eqref{eq:iHELP+} is valid} (however, the latter equivalence was first observed in \cite{BF}\footnote{It seems that the paper \cite{Ben87} is not widely known among authors studying the Riesz basis property of eigenfunctions for indefinite spectral problems. In particular, in \cite{BF}, this equivalence was established by using a different argument.}, see also \cite{BF2}). 

We present two criteria for the validity of \eqref{eq:vol} in terms of $m$-functions associated with \eqref{eq:i_sp}. The first criterion (Theorem \ref{th:crit_mf}) is formulated in the form similar to the classical Everitt criterion. This criterion also provides the best possible constant $K$ in \eqref{eq:vol} in terms of $m$-coefficients. However, Everitt type conditions require the knowledge of  the asymptotic behavior of $m$-functions in some sector in the upper half-plane $\C_+$, which contains the imaginary semi-axis $\I\R_+$.  It turns out that to give the answer on whether or not \eqref{eq:vol} is valid it suffices to know the behavior of $m$-functions along $\I \R_+$. This  is the content of our main result, Theorem \ref{th:criterion}. Namely, let $m_+$ and $m_-$ be the $m$-functions corresponding to \eqref{eq:i_sp} on $(0,1)$ and $(-1,0)$, respectively (for definitions see Section \ref{ss:2.01}). {\em Then \eqref{eq:vol} is valid if and only if }
\be\label{eq:i_crit}
\sup_{y>0}\frac{\re (m_+(\I y)+m_-(\I y))}{|m_+(\I y)-m_-(-\I y)|}<+\infty.
\ee
The latter in particular implies that \eqref{eq:iHELP+} is valid precisely if 
\be\label{eq:i_crit+}
\sup_{y>0}\frac{\re\, m_+(\I y)}{\im \, m_+(\I y)}<+\infty.
\ee
To the best of our knowledge this criterion for the validity of the HELP inequality \eqref{eq:iHELP+} seems to be new.
  
Let us also note that in the series of papers \cite{KarKos, KKM09, KM08} several necessary and sufficient conditions, formulated  in terms of $m$-functions, for the Riesz basis property of eigenfunctions of \eqref{eq:i_sp} have been found. In particular, it is shown in \cite{KarKos} that the condition
\be\label{eq:i_lrg}
\sup_{\lambda\in\C_+}\frac{\im (m_\pm^r(\lambda))}{|m_+^r(\lambda)+ m_-^r(-\lambda)|}<\infty
\ee
is necessary for the Riesz basis property of eigenfunctions of \eqref{eq:i_sp}. Here $m_+^r$ and $m_-^r$ are the $m$-functions associated with the problem \eqref{eq:i_sp} (see Section \ref{sec:LRG} for further details). 
On the other hand (see \cite{KarKos}),  \eqref{eq:i_lrg} holds true if the operator $H$ (see \eqref{eq:H}) associated with \eqref{eq:i_sp} satisfies {\em the linear resolvent growth  (LRG)  condition:}  
\[
\|(H - \lambda)^{-1}\|\le \frac{C}{|\im \, \lambda|},\quad  (\lambda\in\C\setminus\R).
\]
Here $C>0$ is a positive constant independent of $\lambda$. Let us also mention that it is shown by M. Malamud and the author \cite{Mal} that \eqref{eq:i_lrg} is also sufficient for the linear growth of the resolvent of $H$ and, moreover, the analysis extends to an abstract operator theoretic  setting. 

Noting that the functions $m_\pm$ and $m_\pm^r$ are connected by (see \eqref{eq:m_conn})
\[
m_\pm^r(\lambda)=\lambda\, m_\pm(\lambda),\quad (\lambda\in\C_+),
\]
we see that \eqref{eq:i_crit} follows from \eqref{eq:i_lrg}. 
Therefore, {\em Volkmer's inequality \eqref{eq:vol} is valid if the operator $H$ associated with \eqref{eq:i_sp} satisfies the linear resolvent growth condition}. However, in the case of odd weights $r$, \eqref{eq:vol} becomes also sufficient for the Riesz basis property and hence we conclude that {\em the linear resolvent growth  implies the Riesz basis property for \eqref{eq:i_sp} if $r$ is odd} (see Theorem \ref{th:sim_odd}).

In conclusion, let us briefly outline the content of the paper.  Section \ref{sec:pre} is of preliminary character. It contains necessary notions and facts on differential expression and Weyl--Titchmarsh $m$-functions. In Section \ref{sec:help}, we give an overview of results on the HELP inequality \eqref{eq:iHELP+}. We present Everitt's and Bennewitz's criteria (Theorems \ref{th:ev71} and \ref{Bennewitz_HELP}, respectively) and also prove Theorem \ref{th:ak_crit+}, which states that \eqref{eq:i_crit+} is necessary and sufficient for the validity of \eqref{eq:iHELP+}. Sections \ref{sec:vol_I} and \ref{sec:vol_II} present two criteria for the validity of the Volkmer inequality \eqref{eq:vol}. In Section \ref{sec:cond} we apply the results from previous subsections and obtain some necessary and sufficient conditions in terms of weights. It is interesting to note that for scaled odd weights \eqref{eq:tilde_r} the inequality \eqref{eq:vol} is always valid and the constant $K$ in \eqref{eq:vol} is uniform and depends only on a scaling parameter $a$ (see Lemma \ref{lem:scaling}). In the final Section \ref{sec:LRG}, we  establish a connection between the inequality \eqref{eq:vol} and the linear resolvent growth condition (Theorem \ref{th:connection}). The latter allows us to extend the list of various criteria for the Riesz basis property of \eqref{eq:i_sp} in the case of odd weights (Theorem \ref{th:sim_odd}). 
In Appendix, we present some necessary facts from \cite{Ben89} on asymptotics of Weyl--Titchmarsh $m$-functions.

{\bf Notation.} $L^1(a,b)$ and $AC[a,b]$ are the sets of Lebesgue integrable and absolutely continuous functions on a compact interval $[a,b]$; if $r\in L^1(a,b)$ is positive, then $L^2_{r}(a,b)$ stands for the Hilbert space of equivalence classes with the norm $\|f\|=\big(\int_{(a,b)}|f|^2r(x)dx\big)^{1/2}$; $L^2(a,b):=L^2_{r}(a,b)$ if $r\equiv 1$.

$\N,\R,\C$ have the standard meanings; $\C_+$ is the open upper half-plane, $\C_+=\{\lambda\in \C:\, \im\, \lambda>0\}$; $\bar{\lambda}$ is the complex conjugate of $\lambda\in\C$; $\R_+:=[0,+\infty)$ and $\I \R_+=\{\I y:\ y\in\R_+\}$.

Prime $'$ denotes the derivative, $'\equiv \frac{d}{dx}$.

The notation '$(x\in X)$' is to be read as 'for all $x$ from the set $X$'.


\section{Preliminaries}\label{sec:pre}
\subsection{Differential operators}\label{ss:2.01}
Consider the following differential expressions
\be\label{eq:de}
\mathfrak{a}[f]:=-\big(\frac{1}{r}f'\big)',\qquad \ell[f]:=-\big(\frac{1}{|r|}f'\big)'.
\ee
In $L^2(-1,1)$, one associates with these expressions the following operators
\be\label{eq:oper}
Af=\ga[f],\quad f\in \dom(A);\qquad Lf=\ell[f],\quad f\in \dom(L),
\ee
where
\be\label{eq:dom_a}
\dom(A)=\{f\in L^2(-1,1):\ f,r^{-1}f'\in AC[-1,1],\ (r^{-1}f')(\pm 1)=0,\ \ga[f]\in L^2\},
\ee
\be\label{eq:dom_l}
\dom(L)=\{f\in L^2(-1,1):\ f,|r|^{-1}f'\in AC[-1,1], \ (|r|^{-1}f')(\pm 1)=0,\ \ell[f]\in L^2\}.
\ee
Consider also the minimal and maximal domains
\be\label{eq:dom_min}
\mathfrak{D}_{\min}=\{f\in \dom(L):\ f(0)=(|r|^{-1}f')(0)=0\},
\ee
and
\be\label{eq:dom_max}
\mathfrak{D}_{\max}=\{f\in L^2(-1,1):\ f, |r|^{-1}f'\in AC(\cI\setminus\{0\}),\ (|r|^{-1}f')(\pm 1)=0,\ \ell[f]\in L^2\}.
\ee
Define the operators
\be\label{eq:LA}
L_{\min}f=\ell[f], \quad A_{\min}f=a[f],\quad \dom(L_{\min})=\dom(A_{\min})=\mathfrak{D}_{\min},
\ee
and
\[
A_{\max}f=a[f], \quad A_{\max}f=a[f],\quad \dom(L_{\max})=\dom(A_{\max})=\mathfrak{D}_{\max}.
\]
Note that the operators $L_{\min}$ and $A_{\min}$ are symmetric, $n_\pm(L_{\min})=n_\pm(A_{\min})=2$, and
\[
L_{\min}^*=L_{\max},\qquad A_{\min}^*=A_{\max}.
\]
Moreover, 
\[
A_{\min}=JL_{\min},\quad A_{\max}=JL_{\max},
\]
where $J:f(x)\to (\sgn\, x) f(x)$.

\subsection{Weyl--Titchmarsh $m$-functions}\label{ss:WF}
Let $c(x,\lambda)$ and $s(x,\lambda)$ be the solutions of $\ell[y]=\lambda y$ satisfying the initial conditions
\be\label{eq:fs}
c(0,\lambda)=(|r|^{-1}s')(0,\lambda)=1,\quad s(0,\lambda)=(|r|^{-1}c')(0,\lambda)=0.
\ee
Define the Weyl solutions corresponding to the Neumann boundary conditions at $x=\pm 1$
\be\label{eq:weyl_s}
 \psi_\pm(x,\lambda)=s(x,\lambda)\mp m_\pm(\lambda)c(x,\lambda),\quad (|r|^{-1}\psi_\pm')(\pm 1,\lambda)=0,\qquad (\lambda\in\C_+).
\ee
The functions $m_+$ and $m_-$ are called the $m$-functions corresponding to $\ell$ on $[0,1]$ and $[-1,0]$, respectively. Notice that
\be\label{eq:psinorm}
\|\psi_\pm(x,\lambda)\chi_\pm(x)\|^2=\frac{\im\, m_\pm(\lambda)}{\im\, \lambda},\quad \lambda\in \C_+,
\ee
where $\chi_\pm$ is the indicator function of $\cI_\pm$, $\cI_+=(0,1)$ and $\cI_-=(-1,0)$. 
The latter means that $m_\pm$ is a Herglotz function. Moreover, the function $m_\pm$ admits the representation
\be\label{eq:m_repr}
m_\pm(\lambda)=\int_{\R_+}\frac{d\tau_\pm(s)}{s-\lambda},\quad \lambda\notin\R_+,
\ee
where the positive measures $d\tau_+$ and $d\tau_-$, called the spectral measures,  satisfy
\be\label{eq:m_repr2}
\int_{\R_+}\frac{d\tau_\pm(s)}{1+s}<\infty,\quad \int_{\R_+}d\tau_\pm(s)=\infty.
\ee
In particular, \eqref{eq:m_repr} means that $m_+$ and $m_-$ belong to the Krein--Stieltjes class $S$ (see \cite{KK1}).

Notice also that the functions $m_+$ and $m_-$ are meromorphic and both have a simple pole at $\lambda=0$. Indeed, the singularities of $m_+$ and $m_-$ are precisely the spectra of the problems
\begin{eqnarray*}
&-(|r|^{-1}y')'=\lambda \, y,\quad x\in \cI_\pm,\\
& (|r|^{-1}y')(0)=(|r|^{-1}y')(\pm 1)=0,
\end{eqnarray*}
and $\lambda =0$ is the eigenvalue with the eigenfunction $y_\pm(x,0)=c(x,0)\equiv 1$.

Further, note that the deficiency subspaces of $L_{\min}$ and $A_{\min}$ are given by
\be\label{eq:defect_L}
           \cN_\lambda(L_{\min})=\Span\{\psi_+(x,\lambda)\chi_+(x),\psi_-(x,\lambda)\chi_-(x)\}
\ee
 and
\be\label{eq:defect_A}
\cN_\lambda(A_{\min})=\Span\{\psi_+(x,\lambda)\chi_+(x),\psi_-(x,-\lambda)\chi_-(x)\},\quad \lambda\in\C\setminus\R.
\ee
Finally, by the von Neumann formula, the maximal domain admits the representation
\be\label{eq:neum}
\gD_{\max}=\gD_{\min}\dotplus\cN_{\lambda}\dotplus\cN_{\overline{\lambda}},\quad \lambda\in\C_+.
\ee


\section{The HELP inequality: the regular case}\label{sec:help}

Assume that $r\in L^1(0,1)$ is positive a.e. on $(0,1)$. 
Consider the following inequality
\be\label{eq:help}
\Big(\int_0^1\frac{1}{r}|f'|^2dx\Big)^2\le K^2\int_0^1|f|^2dx\, \int_0^1\big|\big(\frac{1}{r}f'\big)'\big|^2dx,\qquad (f\in\dom(A_+)),
\ee
where 
\be\label{eq:dom+}
\dom(A_+)=\{f\in L^2(0,1):\ f,r^{-1}f'\in AC[0,1],\ (r^{-1}f')(1)=0,\ (r^{-1}f')'\in L^2\}.
\ee
The inequality \eqref{eq:help} is said to be valid if there is $K>0$ such that \eqref{eq:help} holds for all $f\in\dom(A_+)$.

Let $m_+$ be the $m$-function defined by \eqref{eq:weyl_s}. The following criterion for the validity of \eqref{eq:help} was found by Everitt \cite{ev72} (see also \cite{ee82}, where the regular case was treated).
\begin{theorem}[Everitt]\label{th:ev71}
The inequality \eqref{eq:help} is valid if and only if there is $\theta\in(0,\frac{\pi}{2})$ such that
\be\label{eq:ev71}
-\im (\lambda^2\, m_+(\lambda))\ge 0,\quad (\lambda \in \Gamma_{\theta}),
\ee
where $\Gamma_{\theta}:=\{z\in\C_+: \frac{\re z}{|z|}\in [-\cos\theta,\cos\theta]\}$.

Moreover, the best possible $K$ in \eqref{eq:help} is given by 
\[
K=\frac{1}{\cos \theta_0},\quad \theta_0:=\inf \big\{\theta\in \big(0,\frac{\pi}{2}\big]:\ \eqref{eq:ev71}\, \text{is satisfied}\big\}.
\] 
\end{theorem}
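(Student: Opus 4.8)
The final statement to be proved is Everitt's criterion (Theorem~\ref{th:ev71}): the HELP inequality \eqref{eq:help} is valid iff there is $\theta\in(0,\pi/2)$ with $-\im(\lambda^2 m_+(\lambda))\ge 0$ on the sector $\Gamma_\theta$, and the best constant is $K=1/\cos\theta_0$.

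\bigskip

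The plan is to follow Everitt's original complex-analytic approach, which reduces the functional inequality to a positivity/geometric statement about the $m$-function. First I would set up the variational formulation: for $f\in\dom(A_+)$ let $g=\ga[f]=-(r^{-1}f')'\in L^2(0,1)$, so that $f$ is recovered (up to the homogeneous solution, i.e.\ constants annihilated by $\ga$, which drop out because of the boundary condition $(r^{-1}f')(1)=0$) by solving $\ga[f]=g$. The key observation is that for $f\in\dom(A_+)$ one has the Dirichlet-type identity
\[
\int_0^1 \frac{1}{r}|f'|^2\,dx = \big(\ga[f],f\big)_{L^2}=\big(g,f\big)_{L^2},
\]
obtained by integration by parts using $(r^{-1}f')(1)=0$ (no boundary term survives at $x=1$, and at $x=0$ one uses that the quadratic form is real). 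Hence, writing $I_0(f)=\int_0^1 r^{-1}|f'|^2$, $I_1(f)=\int_0^1|f|^2$, $I_2(f)=\int_0^1|g|^2$, the inequality \eqref{eq:help} reads $|(g,f)|^2\le K^2\,\|f\|^2\,\|g\|^2$ — but this is \emph{not} automatic because $f$ is not arbitrary; it is constrained to be the particular solution of $\ga[f]=g$ with the Neumann condition at $1$. The real content is the interplay between $f$ and $g$ at the singular endpoint $x=0$, where both $f(0)$ and $(r^{-1}f')(0)$ are free parameters.

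\bigskip

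Next I would localize the problem to the endpoint by a standard reduction: it suffices to prove the inequality for $f$ supported near $0$, because away from $0$ the operator is regular and the estimate is classical (this is where one restricts attention to the limit-point/regular dichotomy; here $x=1$ is regular, $x=0$ is regular too but carries the two free parameters). The crucial step is then to parametrize: for $\lambda=\I y$ on the imaginary axis (and more generally $\lambda\in\C_+$), the Weyl solution $\psi_+(x,\lambda)$ from \eqref{eq:weyl_s} satisfies $\ga[\psi_+]=\lambda\,\psi_+$ (recall $\ell=\ga$ on $(0,1)$ where $r>0$) and the Neumann condition at $x=1$, so $\psi_+\in\dom(A_+)$. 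For test functions built from $\psi_+$ one computes $I_0(\psi_+)=\lambda^{-1}I_0$-type quantities in terms of boundary values $\psi_+(0,\lambda)=-m_+(\lambda)$, $(r^{-1}\psi_+')(0,\lambda)=1$; using \eqref{eq:psinorm}, $\|\psi_+\chi_+\|^2=\im m_+(\lambda)/\im\lambda$, one gets explicit formulas for $I_0,I_1,I_2$ evaluated on $\psi_+$ in terms of $m_+(\lambda)$, $\lambda$, and $\lambda^2$. Plugging these into \eqref{eq:help} and optimizing over such test functions (linear combinations of $\psi_+$ and its "conjugate" partner spanning the deficiency space, cf.\ \eqref{eq:defect_L}, \eqref{eq:neum}) yields exactly the requirement that the Herglotz-type function $\lambda\mapsto \lambda^2 m_+(\lambda)$ stay in (the closure of) a half-plane / sector: the condition $-\im(\lambda^2 m_+(\lambda))\ge 0$ on $\Gamma_\theta$ is precisely the non-negativity of the relevant quadratic form.

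\bigskip

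For the converse — that the sector condition implies validity with $K=1/\cos\theta_0$ — the standard argument uses a Phragm\'en--Lindel\"of / Hardy-space device. Given $f\in\dom(A_+)$ with $g=\ga[f]$, one forms the function $F(\lambda)=(g,\psi_+(\cdot,\bar\lambda))$ or a suitable resolvent-type analytic function built from the Fourier transform of $f$ and $g$ with respect to the spectral measure $d\tau_+$ in \eqref{eq:m_repr}; using the representation \eqref{eq:m_repr} one shows the three integrals $I_0,I_1,I_2$ are, respectively, boundary integrals of $|F|^2$ weighted by $1$, by $\tau$-mass, and by $s^2$-mass, so that the HELP inequality becomes a weighted $L^2$-interpolation inequality of the form $\big(\int |F|^2 s\,d\tau\big)^2\le K^2\int|F|^2 d\tau\int |F|^2 s^2 d\tau$. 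The sector condition on $\lambda^2 m_+$ translates into an angular estimate on an associated analytic function in $\C_+$, and applying the maximum principle on the sector $\Gamma_\theta$ (the function in question is bounded and analytic there, with the right boundary behavior guaranteed by \eqref{eq:m_repr2}) gives the bound with constant $1/\cos\theta_0$; the equality cases come from the extremal functions for Phragm\'en--Lindel\"of on a sector.

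\bigskip

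The main obstacle I anticipate is the careful bookkeeping at the singular endpoint $x=0$: one must show that the quadratic form $(g,f)=\int r^{-1}|f'|^2$ extends continuously and that the boundary terms from integration by parts are controlled by $\dom(A_+)$-membership, so that the reduction to test functions $\psi_+(\cdot,\lambda)$ (and hence to the $m$-function) is lossless in both directions. Equivalently, the delicate point is proving that the supremum in the definition of the best constant is actually attained on the two-dimensional family of deficiency-space test functions — i.e.\ that nothing is lost by restricting from all of $\dom(A_+)$ to this family. This is exactly the step where Everitt's analysis invokes the limit-point nature at the relevant endpoint and the precise asymptotics of solutions; since the statement cited is Everitt's theorem \cite{ev72} (and \cite{ee82} for the regular case), I would at this point defer the endpoint analysis to those references rather than reprove it, and instead emphasize the structural equivalence: \eqref{eq:help} $\iff$ a quadratic-form positivity $\iff$ the sector condition \eqref{eq:ev71}, with the constant read off from the half-angle $\theta_0$.
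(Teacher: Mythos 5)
There is a genuine gap, on two counts. First, the identity you assert at the outset, $\int_0^1 r^{-1}|f'|^2\,dx=(\ga[f],f)_{L^2}$ for $f\in\dom(A_+)$, is false: integration by parts kills the term at $x=1$ thanks to $(r^{-1}f')(1)=0$, but it leaves the term $-(r^{-1}f')(0)\overline{f(0)}$ at $x=0$, and this term is precisely the nontrivial content of \eqref{eq:help}. On the Weyl solution $\psi_+(\cdot,\I y)$ it equals $\overline{m_+(\I y)}$, which is why the paper's computation (proof of Theorem \ref{th:ak_crit+}) gives $\int_0^1 r^{-1}|\psi_+'|^2\,dx=\re\,m_+(\I y)$ and not $(\ga[\psi_+],\psi_+)=\I\,\im\,m_+(\I y)$. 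If your identity held, Cauchy--Schwarz would yield \eqref{eq:help} with $K=1$ for every weight and Theorem \ref{th:ev71} would be vacuous; your subsequent remark that the inequality is ``not automatic because $f$ is constrained'' has the logic backwards (a constraint only shrinks the test class). Second, and more seriously, the sufficiency direction --- that the sector condition \eqref{eq:ev71} implies \eqref{eq:help} with the sharp constant $1/\cos\theta_0$ --- is only gestured at: the construction of the analytic function $F$, the identification of the three integrals with $\int|F|^2\,d\tau_+$, $\int|F|^2 s\,d\tau_+$, $\int|F|^2 s^2\,d\tau_+$, the angular estimate, and above all the claim that restricting to the deficiency-space family is lossless, are all asserted and then explicitly deferred to \cite{ev72, ee82}. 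That deferred step \emph{is} the theorem, so as a proof the proposal is incomplete.

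For the record, the paper itself quotes Theorem \ref{th:ev71} from \cite{ev72, ee82} without proof, but Section \ref{sec:vol_I} contains all the ingredients of a complete operator-theoretic proof in this regular setting, and it is a cleaner route than the Hardy-space/Phragm\'en--Lindel\"of one you sketch: validity of the inequality with constant $K$ is equivalent to nonnegativity of the quadratic form $J_\lambda$ of \eqref{eq:J} on the rays $\re\lambda/|\lambda|=\pm 1/K$ (Lemma \ref{lem:3.1}, by completing the square in \eqref{eq:j_lam}); $J_\lambda$ is automatically nonnegative on the minimal domain and the cross terms vanish, so everything reduces to the two-dimensional space $\cN_\lambda\dotplus\cN_{\overline{\lambda}}$ (Lemma \ref{lem:j_lam}); and there $J_\lambda\ge 0$ is equivalent to $M_+(\lambda)\ge 0$, whose determinant is $-\im\,m_+(\lambda)\,\im(\lambda^2 m_+(\lambda))$ (Lemma \ref{lem:dmax} and the corollary following it). Carrying out your argument along these lines would simultaneously close the lossless-reduction gap and deliver the sharp constant $K=1/\cos\theta_0$.
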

Bennewitz in \cite{Ben87} found necessary and sufficient condition on the coefficient $r$ such that \eqref{eq:help} is valid. 
\begin{definition}
Let $r\in L^1(0,1)$ be positive a.e. on $(0,1)$. We say that $r$ satisfies Bennewit's condition at $0$  if there is $t\in (0,1)$ such that 
\be\label{Parfenov}
S_0(t):=\limsup_{x\to 0}S(t,x)\neq 1,\quad \text{where} \quad 
S(t,x):=\frac{\int_{[0, tx]}r(\zeta)d\zeta}{\int_{[0, x]}r(\zeta)d\zeta}.
\ee 
\end{definition}
\begin{theorem}[Bennewitz]\label{Bennewitz_HELP}
The inequality \eqref{eq:help} is valid if and only if $r$ satisfies Bennewitz's condition.
\end{theorem}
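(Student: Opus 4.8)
The plan is to deduce the theorem from Everitt's criterion (Theorem~\ref{th:ev71}) by analysing the behaviour of the $m$-function $m_+(\lambda)$ as $\lambda\to\infty$ in $\C_+$; this is the strategy of \cite{Ben87}, the required asymptotics of $m$-functions being those recalled in the Appendix following \cite{Ben89}. Since $m_+$ is a Herglotz function it has no zeros in $\C_+$, so $g(\lambda):=\lambda^2m_+(\lambda)$ is zero-free there and $\arg g=2\arg\lambda+\arg m_+$ has a continuous branch on $\C_+$; Everitt's condition ``$-\im g\ge 0$ on $\Gamma_\theta$'' then reads $\arg g(\lambda)\in[\pi,2\pi]$ for $\theta\le\arg\lambda\le\pi-\theta$, i.e.\ $\pi-2\arg\lambda\le\arg m_+(\lambda)\le 2\pi-2\arg\lambda$ there. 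On the imaginary semi-axis $\arg g(\I y)=\pi+\arg m_+(\I y)\in(\pi,2\pi)$ automatically (as $\im m_+(\I y)>0$), and near $\lambda=0$, using the simple pole of $m_+$ at the origin, $g(\lambda)=-a_0\lambda+O(\lambda^2)$ with $a_0>0$, so $\arg g(\lambda)=\pi+\arg\lambda+o(1)$, which stays strictly inside $(\pi,2\pi)$ on every $\Gamma_\theta$. Hence for bounded $|\lambda|$ the inequality $-\im g\ge0$ holds on a whole sector about $\I\R_+$ by continuity, and the only possible obstruction to Everitt's condition is at $|\lambda|\to\infty$; moreover the upper bound $\arg m_+\le 2\pi-2\arg\lambda$ is easily checked for $\arg\lambda$ near $\tfrac{\pi}{2}$ (for $\arg\lambda<\tfrac{\pi}{2}$ it is automatic, and for $\arg\lambda>\tfrac{\pi}{2}$ one has $\re\,m_+>0$, so $\arg m_+<\tfrac{\pi}{2}$), so \eqref{eq:help} is valid if and only if there is $\varepsilon>0$ with $\arg m_+(\lambda)\ge\pi-2\arg\lambda$ for all large $|\lambda|$ with $|\arg\lambda-\tfrac{\pi}{2}|<\varepsilon$.

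The behaviour of $\arg m_+(\lambda)$ as $\lambda\to\infty$ is local at $x=0$, and I would extract it by renormalisation. Rescaling $x\mapsto\delta x$ with $\delta=\delta(\lambda)\to0$ chosen so that the rescaled equation carries a spectral parameter of order one, the behaviour of $m_+(\lambda)$ becomes governed by the normalised primitives $R_\delta(t):=R(\delta t)/R(\delta)$, where $R(x):=\int_{[0,x]}r(\zeta)\,d\zeta$; along a sequence $\lambda_n\to\infty$ I would use the monotonicity of $R_{\delta_n}$ together with $R_{\delta_n}(1)=1$ and a Helly selection to pass to a subsequential limit $G$, which produces a limiting (``model'') Herglotz function $m_G$ that $m_+(\lambda_n)$ approaches, after rescaling, at an argument of order one; varying the constant in $\delta(\lambda)$ shows the relevant quantity is $\arg m_G(\rho e^{\I\phi})$ for all $\rho>0$ and all $\phi$ in the sector. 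Thus \eqref{eq:help} holds iff, for $\phi$ close to $\tfrac{\pi}{2}$ and a single $\varepsilon$, every model $G$ in the family $\mathcal M$ of all subsequential limits of $R_\delta$ as $\delta\to0$ satisfies the sectorial bound $\arg m_G(\rho e^{\I\phi})\ge\pi-2\phi$ for all $\rho>0$.

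It remains to match the failure of this bound with the weight. A model $G$ violates the bound precisely when it is \emph{degenerate}, all of its mass having escaped to the endpoint ($G\equiv1$ on $(0,1]$): then $m_G$ collapses to a constant (equivalently $\arg m_+(\lambda_n)\to0$ along the corresponding sequence), whereas a non-degenerate $G$ (carrying mass at positive $t$) yields a genuine Krein string for which $\arg m_G(\rho e^{\I\phi})$ is bounded away from $0$ for $\phi$ near $\tfrac{\pi}{2}$, uniformly over the compact non-degenerate part of $\mathcal M$. Now $\mathcal M$ is compact and connected (the $\omega$-limit set of the continuous curve $\delta\mapsto R_\delta$), and $\sup_{G\in\mathcal M}G(t)=S_0(t)$ for each $t$; hence the presence of a degenerate model in $\mathcal M$ is equivalent --- through compactness of $\mathcal M$ and the monotonicity of $t\mapsto S_0(t)$ --- to $S_0(t)=1$ for every $t\in(0,1)$, i.e.\ to the failure of Bennewitz's condition. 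Together with Theorem~\ref{th:ev71} this yields ``\eqref{eq:help} valid $\iff$ Bennewitz's condition''. As a sanity check, for $r(x)\asymp x^\alpha$ one has $S(t,x)\equiv t^{\alpha+1}\ne1$, $\mathcal M$ consists only of the self-similar weight, $m_G(\lambda)\asymp(-\lambda)^{-\gamma}$ with $\gamma=\gamma(\alpha)\in(0,1)$, so $\arg m_G(\rho e^{\I\phi})\equiv\gamma(\pi-\phi)$ and the sectorial bound becomes $\gamma(\pi-\phi)\ge\pi-2\phi$, which holds once $\phi$ is close enough to $\tfrac{\pi}{2}$.

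The main obstacle is twofold. First, the renormalisation must be made rigorous for merely integrable weights: one must pin down the normalisation $\delta(\lambda)$, prove that the rescaled Weyl solutions (equivalently transfer matrices, or spectral measures) converge locally uniformly to those of the model, and --- the genuinely delicate point --- control the remainder uniformly as $\arg\lambda$ runs over a whole neighbourhood of $\tfrac{\pi}{2}$, not merely along $\I\R_+$; these are exactly the quantitative $m$-function asymptotics of \cite{Ben89} recorded in the Appendix. Second, one must analyse the degenerate Krein strings and establish the equivalence ``degenerate model in $\mathcal M$'' $\Leftrightarrow$ ``$S_0(t)=1$ for all $t$'', which needs a compactness/selection argument for $\mathcal M$ together with the monotonicity of $S_0$; granting these, the remainder is routine bookkeeping with the model $m$-functions.
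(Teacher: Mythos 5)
The paper does not actually prove this theorem: it is quoted from Bennewitz \cite{Ben87}, with Lemma~\ref{lem:Bennewitz} cited as the key ingredient and the supporting renormalisation machinery recorded in Appendix~\ref{ss:a02}. Your reconstruction follows exactly that route --- Everitt's criterion (Theorem~\ref{th:ev71}) reduced to a sectorial lower bound on $\arg m_+$ at infinity, then the rescaling $P_s(x)=R(sx)/R(s)$, Helly selection, limit systems of the form \eqref{eq:a04}, and the identification of the degenerate model $dP_\infty=a\delta$ (Example~\ref{ex:a01}) with the failure of \eqref{Parfenov} --- and it is the same mechanism the paper itself deploys in Lemmas~\ref{lem:m_assympt} and~\ref{lem:5.4}. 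So the outline is sound and consistent with the cited source. Two points of precision: first, the conclusion for a non-degenerate limit should be phrased in terms of the Weyl \emph{disc} of the limit system at $\mu$ (the limit only localises $m_+(\rho_k\mu)/f(\rho_k)$ into that disc, it does not produce a single well-defined $m_G$), and the dichotomy is ``the Weyl circle at $\I$ contains a real point iff $dP_\infty=a\delta$''; second, the ``compact non-degenerate part of $\mathcal M$'' is not compact in general --- what saves the argument is that when Bennewitz's condition holds, $\mathcal M$ contains \emph{no} degenerate element, so all of the (compact) set $\mathcal M$ is non-degenerate and the uniform lower bound on $\arg$ over the Weyl discs follows from continuity of $G\mapsto U(\cdot,\lambda)$ under weak-$*$ convergence of $dG$. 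With those adjustments, and granting the uniform asymptotics of \cite[Lemma 4.7]{Ben89} that you explicitly invoke, the argument closes.
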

The proof of Theorem \ref{Bennewitz_HELP} is based on the following result for $m$-functions.
\begin{lemma}[\cite{Ben87}]\label{lem:Bennewitz}
Let $m_+$ be the m-function defined by \eqref{eq:weyl_s}.
Then
\be\label{eq:m_ass}
|m_+(\lambda)|=O\big(\im\, m_+(\lambda)\big) \quad \text{as}\quad \lambda \to \infty
\ee
 in any nonreal sector (a sector non intersecting the real axis) if $r$ satisfies Bennewitz's condition \eqref{Parfenov}.
%
\end{lemma}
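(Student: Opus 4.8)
The plan is to analyse the asymptotic behaviour of $m_+$ along rays in a nonreal sector using the rescaling/sampling technique that relates the local behaviour of the weight $r$ near $0$ to the large-$\lambda$ behaviour of the $m$-function, as developed in \cite{Ben89} (see the Appendix). Recall that $m_+(\lambda)$ for large $|\lambda|$ is governed by the values of $r$ on a small interval $[0,\varepsilon]$ with $\varepsilon \sim |\lambda|^{-1/2}$ in an appropriate sense; more precisely, after the substitution $\zeta = x/\varepsilon$ and a suitable normalisation, $\lambda^{-1/2} m_+(\lambda)$ is asymptotically the $m$-function $m_\varepsilon(1)$ (value at the fixed point $1$ in the rescaled variable) of the rescaled problem with weight $r_\varepsilon(\zeta) = \varepsilon\, r(\varepsilon\zeta)/R(\varepsilon)$, where $R(x) := \int_{[0,x]} r(\zeta)\,d\zeta$. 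The point of Bennewitz's condition \eqref{Parfenov} is exactly that it forbids the rescaled weights $r_\varepsilon$ from converging (along subsequences $\varepsilon \to 0$) to a weight that would force $m_\varepsilon(1)$ to approach the real axis.

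First I would fix a nonreal sector $\Sigma$ and suppose, for contradiction, that \eqref{eq:m_ass} fails: there is a sequence $\lambda_n \in \Sigma$, $\lambda_n \to \infty$, with $\im\, m_+(\lambda_n) = o(|m_+(\lambda_n)|)$. Writing $\lambda_n = |\lambda_n| \E^{\I \phi_n}$ with $\phi_n$ bounded away from $0$ and $\pi$, and setting $\varepsilon_n := |\lambda_n|^{-1/2}$, the normal-families argument from \cite{Ben89} lets me pass to a subsequence along which the rescaled weights $r_{\varepsilon_n}$ converge (in the relevant weak sense) to some limit weight $\rho$, and correspondingly $\varepsilon_n\, m_+(\lambda_n) \to \widetilde m(\E^{\I\phi})$, where $\widetilde m$ is the $m$-function of the limiting Krein string/Sturm--Liouville problem with weight $\rho$ on $(0,\infty)$ and $\phi := \lim \phi_n$. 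Since $\widetilde m$ is a Herglotz function and the limit point $\E^{\I\phi}$ lies strictly inside $\C_+$, we would get $\im\, \widetilde m(\E^{\I\phi}) = 0$ only if $\widetilde m$ is constant — but $\widetilde m$ cannot be constant for a nontrivial limiting weight, and the only way the limiting weight degenerates is ruled out precisely by $S_0(t) \ne 1$: Bennewitz's condition guarantees that the masses $R(\varepsilon_n t)/R(\varepsilon_n)$ do not all tend to $1$ (or, depending on the normalisation, that the rescaled string does not collapse to a point mass at the origin), so $\rho$ supports a genuine interval of positive mass and $\widetilde m$ is a nonconstant Herglotz function, whence $\im\,\widetilde m(\E^{\I\phi}) > 0$. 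This contradicts $\im\, m_+(\lambda_n) = o(|m_+(\lambda_n)|)$ and proves \eqref{eq:m_ass}.

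The main obstacle is the compactness/convergence step: making precise the sense in which $r_{\varepsilon_n} \to \rho$ and in which this entails $\varepsilon_n m_+(\lambda_n) \to \widetilde m(\E^{\I\phi})$ uniformly for $\phi$ in compact subsets of $(0,\pi)$. This requires the continuous dependence of the $m$-function of a regular (or limit-circle-at-infinity) Sturm--Liouville problem on the coefficient in the appropriate topology, together with a priori bounds ensuring the family $\{\varepsilon_n m_+(\lambda_n)\}$ stays in a normal family on $\Sigma$ — bounds that come from the Herglotz representation \eqref{eq:m_repr} and the sandwiching of $R(\varepsilon_n x)$ between its values at the endpoints. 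All of this machinery is exactly what is assembled in \cite{Ben89}; I would quote the relevant statements from the Appendix rather than reprove them, and the novel content of the lemma is simply the observation that $S_0(t)\neq 1$ is precisely the nondegeneracy hypothesis needed to run the argument. A secondary (routine) point is bookkeeping the relation between the two common normalisations of the rescaled problem (rescaling the independent variable by $\varepsilon$ versus rescaling the mass by $R(\varepsilon)$), which only affects the explicit constant in front of $m_+$ and not the qualitative conclusion $|m_+| = O(\im\, m_+)$.
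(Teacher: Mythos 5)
Your overall strategy is the right one: this is exactly the rescaling/Helly/Weyl-circle machinery of \cite{Ben89} that the paper collects in the Appendix, and note that the paper does not reprove Lemma \ref{lem:Bennewitz} at all (it is quoted from \cite{Ben87}); the closest in-paper analogue of your argument is step (ii) of the proof of Lemma \ref{lem:5.4}, which runs the same contradiction scheme. However, two of your concrete steps are not correct as stated. First, the scaling: for a general $r\in L^1$ the relevant length scale is \emph{not} $\varepsilon\sim|\lambda|^{-1/2}$, and the discrepancy is not ``only an explicit constant in front of $m_+$''. The correct coupling ties $\varepsilon$ to the weight through $R$ (in the Appendix's notation $u=1/(\rho f(\rho))$ with $f$ built from $R^{-1}$, and the $m$-values are normalized by $f(\rho)$); e.g.\ for $r(x)\sim x^\alpha$ the right scale is $\varepsilon\sim|\lambda|^{-1/(2+\alpha)}$. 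With your choice $\varepsilon=|\lambda|^{-1/2}$ and unit-mass normalization, the rescaled spectral parameter $\lambda\,\varepsilon R(\varepsilon)$ need not stay in a compact subset of $\C_+$, and the passage to a limit problem breaks down. Since you say you would quote the statements of \cite{Ben89}, this is repairable, but the claim that the normalization is routine bookkeeping is a genuine misstatement. Relatedly, the limit statement is not ``$\varepsilon_n m_+(\lambda_n)\to\widetilde m(\E^{\I\phi})$'': the normalized values are only asymptotically in the Weyl \emph{disc} of the limit system, so the correct dichotomy is not ``$\im\widetilde m=0$ forces $\widetilde m$ constant'' but rather ``a real point in the Weyl disc at an interior $\lambda$ forces $dP_\infty=a\,\delta$'' (Example \ref{ex:a01}); indeed for the point-mass string $\widetilde m(\lambda)=a-1/\lambda$ is nonconstant while its Weyl circle touches $\R$.

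Second, the degeneracy-versus-Bennewitz step has a missing case. A real limit point forces $dP_\infty=a\,\delta$ with $a\in(0,1]$, i.e.\ $R(u_n x)/R(u_n)\to a$ for all $x\in(0,1)$ along the subsequence; your phrase ``the masses $R(\varepsilon_n t)/R(\varepsilon_n)$ do not all tend to $1$'' only rules out $a=1$, and the assertion that the limit weight must then ``support a genuine interval of positive mass'' is neither needed nor true. To finish you must show that \emph{any} such degenerate limit, also with $a<1$, contradicts Bennewitz's condition, i.e.\ forces $S_0(t)=1$ for every $t$. This is a short extra argument you have not supplied: for fixed $t,s'\in(0,1)$ evaluate along $x_n=u_n s'$ and use
\[
S(t,x_n)=\frac{R(u_n\,t s')}{R(u_n s')}=\frac{P_n(ts')}{P_n(s')}\longrightarrow \frac{a}{a}=1,
\]
so $\limsup_{x\to0}S(t,x)=1$ for all $t$, contradicting \eqref{Parfenov}. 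With the correct $R$-coupled scaling, the Weyl-disc formulation of the limit step, and this extra observation, your proof closes; as written, these are genuine gaps rather than bookkeeping.
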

As we shall show below, the converse statement is also true (see Corollary \ref{cor:ben}).

Everitt's criterion for the validity of \eqref{eq:help} requires the knowledge of asymptotic behavior of the corresponding $m$-function $m_+$ at least in some sector of $\C_+$, which contains the imaginary semi-axis $\I\R_+$. Our main aim is to show that it suffices to know only the behavior of $m_+$ along the ray $\I\R_+$.
 \begin{theorem}\label{th:ak_crit+}
 Let $m_+$ be the $m$-function defined by \eqref{eq:weyl_s}. Then the inequality \eqref{eq:help} is valid if and only if
 \be\label{eq:crit+}
 \sup_{y>0}\frac{\re\, m_+(\I y)}{\im\, m_+(\I y)}<\infty.
 \ee
 \end{theorem}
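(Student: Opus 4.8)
The plan is to deduce Theorem~\ref{th:ak_crit+} from Everitt's criterion (Theorem~\ref{th:ev71}) by translating the boundedness condition \eqref{eq:crit+} along $\I\R_+$ into the existence of a sector $\Gamma_\theta$ on which $-\im(\lambda^2 m_+(\lambda))\ge 0$. First I would observe that since $m_+$ belongs to the Krein--Stieltjes class $S$, it has the representation \eqref{eq:m_repr}--\eqref{eq:m_repr2}, so $\lambda\mapsto -\lambda m_+(\lambda)$ is again a Herglotz (or anti-Herglotz, depending on signs) function, and more importantly $-\lambda^2 m_+(\lambda)$ has controlled argument behaviour. The key analytic fact I would use is that for a function in class $S$, the quantity $\re m_+(\lambda)/\im m_+(\lambda)$ being bounded along the imaginary axis forces, via the Herglotz representation and a normal-families / subordination argument, that $\arg m_+(\I y)$ stays away from $0$ and from $\pi$ uniformly in $y>0$; equivalently $m_+(\I y)$ stays in a fixed subsector of $\C_+$.

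The heart of the argument is then a Phragmén--Lindelöf / harmonic-majorant step: the function $g(\lambda):=\lambda^2 m_+(\lambda)$ is meromorphic, and one wants to control $\arg g$ throughout a sector $\Gamma_\theta$ from its behaviour on the single ray $\I\R_+$ together with the a priori fact that $\arg m_+\in(0,\pi)$ everywhere in $\C_+$ (Herglotz) so that $\arg g=2\arg\lambda+\arg m_+$ has a one-sided bound $\arg g<2\arg\lambda+\pi$. The bound \eqref{eq:crit+} says that on the ray $\arg\lambda=\pi/2$ we have $\arg g\le \pi + \arctan(\sup \re m_+(\I y)/\im m_+(\I y)) <\tfrac{3\pi}{2}$, i.e. $\im g \le 0$ may fail but $\arg g$ is bounded away from the bad direction. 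Using that $\arg(\lambda m_+(\lambda))$ is a bounded harmonic function on $\C_+$ minus the spectrum (bounded because $m_+\in S$ means $\lambda m_+(\lambda)$ maps $\C_+$ into a half-plane rotated by at most $\pi$), its boundary values on $\R_-$ and the real axis are controlled, and one gets that the sup of $\re m_+/\im m_+$ over the whole sector $\I\R_+\cup$(rays near it) is finite, which then yields \eqref{eq:ev71} for a suitable $\theta$. This is essentially the mechanism behind Lemma~\ref{lem:Bennewitz} and its promised converse Corollary~\ref{cor:ben}: \eqref{eq:crit+} $\iff$ $|m_+(\lambda)|=O(\im m_+(\lambda))$ in nonreal sectors $\iff$ Everitt's sector condition.

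For the converse direction of the theorem, if \eqref{eq:help} is valid then by Everitt's criterion there is $\theta<\pi/2$ with $-\im(\lambda^2 m_+(\lambda))\ge 0$ on $\Gamma_\theta$; specializing to $\lambda=\I y$ gives $\re(y^2 m_+(\I y))\le 0$? — no, one must be careful with the sign: $-\im((\I y)^2 m_+)=-\im(-y^2 m_+)=y^2\im m_+\ge0$ automatically, so the ray itself gives nothing, and instead I would take $\lambda=|z|\E^{\I\phi}$ with $\phi$ near but not equal to $\pi/2$, write out $-\im(\lambda^2 m_+)=-|z|^2\im(\E^{2\I\phi}m_+(\lambda))\ge0$, and let $\phi\to\pi/2$ to conclude $\re m_+(\I y)\le (\cot\theta)\,\im m_+(\I y)$ up to the usual continuity/limit argument, giving \eqref{eq:crit+} with $\sup\le \cot\theta_0$.

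The main obstacle I anticipate is the forward (sufficiency) direction: passing from boundedness of $\re m_+/\im m_+$ on the single ray $\I\R_+$ to boundedness on a full sector. On an arbitrary Herglotz function this is false, so one genuinely needs to exploit that $m_+\in S$, i.e. that $\lambda m_+(\lambda)$ is itself (anti-)Herglotz-like with boundary spectrum on $\R_+$ only; the clean way is to apply a conformal map sending $\C_+\setminus\{$spectrum$\}$ appropriately and invoke a Phragmén--Lindelöf theorem for the bounded harmonic function $\arg(\lambda m_+(\lambda))-\arg\lambda$, whose boundary values vanish (or are $\le0$) on $(-\infty,0)$ and are controlled by \eqref{eq:crit+} on $\I\R_+$. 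Nailing down that this harmonic function is indeed bounded and has the stated one-sided boundary behaviour — in particular handling the simple pole of $m_+$ at $\lambda=0$ and the accumulation of spectrum at $+\infty$ — is the delicate point, and I would lean on the precise asymptotic results for $m$-functions recorded in the Appendix (from \cite{Ben89}) to justify the boundary estimates near $0$ and near $\infty$.
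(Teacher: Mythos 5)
Your argument for the direction ``\eqref{eq:help} valid $\Rightarrow$ \eqref{eq:crit+}'' does not close. Writing $\lambda=\rho\E^{\I\phi}$, Everitt's condition $-\im(\lambda^2m_+(\lambda))\ge 0$ on the ray $\arg\lambda=\phi$ says exactly $\arg m_+(\lambda)\ge \pi-2\phi$, i.e. $\re m_+/\im m_+\le \cot(\pi-2\phi)$ on that ray; as $\phi\to\pi/2$ this bound blows up, so ``letting $\phi\to\pi/2$ by continuity'' yields only the trivial statement $\arg m_+(\I y)\ge 0$ --- exactly the degeneration you yourself observed when noting that the ray $\I\R_+$ ``gives nothing''. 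A bound on the imaginary axis does not follow from the sector condition by a limit; it needs either a genuine propagation step (e.g.\ Harnack's inequality for the positive bounded harmonic function $\arg m_+$, comparing $\I\rho$ with $\rho\E^{\I\theta}$, which lie at bounded hyperbolic distance), or, much more simply, the paper's route: $\psi_+(\cdot,\I y)\in\dom(A_+)$, and by \eqref{eq:psinorm} and Green's formula the three integrals in \eqref{eq:help} evaluate to $\tfrac1y\im m_+(\I y)$, $y\,\im m_+(\I y)$ and $\re m_+(\I y)$, so testing \eqref{eq:help} on the Weyl solutions gives $\re m_+(\I y)\le K\,\im m_+(\I y)$ at once (and the lower bound $K_0$ for the best constant; your claimed constant $\cot\theta_0$ does not follow).

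For the hard direction your strategy is genuinely different from the paper's, and it can be completed, but the decisive step is only asserted. The paper argues by contraposition through Bennewitz's criterion: if \eqref{eq:help} fails then $S_0\equiv 1$, Lemma \ref{lem:m_assympt} (which combines Theorems \ref{th:ev71} and \ref{Bennewitz_HELP}) produces $\lambda_j=(k_j+\I)y_j$ with $k_j\to 0$, $y_j\to\infty$, $\arg m_+(\lambda_j)\to 0$, and the Stieltjes-representation estimates \eqref{eq:5.07}--\eqref{eq:5.08} transfer this to the imaginary axis, violating \eqref{eq:crit+}. Your plan --- propagate $\arg m_+(\I y)\ge\delta$ from the ray to a sector and then invoke Everitt --- bypasses Bennewitz entirely, and a concrete way to finish it is a two-constants/harmonic-measure argument: $u=\arg m_+$ is harmonic with values in $(0,\pi)$ on all of $\C_+$ (not merely off the spectrum), $u\ge\delta$ and $u\le\pi/2$ on $\I\R_+$ (the latter since $\re m_+(\I y)>0$ by \eqref{eq:m_repr}); the harmonic measure of $\I\R_+$ in the quarter-plane $\{0<\arg z<\pi/2\}$ at $\rho\E^{\I\phi}$ equals $2\phi/\pi$, giving $u\ge 2\delta\phi/\pi$ there, and symmetrically $\pi-u\ge\pi-\phi$ in the left quarter-plane, so choosing $\theta$ close enough to $\pi/2$ these bounds dominate $\pi-2\phi$ and $2\phi-\pi$, which is precisely \eqref{eq:ev71} on $\Gamma_\theta$. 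What your write-up lacks is this quantitative propagation; moreover the worries about the pole at $0$, the spectrum accumulating at $+\infty$, and the appeal to the Appendix asymptotics are misplaced --- since $\arg m_+$ is globally bounded, the Phragm\'en--Lindel\"of maximum principle with the two corner points as exceptional set suffices, whereas the Appendix material is used in the paper for Lemma \ref{lem:5.4}, not for this theorem. If these points are filled in, your route gives an alternative, purely function-theoretic proof; as written, both directions have gaps, and the necessity step is incorrect as stated.
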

 
 Before proving Theorem \ref{th:ak_crit+} we need the following result.
 \begin{lemma}\label{lem:m_assympt}
Assume that $S_0\equiv 1$ on $(0,1)$. Then there is a sequence $\{\lambda_j\}\subset\C_+$ such that 
\be\label{eq:lam_01}
\lambda_j=(k_j+\I)y_j, \quad y_j\to+\infty,\quad k_j\to +0, 
\ee
and $\arg m_+(\lambda_j)=o(1)$ as $ j\to+\infty$, i.e.
\be\label{eq:lam_02}
\frac{\im\, m_+(\lambda_j)}{\re\, m_+(\lambda_j)}=o(1),\quad j\to\infty.
\ee
\end{lemma}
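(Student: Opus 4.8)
\medskip

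The plan is to exploit the hypothesis $S_0\equiv 1$ on $(0,1)$ together with the known asymptotic formulas for Weyl--Titchmarsh $m$-functions collected in the Appendix (the results of \cite{Ben89}). First I would unpack what $S_0\equiv 1$ means: for every $t\in(0,1)$ the ratio $S(t,x)=\int_{[0,tx]}r/\int_{[0,x]}r$ tends to $1$ as $x\to 0$. Equivalently, writing $R(x):=\int_{[0,x]}r(\zeta)d\zeta$, the nondecreasing function $R$ is \emph{slowly varying} at $0$ in the sense of Karamata (after the change of variable $x\mapsto 1/x$ this becomes slow variation at $\infty$). This is precisely the degenerate case in Bennewitz's classification, and I expect the Appendix to provide, in this regime, an asymptotic description of $m_+(\lambda)$ as $\lambda\to\infty$ in $\C_+$ of the form $m_+(\lambda)\sim c(\lambda)\,\big(\text{something real and positive}\big)$ along suitable rays, where the ``phase'' of $m_+$ degenerates. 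Concretely, one should be able to locate, for the solution $\psi_+$ and its quasi-derivative, an integral/asymptotic representation whose leading behaviour on a ray $\lambda=(k+\I)y$ is governed by $R$ evaluated at a scale comparable to $1/\sqrt{|\lambda|}$; since $R$ is slowly varying, the leading term has argument $o(1)$ once $k\to 0$.

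\medskip

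The concrete construction of the sequence $\{\lambda_j\}$ would then go as follows. For a fixed large $y>0$ and a small parameter $k>0$, set $\lambda=(k+\I)y$. Using the $m$-function asymptotics from the Appendix on this ray, I would write $\arg m_+(\lambda)=\varepsilon(k,y)$ and show that $\varepsilon(k,y)\to 0$ as first $y\to\infty$ (with $k$ fixed) and then $k\to 0$ — i.e. the slow variation of $R$ forces the correction to the real leading term to be $o(1)$ uniformly in the relevant range. Then a standard diagonal extraction produces $k_j\to +0$ and $y_j\to+\infty$ with $\arg m_+((k_j+\I)y_j)\to 0$, which is exactly \eqref{eq:lam_01}–\eqref{eq:lam_02} after noting that $\arg m_+(\lambda_j)\to 0$ is equivalent to $\im m_+(\lambda_j)/\re m_+(\lambda_j)=o(1)$ (here $\re m_+>0$ eventually because $m_+\in S$ forces $m_+$ to be real and positive just below the positive real axis, and slow variation pushes the ray behaviour toward that of the positive axis). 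Positivity of $\re m_+(\lambda_j)$ along the chosen ray should itself fall out of the Herglotz representation \eqref{eq:m_repr}: $\re m_+(\lambda)=\int_{\R_+}\frac{(s-\re\lambda)\,d\tau_+(s)}{|s-\lambda|^2}$, which is positive once $\re\lambda$ is small relative to the ``center of mass'' of $d\tau_+$ at scale $|\lambda|$, and this is controlled by the same asymptotics.

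\medskip

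The main obstacle, I expect, is making the asymptotic analysis of $m_+$ on the \emph{tilted} rays $\lambda=(k+\I)y$ rigorous and uniform enough to permit the two successive limits. Bennewitz's asymptotic results are typically stated along the imaginary axis or in fixed nonreal sectors; here one needs behaviour as the ray is allowed to approach the positive real axis ($k\to 0$), which is the boundary of the domain where the asymptotics hold, so error terms must be tracked carefully. The key technical input will be a quantitative version of ``$R$ slowly varying at $0$ $\Rightarrow$ $m_+$ has asymptotically real, positive leading term'': by Karamata's uniform convergence theorem the convergence $S(t,x)\to 1$ is uniform for $t$ in compact subsets of $(0,1)$, and this uniformity is what lets me first send $y\to\infty$ and then $k\to 0$ without the error blowing up. Once that uniform estimate is in hand, the diagonal argument is routine. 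A secondary, smaller point is to check that the constructed $\lambda_j$ avoid the poles of $m_+$ (the eigenvalues of the Neumann--Neumann problem on $(0,1)$), which is automatic since those poles lie on $\R$ while $\im\lambda_j=y_j>0$.
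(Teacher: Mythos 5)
Your plan is built on a misreading of the hypothesis, and this is a genuine gap. In \eqref{Parfenov} the quantity $S_0(t)$ is a $\limsup$, so ``$S_0\equiv 1$'' only says that for each fixed $t$ there is \emph{some} sequence $x_n(t)\to 0$ with $S(t,x_n)\to 1$; it does not say $S(t,x)\to 1$ as $x\to 0$, i.e.\ it does not make $R(x)=\int_0^x r$ slowly varying at $0$. Consequently Karamata's uniform convergence theorem — which you name as the key technical input — is not available, and the step ``$\arg m_+((k+\I)y)\to 0$ as $y\to\infty$ for each fixed $k$, then let $k\to0$'' is unjustified: the hypothesis can at best produce a single sequence of scales $s_j\to 0$ along which the rescaled profiles $P_{s_j}(x)=R(s_jx)/R(s_j)$ converge to the degenerate profile $\chi_{(0,1]}$ (this extraction, using that $S(t,x)$ is nondecreasing in $t$ plus a diagonal argument, or a Helly-type argument as in the Appendix and in Lemma \ref{lem:5.4}, is itself a missing ingredient of your write-up). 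Moreover, even along such a sequence the mechanism is not ``slow variation makes the leading term real'': by the Appendix, $m_+(\rho_j\mu)/f_+(\rho_j)$ is asymptotically confined to the disk with centre $1+\tfrac{\I}{2\im\mu}$ and radius $\tfrac{1}{2\im\mu}$, and for fixed $\mu$ this disk contains the point $1+\I$, whose argument is $\pi/4$ — nothing forces $\arg m_+$ to be small along a fixed ray. Smallness of the argument only appears when $\im\mu\to\infty$ jointly with $j$, so the diagonalization has to be organized in the pair $(\rho_j,\mu)$, not by first sending $y\to\infty$ on a fixed ray and then $k\to0$. (A smaller slip: as $k\to+0$ the rays $\lambda=(k+\I)y$ approach the positive \emph{imaginary} axis, not the real axis, so the ``main obstacle'' you single out is not the actual difficulty; also the scale is $1/(\rho f_+(\rho))$, not $1/\sqrt{|\lambda|}$ in general.)

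For comparison, the paper's proof avoids the rescaling machinery altogether and is much shorter: $S_0\equiv1$ negates Bennewitz's condition, so by Theorem \ref{Bennewitz_HELP} the inequality \eqref{eq:help} fails, and then by Everitt's criterion (Theorem \ref{th:ev71}) the condition \eqref{eq:ev71} fails in every sector $\Gamma_\theta$, $\theta<\tfrac{\pi}{2}$. Writing $\im(\lambda^2m_+(\lambda))=|\lambda|^2|m_+(\lambda)|\sin(2\arg\lambda+\arg m_+(\lambda))$, the violating points $\lambda_j$ have $\arg\lambda_j\to\tfrac{\pi}{2}$ and $2\arg\lambda_j+\arg m_+(\lambda_j)<\pi$, which forces $\arg m_+(\lambda_j)\to0$; finally $|\lambda_j|\to\infty$ because $m_+$ is Herglotz (so the $\lambda_j$ cannot accumulate in $\C_+$) and has a pole at $\lambda=0$. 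The asymptotic, Weyl-disk route you sketch is essentially what the paper deploys later for the harder Lemma \ref{lem:5.4}, where the $\limsup$ issue is handled exactly by the sequence/Helly extraction your proposal omits; if you want to pursue your route you must add that extraction and rework the order of limits as indicated above.
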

\begin{proof}
Let $\lambda=\rho\E^{\I\theta}\in\C_+$. Denote also $m_+(\lambda)=|m_+(\lambda)|\E^{\I \theta_m}$. Note that $\theta_m\in(0,\pi)$ if $\lambda\in\C_+$ since $m_+$ is Herglotz. Then
\begin{align*}
\im (\lambda^2 m_+(\lambda))
=\rho^2|m_+|\sin( 2\theta+\theta_m).
\end{align*}
Therefore, $\im (\lambda^2 m_+(\lambda))>0$ precisely when $2\theta+\theta_m<\pi$.

If $S_0\equiv 1$ on $(0,1)$, then, by Theorem \ref{th:ev71} and Theorem \ref{Bennewitz_HELP}, there are sequences $\{\theta_j\}_1^\infty \subset (0,\frac{\pi}{2})$ and $\{\rho_j\}_1^\infty\subset \R_+$ such that $\rho_j\uparrow \frac{\pi}{2}$ and $\im (\lambda_j^2m_+(\lambda_j))>0$, where $\lambda_j:=\rho_j\E^{\I \theta_j}$, $(j\in\N)$. The latter means that $2\theta_j+(\theta_m)_j<\pi$, where $(\theta_m)_j:=\arg m_+(\rho_j\E^{\I\theta_j})\in (0,\pi)$. Therefore, $(\theta_m)_j\downarrow 0$ as $j\to \infty$. 

To complete the proof it remains to note that $\lambda_j$ can accumulate only at $0$ or at $\infty$ since $m_+$ is Herglotz. However, $m_+$ has a pole at $\lambda=0$ 
and hence $\lambda_j$ goes to $\infty$.
\end{proof}
As an immediate corollary we obtain the statement converse to Lemma \ref{lem:Bennewitz}. 
\begin{corollary}\label{cor:ben01}
Let $m_+$ be the m-function defined by \eqref{eq:weyl_s}. If $m_+$ satisfies \eqref{eq:m_ass} in any nonreal sector, then $S_0\not\equiv 1$.
\end{corollary}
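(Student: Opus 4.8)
The plan is to argue by contraposition: assuming $S_0 \equiv 1$ on $(0,1)$, I will show that \eqref{eq:m_ass} fails in some nonreal sector, which gives the desired statement. The point is that \eqref{eq:m_ass} asserts $|m_+(\lambda)| = O(\im\, m_+(\lambda))$ as $\lambda \to \infty$ in \emph{every} nonreal sector, and I want to exhibit a ray (or sector) along which this asymptotic estimate is violated.

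The key step is to invoke Lemma \ref{lem:m_assympt}. If $S_0 \equiv 1$ on $(0,1)$, that lemma produces a sequence $\{\lambda_j\} \subset \C_+$ with $\lambda_j = (k_j + \I)y_j$, $y_j \to +\infty$, $k_j \to +0$, and $\im\, m_+(\lambda_j)/\re\, m_+(\lambda_j) = o(1)$ as $j \to \infty$. The first observation is that since $m_+$ is Herglotz, $\re\, m_+(\lambda_j)$ and $\im\, m_+(\lambda_j)$ are both well-behaved, and \eqref{eq:lam_02} forces $\theta_m^{(j)} := \arg m_+(\lambda_j) \to 0$; in particular $\re\, m_+(\lambda_j) > 0$ for large $j$ and $|m_+(\lambda_j)| \ge \re\, m_+(\lambda_j) = (1 + o(1))|m_+(\lambda_j)|$, whereas $\im\, m_+(\lambda_j) = o(|m_+(\lambda_j)|)$. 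Hence $|m_+(\lambda_j)| / \im\, m_+(\lambda_j) \to +\infty$, so \eqref{eq:m_ass} cannot hold along the ray through the points $\lambda_j$.

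The one technical nuisance is that the points $\lambda_j$ do not necessarily lie on a single ray: $k_j \to 0$ only says their arguments tend to $\pi/2$. So strictly speaking \eqref{eq:m_ass} is an estimate valid in a \emph{fixed} nonreal sector, and I must ensure the $\lambda_j$ eventually enter such a sector. But this is immediate: since $\arg \lambda_j \to \pi/2$ and $|\lambda_j| \to \infty$, for any fixed $\varepsilon \in (0,\pi/2)$ all but finitely many $\lambda_j$ lie in the nonreal sector $\{\lambda \in \C_+ : \varepsilon \le \arg \lambda \le \pi - \varepsilon\}$. Applying \eqref{eq:m_ass} in that particular sector and evaluating along $\{\lambda_j\}$ yields $|m_+(\lambda_j)| \le C\,\im\, m_+(\lambda_j)$ for some constant $C$ and all large $j$, contradicting $|m_+(\lambda_j)|/\im\, m_+(\lambda_j) \to +\infty$ established above. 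Therefore \eqref{eq:m_ass} fails in that sector, so if \eqref{eq:m_ass} holds in every nonreal sector we must have $S_0 \not\equiv 1$, which is the assertion of the corollary. No step here is a genuine obstacle — the real content was already packaged into Lemma \ref{lem:m_assympt}, and the corollary is just the contrapositive reading of it combined with the elementary observation that small argument of a complex number means its imaginary part is negligible compared to its modulus.
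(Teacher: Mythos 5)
Your argument is correct and is exactly the intended one: the paper treats Corollary \ref{cor:ben01} as an immediate consequence of Lemma \ref{lem:m_assympt}, read in the contrapositive, and your write-up just makes explicit the two small points left implicit there (that $\arg m_+(\lambda_j)\to 0$ gives $|m_+(\lambda_j)|/\im\, m_+(\lambda_j)\to\infty$, and that the points $\lambda_j$ eventually lie in any fixed nonreal sector since $\arg\lambda_j\to\pi/2$ and $|\lambda_j|\to\infty$). No gap; same approach as the paper.
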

Combining Theorem \ref{Bennewitz_HELP} with Lemma \ref{lem:Bennewitz} and Corollary \ref{cor:ben01}, we arrive at another criterion for the validity of \eqref{eq:help} in terms of the Weyl--Titchmarsh coefficient $m_+$. 
\begin{corollary}\label{cor:ben}
The inequality \eqref{eq:help} is valid if and only if $m_+$ satisfies \eqref{eq:m_ass} in any nonreal sector in $\C_+$.
\end{corollary}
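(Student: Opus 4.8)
The plan is to assemble this equivalence entirely from the three results just established, with no new analytic work. The equivalence has two directions, and each direction splits according to whether we go through Bennewitz's condition \eqref{Parfenov} or argue directly.

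First I would prove the forward direction: if \eqref{eq:help} is valid, then $m_+$ satisfies \eqref{eq:m_ass} in every nonreal sector. By Theorem \ref{Bennewitz_HELP}, validity of \eqref{eq:help} is equivalent to $r$ satisfying Bennewitz's condition, i.e.\ there is $t\in(0,1)$ with $S_0(t)\neq 1$. Lemma \ref{lem:Bennewitz} then gives exactly the desired asymptotic $|m_+(\lambda)|=O(\im\,m_+(\lambda))$ as $\lambda\to\infty$ in any nonreal sector. So this direction is immediate.

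For the converse, suppose $m_+$ satisfies \eqref{eq:m_ass} in every nonreal sector; I want to conclude that \eqref{eq:help} is valid. By Corollary \ref{cor:ben01}, the hypothesis forces $S_0\not\equiv 1$, i.e.\ there exists $t\in(0,1)$ with $S_0(t)\neq 1$, which is precisely Bennewitz's condition. Applying Theorem \ref{Bennewitz_HELP} in the reverse direction then yields validity of \eqref{eq:help}. Combining the two directions gives the stated equivalence.

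The only subtlety worth flagging — and I would note it but not belabor it — is the quantifier on the sector: \eqref{eq:m_ass} is asserted to hold in \emph{every} nonreal sector in the statement, while Corollary \ref{cor:ben01} only needs it in \emph{some} sector to conclude $S_0\not\equiv 1$, and Lemma \ref{lem:Bennewitz} delivers it in every sector. These match up consistently, so there is no gap; the main obstacle, such as it is, is simply bookkeeping the logical chain $\eqref{eq:help}\ \Leftrightarrow\ \text{Bennewitz's condition}\ \Leftrightarrow\ S_0\not\equiv 1\ \Leftrightarrow\ \eqref{eq:m_ass}$ in the right directions through the cited results. No genuinely hard step remains, since all the analytic content has been placed in Lemma \ref{lem:Bennewitz} (via \cite{Ben87}) and in Lemma \ref{lem:m_assympt} underlying Corollary \ref{cor:ben01}.
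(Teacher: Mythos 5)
Your proposal is correct and follows exactly the route the paper takes: the corollary is obtained by combining Theorem \ref{Bennewitz_HELP}, Lemma \ref{lem:Bennewitz}, and Corollary \ref{cor:ben01}, with no additional analytic work. Your remark on the sector quantifiers is accurate and does not reveal any gap.
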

\begin{remark}
By Corollary \ref{cor:ben}, the inequality \eqref{eq:help} is valid if and only if $m_+$ maps any nonreal sector into a nonreal sector. However, Theorem \ref{th:ak_crit+} states that for the validity of \eqref{eq:help} it suffices to check that the image of $\I \R_+$ under $m_+$  lies in some nonreal sector. In particular, the following are equivalent:
\begin{enumerate}
\item[(i)] $m_+$ maps any nonreal sector into a nonreal sector,
\item[(ii)] $m_+$ maps $(\I 0, +\I\infty)$ into a nonreal sector.
\end{enumerate}
 \end{remark}

 
 \begin{proof}[Proof of Theorem \ref{th:ak_crit+}]
 {\em Necessity.} Assume that  \eqref{eq:help}is valid. Firstly, note that the Weyl solution $\psi_+(x,\lambda)$ defined by \eqref{eq:weyl_s} belongs to $\dom(A_+)$. Using \eqref{eq:psinorm} and \eqref{eq:weyl_s} we get
 \begin{eqnarray*}
 & \int_0^1 |\psi_+(x,\I y)|^2dx=\frac{1}{y}\im\, m_+(\I y),\\
 & \int_0^1 \big|\big(\frac{1}{r}\psi_+'(x,\I y)\big)'\big|^2dx=y^2\int_0^1 |\psi_+(x,\I y)|^2dx=y\im\, m_+(\I y),\\
 &\int_0^1 \frac{1}{r}|\psi_+'(x,\I y)|^2dx=\psi_+(x,\I y)\big(r^{-1}\psi_+'(x,-\I y)\big)|_{x=0}^1-\I y\int_0^1|\psi_+(x,\I y)|^2dx\\
 &=m_+(\I y)-\I \im\, m_+(\I y)=\re\, m_+(\I y).
 \end{eqnarray*}
 Therefore, substituting $\psi_+(x,\I y)$ into \eqref{eq:help}, we arrive at
 \[
 \re\, m_+(\I y) \le K\im\,  m_+(\I y),\quad (y>0).
 \]
 
 {\em Sufficiency.} Assume the converse, i.e., \eqref{eq:help} is not valid. Then, by Theorem \ref{Bennewitz_HELP}, $S_0\equiv 1$ on $(0,1)$ and hence, by Lemma \ref{lem:m_assympt}, there is a sequence $\{\lambda_j\}\subset \C_+$ with the properties \eqref{eq:lam_01}--\eqref{eq:lam_02}.
  
  Using \eqref{eq:m_repr}, observe that for $\lambda_j=x_j+\I y_j=(k_j+\I)y_j$
\[
\im\, m_+(\lambda_j)-\im\, m_+(\I y_j)=\int_{\R_+}\frac{2sx_j - x_j^2}{s^2+y_j^2}\frac{y_j}{(s-x_j)^2+y_j^2}d\tau_+(s).
\]
Since
\[
\frac{|2sx_j-x_j^2|}{s^2+y_j^2}\le \frac{2sx_j+x_j^2}{s^2+y_j^2}\le \frac{x_j}{y_j}+\frac{x_j^2}{y_j^2}=k_j+k_j^2\le 2k_j,\quad k_j\le1,
\]
we get
\be\label{eq:5.07}
\big|\im\, m_+(\lambda_j)-\im\, m_+(\I y_j)\big|\le 2k_j\im\, m_+(\lambda_j)\quad k_j\le1.
\ee
Further,  
\[ 
\re\, m_+(\lambda_j)-\re\, m_+(\I y_j)=
\int_{\R_+}\Big(\frac{s-x_j}{(s-x_j)^2+y_j^2}-\frac{s}{s^2+y_j^2}\Big)d\tau_+(s).
\]
Note that
\[
\Big|\frac{s-x_j}{(s-x_j)^2+y_j^2}-\frac{s}{s^2+y_j^2}\Big|\le \frac{s^2 x_j +sx_j^2 +x_jy_j^2}{(s^2+y_j^2)((s-x_j)^2+y_j^2)}\le (2k_j+k_j^2)\frac{y_j}{(s-x_j)^2+y_j^2}.
\]
Thus,  we get
\be\label{eq:5.08}
\big|\re\, m_+(\lambda_j)-\re\, m_+(\I y_j)\big|\le 
3k_j \im\, m_+(\lambda_j)\quad k_j\le1.
\ee
Therefore, combining \eqref{eq:5.07}, \eqref{eq:5.08} with \eqref{eq:lam_02} and noting that $k_j\downarrow 0$, we obtain
\[
\im\, m_+(\I y_j)=o\big(\re\, m_+(\I y_j)\big),\quad j\to\infty.
\]
Therefore, \eqref{eq:crit+} is not satisfied. 
The proof is completed.
 \end{proof}
 \begin{remark}
 According to the proof of necessity of \eqref{eq:crit+} for the validity of \eqref{eq:help}, Theorem \ref{th:ak_crit+} means that it suffices to check \eqref{eq:help} on the Weyl solutions corresponding to imaginary $\lambda=\I y$, $(y>0)$.  That is, {\em \eqref{eq:help} is valid if and only if there is $K>0$ such that \eqref{eq:help} holds true for all $f=\psi_+(x,\I y)$, $y>0$}.
 \end{remark}
 Note that Theorem \ref{th:ak_crit+}, as well as Corollary \ref{cor:ben}, does not provide the best possible value of $K$ in \eqref{eq:help}. However, it gives a lower bound for $K$.
 \begin{corollary}
 Let $K_0$ be the best possible value of $K$ in \eqref{eq:help}. Then
 \[
  \sup_{y>0}\frac{\re\, m_+(\I y)}{\im\, m_+(\I y)}\le K_0.
 \]
 \end{corollary}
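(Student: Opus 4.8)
The plan is to read the estimate straight off the necessity half of the proof of Theorem~\ref{th:ak_crit+}, where the Weyl solution serves as the test function. Fix any $K>0$ for which \eqref{eq:help} holds on all of $\dom(A_+)$ (if no such $K$ exists then $K_0=+\infty$ and the assertion is vacuous). Since $\psi_+(\cdot,\I y)\in\dom(A_+)$ for every $y>0$, I would substitute $f=\psi_+(\cdot,\I y)$ into \eqref{eq:help} and invoke the three identities already recorded in that proof:
\[
\int_0^1|\psi_+(x,\I y)|^2dx=\tfrac1y\,\im m_+(\I y),\qquad \int_0^1\Big|\big(\tfrac1r\psi_+'\big)'\Big|^2dx=y\,\im m_+(\I y),
\]
together with $\int_0^1\tfrac1r|\psi_+'(x,\I y)|^2dx=\re m_+(\I y)$. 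Feeding these into \eqref{eq:help} turns it into the numerical inequality $\big(\re m_+(\I y)\big)^2\le K^2\big(\im m_+(\I y)\big)^2$.

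Next I would pass from this squared inequality to a linear one. Because $m_+$ is a Herglotz function, $\im m_+(\I y)>0$ for every $y>0$, so taking nonnegative square roots yields $|\re m_+(\I y)|\le K\,\im m_+(\I y)$, and a fortiori
\[
\frac{\re m_+(\I y)}{\im m_+(\I y)}\le K,\qquad (y>0).
\]
Taking the supremum over $y>0$ gives $\sup_{y>0}\frac{\re m_+(\I y)}{\im m_+(\I y)}\le K$.

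Finally, the left-hand side is independent of $K$, while the displayed bound holds for every admissible $K$; since any $K>K_0$ is admissible (if \eqref{eq:help} holds with some constant, it holds with every larger one), letting $K\downarrow K_0$ delivers $\sup_{y>0}\frac{\re m_+(\I y)}{\im m_+(\I y)}\le K_0$. I do not expect a real obstacle here: the only two points deserving a word of care are the extraction of the square root (covered by the Herglotz positivity of $\im m_+$ on $\I\R_+$) and the supremum/infimum interchange at the end, which is immediate from the definition of $K_0$ as the infimum of admissible constants.
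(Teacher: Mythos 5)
Your argument is correct and is exactly the paper's route: the corollary is read off from the necessity part of the proof of Theorem \ref{th:ak_crit+}, substituting the Weyl solution $\psi_+(\cdot,\I y)$ into \eqref{eq:help} and using the three displayed identities to get $\re\, m_+(\I y)\le K\,\im\, m_+(\I y)$ for every admissible $K$, then passing to the infimum $K_0$. Your extra care about the square root (via $\im\, m_+(\I y)>0$) and about $K\downarrow K_0$ only makes explicit what the paper leaves implicit.
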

\begin{proof}
The claim immediately follows from the proof of necessity of Theorem \ref{th:ak_crit+}.
%
\end{proof}
\begin{remark}
Theorem \ref{th:ak_crit+} is concerned with the particular case of the general HELP inequality \eqref{eq:iHELP}. However, Theorem \ref{th:ak_crit+} remains true under the following assumptions on coefficients in \eqref{eq:iHELP}: 
\begin{enumerate}
\item[(i)] \eqref{eq:iSL} is regular at both endpoints; 
\item[(ii)] the spectral problem \eqref{eq:iSL} subject to the Neumann boundary conditions has a nonnegative spectrum; 
\item[(iii)] the functions from $\gD_{\max}$ also satisfy the Neumann boundary condition at $x=b$.  
\end{enumerate}

The case of a singular endpoint $x=b$ will be considered elsewhere. 
\end{remark}
 

\section{Volkmer's inequality: the first criterion}\label{sec:vol_I}


Assume now that $r\in L^1(-1,1)$ is real-valued and $xr(x)>0$ a.e. on $(-1,1)$. 
Consider the following inequality
\be\label{eq:volkmer}
\Big(\int_{-1}^1\frac{1}{|r|}|f'|^2dx\Big)^2\le K^2 \int_{-1}^1|f|^2dx\, \int_{-1}^1\big|\big(\frac{1}{r}f'\big)'\big|^2dx,\qquad (f\in\dom(A)).
\ee
Here $\dom(A)$ denotes the domain of the operator $A$ and is given by \eqref{eq:dom_a},
\[
\dom(A)=\{f\in L^2(-1,1):\, f,\, r^{-1}f'\in AC[-1,1],\, (r^{-1}f')(\pm 1)=0,\ \ga[f]\in L^2\}.
\]
The inequality \eqref{eq:volkmer} is said to be valid if there is $K>0$ such that \eqref{eq:volkmer} holds for all $f\in\dom(A)$.

Note that the inequality \eqref{eq:volkmer} differs from \eqref{eq:iHELP} 
since the left-hand side in \eqref{eq:volkmer} is the Dirichlet integral corresponding to the operator $L$ (see \eqref{eq:LA}), however, we consider \eqref{eq:volkmer} on functions from $\dom(A)$.   Clearly, the inequality \eqref{eq:volkmer} considered on functions $f\in \dom(L)$ holds true with $K=1$ (this follows from integration by parts applied to the left-hand side of \eqref{eq:volkmer} and subsequent use of the Cauchy--Schwarz inequality).

On the other hand, if we consider \eqref{eq:volkmer} on a larger domain $\gD_{\max}$ (see \eqref{eq:dom_max}), then clearly \eqref{eq:volkmer} is equivalent to two separated HELP inequalities of the form \eqref{eq:help} and then the answer is given by criteria from Section \ref{sec:help}. However, we consider \eqref{eq:volkmer} on a domain $\dom(A)$, which is smaller than $\gD_{\max}$. Namely, $f\in\dom(A)$ precisely when $f\in\gD_{\max}$ and satisfies additional boundary conditions at $x=0$:
\be\label{eq:bc}
f(+0)=f(-0),\quad \big(r^{-1}f\big)'(+0)=\big(r^{-1}f\big)'(-0).
\ee
Therefore, conditions from Section \ref{sec:help} become only sufficient for the validity of \eqref{eq:volkmer}.

We shall present two criteria for the validity of \eqref{eq:volkmer}. Note that the first one, Theorem \ref{th:crit_mf}, is the analog of the Everitt criterion \eqref{eq:ev71} and the second criterion, Theorem \ref{th:criterion}, is the analog of Theorem \ref{th:ak_crit+}. 

Before formulate the first result, we need some notation. Let $m_+$ and $m_-$ be the $m$-functions defined by \eqref{eq:weyl_s}. 
Set 
\be\label{eq:ti_m}
\tilde{M}_\pm(\lambda):=\frac{1}{\im m_\pm(\lambda)}
\left(\begin{array}{cc}
\im\lambda &  -\im(\lambda m_\pm(\lambda))\\
-\im(\lambda m_\pm(\lambda)) & \im\lambda |m_\pm(\lambda)|^2
\end{array}\right),
\ee
and 
\be
\label{eq:ma}
M_A(\lambda):
=\tilde{M}_+(\lambda)+\tilde{M}_-(\lambda),\quad \lambda\in\C_+.
\ee
\begin{theorem}\label{th:crit_mf}
Let $m_+$ and $m_-$ be the $m$-functions defined by \eqref{eq:weyl_s}. Let also $M_A$ be given by \eqref{eq:ti_m}, \eqref{eq:ma}. Then \eqref{eq:volkmer} is valid if and only if there is $\theta \in (0,\frac{\pi}{2})$ such that
\be\label{eq:m_cond}
M_A(\lambda)\ge 0\quad \text{for all}\quad \lambda\in\C_+\quad \text{with}\quad \frac{|\re\lambda|}{|\lambda|}=\cos \theta.
\ee

Moreover, the best possible $K$ is given by 
\[
K=\frac{1}{\cos\theta_0},\qquad \theta_0:=\inf\big\{\theta\in\big(0,\frac{\pi}{2}\big]:\, \eqref{eq:m_cond}\, \text{is satisfied}\big\}.
\]
\end{theorem}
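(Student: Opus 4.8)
The plan is to follow the Everitt--Hardy--Littlewood strategy, adapting the operator-theoretic reformulation in \cite{ez78} to the present setting where the inequality is posed on $\dom(A)$ rather than $\gD_{\max}$. The first step is to recast \eqref{eq:volkmer} as an abstract inequality. For $f\in\dom(A)$ set $g:=\ga[f]\in L^2(-1,1)$; then $f = A^{-1}g$ (note $0$ is not an eigenvalue of $A$, since $\dom(A)\ni 1$ fails the boundary conditions $(r^{-1}f')(\pm1)=0$ only if one allows constants, which are excluded). The right-hand side of \eqref{eq:volkmer} becomes $K^2\|A^{-1}g\|^2\|g\|^2$, and the left-hand side, after integration by parts using the boundary conditions at $\pm1$ and the matching conditions \eqref{eq:bc} at $0$, becomes $|(f,g)|^2 = |(A^{-1}g,g)|^2$ — here one uses that $\int \frac{1}{|r|}|f'|^2 = \int_{-1}^1 f\,\overline{\ell[f]}\,dx$ and that $\ell = J^{-1}\ga$ with $J=\sgn x$, so $\ell[f] = (\sgn x)\,\ga[f] = (\sgn x)g$, whence $\int\frac1{|r|}|f'|^2 = \int (\sgn x) f \bar g$; the claim is that for $f\in\dom(A)$ this equals $\int f\bar g$ up to a controlled correction, or more precisely one should work directly with $\ga$. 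So \eqref{eq:volkmer} is equivalent to a bound of the form $|(A^{-1}g,g)| \le K\,\|g\|\,\|A^{-1}g\|$ for all $g\in L^2$, which is a statement about the operator $A^{-1}$ and its numerical range.

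The second step is the key reduction, exactly as in Everitt: a bound $|(Tg,g)|\le K\|g\|\,\|Tg\|$ (with $T=A^{-1}$) is a geometric condition saying the ``angle'' between $g$ and $Tg$ stays bounded away from $\pi/2$, which by a Phragmén--Lindelöf / subharmonicity argument applied to the Herglotz-type function $\lambda\mapsto (\,(T-\lambda)^{-1}g,g)$ — or rather to an associated quadratic form — is equivalent to a sign condition on a boundary of a sector. Concretely, one introduces the $2\times2$ matrix-valued function built from the Weyl solutions $\psi_\pm$. Using \eqref{eq:neum}, any $f\in\dom(A)$ with $\ga[f]=\lambda f + (\text{something in }\dom(A_{\min}))$ is spanned, modulo $\gD_{\min}$, by $\psi_+(\cdot,\lambda)\chi_+$ and $\psi_-(\cdot,-\lambda)\chi_-$ (recall \eqref{eq:defect_A}); evaluating the three quadratic forms $\int\frac1{|r|}|f'|^2$, $\int|f|^2$, $\int|\ga[f]|^2$ on a linear combination $c_+\psi_+(\cdot,\lambda)\chi_+ + c_-\psi_-(\cdot,-\lambda)\chi_-$ and using \eqref{eq:psinorm} together with integration by parts (as in the proof of Theorem \ref{th:ak_crit+}) produces precisely the matrices $\tilde M_+(\lambda)$ and $\tilde M_-(-\lambda)$ — after accounting for the reflection $\lambda\mapsto -\lambda$ on the left interval — so that $M_A$ as defined in \eqref{eq:ma} (with the correct argument, $\tilde M_+(\lambda)+\tilde M_-(\lambda)$ matching the convention in \eqref{eq:ma}) is the Gram-type matrix governing the inequality. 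One then shows \eqref{eq:volkmer} with constant $\sec\theta$ holds if and only if $M_A(\lambda)\ge 0$ on the rays $|\re\lambda|/|\lambda|=\cos\theta$, by the standard argument that the validity of the inequality on all of $\dom(A)$ reduces, via density and the spectral theorem for $A$ (self-adjoint in the Krein space sense but with real spectrum), to its validity on the ``test functions'' $\psi_\pm$, i.e.\ to nonnegativity of the matrix on the relevant sector boundary; the $\inf$ over such $\theta$ then gives the optimal $K$.

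For the optimality statement, the plan is: $(\Leftarrow)$ if $M_A(\lambda)\ge0$ on the ray with parameter $\theta$, a Phragmén--Lindelöf argument in the sector $\{\theta<\arg\lambda<\pi-\theta\}$ (using that each entry of $M_A$ is harmonic or has controlled growth, being built from Herglotz functions $m_\pm$) propagates nonnegativity of the relevant quadratic form into the sector and down to $\I\R_+$, and then a contour-integral representation of $(A^{-1}g,g)$ and $\|A^{-1}g\|^2$ over the sector boundary yields \eqref{eq:volkmer} with $K=\sec\theta$; $(\Rightarrow)$ if \eqref{eq:volkmer} holds with some $K$, substituting the explicit test functions $c_+\psi_+(\cdot,\lambda)\chi_+ + c_-\psi_-(\cdot,-\lambda)\chi_-$ and optimizing over $(c_+,c_-)\in\C^2$ forces $M_A(\lambda)\ge0$ on the corresponding sector boundary $\sec\theta = K$, giving both the criterion and the lower bound $\sec\theta_0\le K$. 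I expect the main obstacle to be the Phragmén--Lindelöf step in $(\Leftarrow)$: one must verify the growth of the matrix entries of $M_A$ in the sector is subexponential (this is where the Herglotz representation \eqref{eq:m_repr} and the bounds \eqref{eq:m_repr2} enter), handle the pole of $m_\pm$ at $\lambda=0$ carefully (cf.\ the end of the proof of Lemma \ref{lem:m_assympt}), and justify passing from the matrix inequality on a one-dimensional ray to the functional inequality on the infinite-dimensional domain $\dom(A)$ — the latter requiring a density argument for the span of the Weyl solutions $\{c_+\psi_+(\cdot,\lambda)\chi_+ + c_-\psi_-(\cdot,-\lambda)\chi_-\}$ in $\dom(A)$, which follows from \eqref{eq:neum} but needs to be stated with care since $\dom(A)$ is a proper subspace of $\gD_{\max}$ cut out by \eqref{eq:bc}.
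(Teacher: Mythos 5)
Your plan has the right raw ingredients (test on Weyl solutions, a sector/ray condition, the interface conditions \eqref{eq:bc} as linear constraints), but two of its load-bearing steps are not correct as stated. First, the abstract reformulation $|(A^{-1}g,g)|\le K\|g\|\,\|A^{-1}g\|$ fails on both counts you rely on: $0$ \emph{is} an eigenvalue of $A$ (the constant function satisfies $(r^{-1}f')(\pm1)=0$ and $\ga[1]=0$, which is exactly why $m_\pm$ have a pole at $0$), and the left-hand side of \eqref{eq:volkmer} is $\gt[f]=\int\frac1{|r|}|f'|^2$, which is \emph{not} $(\ga[f],f)=\int\frac1r|f'|^2$; integrating $\gt$ by parts produces $\ell[f]=(\sgn x)\ga[f]$ plus interface terms at $x=0$ (for $f\in\dom(A)$ the function $|r|^{-1}f'$ jumps there), so the inequality is not a numerical-range bound for $A^{-1}$. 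The actual proof keeps the three forms $\gt[f]$, $\|f\|^2$, $\|\ell[f]\|^2=\|\ga[f]\|^2$ separate, introduces $J_\lambda(f)=|\lambda|^2\|f\|^2-2\re\lambda\,\gt[f]+\|\ell[f]\|^2$, and by completing the square shows that validity of \eqref{eq:volkmer} with constant $K$ is \emph{equivalent} to $J_\lambda\ge0$ on the two rays $\re\lambda/|\lambda|=\pm1/K$ (Lemma \ref{lem:3.1}). Because of this exact equivalence, your entire Phragm\'en--Lindel\"of/contour-integration machinery for propagating positivity from a ray into a sector is unnecessary, and it is also not a route to the sharp constant.

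Second, the reduction to a $2\times2$ matrix condition is not a density or spectral-theorem argument (the span of Weyl solutions is certainly not dense in $\dom(A)$). It is exact linear algebra: by the von Neumann decomposition \eqref{eq:neum} for $L_{\min}$, every $f\in\gD_{\max}$ is $f_0+f_\lambda+f_{\bar\lambda}$, and one checks with Green's identity \eqref{eq:green} that $J_\lambda\ge0$ on $\gD_{\min}$ and that $J_\lambda(f_0,f_\lambda)=J_\lambda(f_0,f_{\bar\lambda})=0$, so positivity is decided on the four-dimensional space $\cN_\lambda\dotplus\cN_{\bar\lambda}$ (Lemmas \ref{lem:j_lam}, \ref{lem:dmax}). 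Note this forces the use of $\cN_\lambda(L_{\min})=\Span\{\psi_+(\cdot,\lambda)\chi_+,\psi_-(\cdot,\lambda)\chi_-\}$ -- both halves at the \emph{same} $\lambda$ -- because the cross-term cancellation uses $\ell[f_\lambda]=\lambda f_\lambda$; your proposal to work with $\cN_\lambda(A_{\min})$ and $\tilde M_-(-\lambda)$ (which you yourself flag as inconsistent with \eqref{eq:ma}) would destroy that cancellation, and the reflection $\lambda\mapsto-\lambda$ only enters later, in Theorem \ref{th:criterion}. Finally, the passage from $\gD_{\max}$ (where the criterion is $M_\pm(\lambda)\ge0$, i.e.\ Everitt's condition) to $\dom(A)$ is the real point of the theorem: the conditions \eqref{eq:bc} become $-D_+(\lambda)C_+=D_-(\lambda)C_-$ with $D_\pm$ as in \eqref{eq:d_pmA}, and eliminating $C_-$ conjugates the blocks, $\tilde M_\pm=4D_\pm^{-*}M_\pm D_\pm^{-1}$, yielding exactly $M_A=\tilde M_+ +\tilde M_-$. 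Your sketch only gestures at this elimination, and without it neither the specific matrix $M_A$ nor the value $K=1/\cos\theta_0$ is obtained.
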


The proof is based on ideas of the operator--theoretic proof of the HELP inequality \cite[\S 8]{ee82}. Note that this method was first proposed in \cite{ez78}.  

We divide the proof in several steps.

Firstly, for $f,g\in \gD_{\max}$ consider the following bilinear form 
\[
\gt[f,g]:=\int_{-1}^1\frac{1}{|r|}f'\bar{g}'dx,\qquad \gt[f]:=\gt[f,f].
\]
Then we can rewrite \eqref{eq:volkmer} as follows
\[
\gt[f]\le K \|f\|_{L^2}\|\ga[f]\|_{L^2},\qquad (f\in \dom(A)).
\]
Clearly, for all $f,g\in \gD_{\max}$ we have $(\ell[f],\ell[g])=(\ga[f],\ga[g])$ and hence \eqref{eq:volkmer} becomes
\be\label{eq:vol2}
\gt[f]\le K \|f\|_{L^2}\|\ell[f]\|_{L^2},\qquad (f\in \dom(A)).
\ee
Further, for $\lambda\in\C_+$ let us consider the hermitian form
\be\label{eq:J}
J_\lambda(f,g)=|\lambda|^2(f,g)_{L^2}-2\re\, \lambda\, \gt[f,g]+(\ell[f],\ell[g])_{L^2},\quad f,g\in\gD_{\max}.
\ee
Noting that
\be
J_\lambda(f):=J_\lambda(f,f)=\|f\|^2\Big(|\lambda|-\frac{\re\, \lambda}{|\lambda|}\frac{\gt[f]}{\|f\|^2}\Big)^2
+\frac{1}{\|f\|^2}\Big(\|f\|^2\|\ell[f]\|^2-\frac{(\re\, \lambda)^2}{|\lambda|^2}\gt[f]^2\Big),\label{eq:j_lam}
\ee 
we immediately arrive at the following statement.
\begin{lemma}\label{lem:3.1}
The inequality  \eqref{eq:volkmer} holds true for all $f\in \gD_{\max}$ ($f\in \dom(A)$) with some $K>0$ precisely if $J_\lambda(f)$ is positive for all $f\in \gD_{\max}$ ($f\in \dom(A)$) on two rays in the upper half-plane for which $\frac{\re \lambda}{|\lambda|}=\pm \frac{1}{K}$.
\end{lemma}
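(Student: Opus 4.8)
The plan is to establish Lemma \ref{lem:3.1} by unwinding the explicit identity \eqref{eq:j_lam} for $J_\lambda(f)$ and reading off when the whole expression is sign-definite as $\lambda$ ranges over a ray in $\C_+$. First I would fix a ray in the upper half-plane, i.e.\ write $\lambda=\rho\,\E^{\I\theta}$ with $\theta\in(0,\pi)$ fixed and $\rho>0$ varying, so that $\frac{\re\lambda}{|\lambda|}=\cos\theta$ is constant along the ray. Along such a ray \eqref{eq:j_lam} reads
\[
J_\lambda(f)=\|f\|^2\Big(\rho-\cos\theta\,\frac{\gt[f]}{\|f\|^2}\Big)^2+\frac{1}{\|f\|^2}\Big(\|f\|^2\|\ell[f]\|^2-\cos^2\theta\,\gt[f]^2\Big),
\]
where now only $\rho$ depends on the position on the ray. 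The first summand is a nonnegative quantity that, for each fixed $f$ with $\gt[f]>0$, actually vanishes for the choice $\rho=\cos\theta\,\gt[f]/\|f\|^2>0$ (which indeed lies on the chosen ray). Hence, for that particular $f$ and that particular $\rho$, the sign of $J_\lambda(f)$ coincides with the sign of the second, $\rho$-independent bracket.

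Next I would argue the two implications. For the ``only if'' direction: suppose \eqref{eq:volkmer} holds for all admissible $f$ (either $f\in\gD_{\max}$ or $f\in\dom(A)$) with constant $K$, and pick $\theta$ with $\cos\theta=1/K$. Then $\|f\|^2\|\ell[f]\|^2-\cos^2\theta\,\gt[f]^2=\|f\|^2\|\ell[f]\|^2-K^{-2}\gt[f]^2\ge0$ by \eqref{eq:vol2} (recalling $(\ell[f],\ell[g])=(\ga[f],\ga[g])$), so both brackets in the displayed formula are nonnegative and $J_\lambda(f)\ge0$ on the ray $\frac{\re\lambda}{|\lambda|}=1/K$; the opposite ray $\frac{\re\lambda}{|\lambda|}=-1/K$ is handled identically since only $\cos^2\theta=1/K^2$ enters. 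Conversely, if $J_\lambda(f)>0$ (or $\ge0$; I would be slightly careful about strict versus non-strict, since the first bracket can be made to vanish) for all admissible $f$ on the two rays with $\frac{\re\lambda}{|\lambda|}=\pm1/K$, then specializing to the value of $\rho$ that kills the first bracket forces $\|f\|^2\|\ell[f]\|^2-K^{-2}\gt[f]^2\ge0$, i.e.\ $\gt[f]\le K\|f\|\,\|\ell[f]\|$, which is \eqref{eq:vol2}, hence \eqref{eq:volkmer}. The case $\gt[f]=0$ is trivial since then \eqref{eq:volkmer} holds automatically, so no generality is lost by assuming $\gt[f]>0$ when choosing $\rho$.

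The only genuinely delicate point is the strict-versus-nonstrict inequality in the statement (``$J_\lambda(f)$ is positive''): since the first bracket genuinely vanishes for a suitable $\rho$ on the ray, one cannot literally have $J_\lambda(f)>0$ for all $f$ on the ray unless the second bracket is strictly positive, which would correspond to a strict HELP inequality. I expect the intended reading is that \eqref{eq:volkmer} (a non-strict inequality with \emph{some} $K$) corresponds to $J_\lambda\ge0$, and I would simply note that the chain of equivalences goes through verbatim with ``$\ge 0$'' in place of ``positive,'' the phrasing in the lemma being a minor abuse; alternatively one observes that if the second bracket is $\ge0$ for all $f$ then it is automatically $>0$ whenever $f\not\equiv0$ and $\ell[f]\not\equiv0$, and the degenerate cases contribute nothing. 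This is really a matter of bookkeeping rather than a substantive obstacle — the entire lemma is an algebraic rearrangement of the quadratic \eqref{eq:j_lam}, and no analysis of $m$-functions or of the operators is needed at this stage.
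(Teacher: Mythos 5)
Your proposal is correct and follows exactly the paper's route: the lemma is read off directly from the algebraic identity \eqref{eq:j_lam}, with the second ($\rho$-independent) bracket being \eqref{eq:vol2} and the first, a perfect square, annihilated by a suitable choice of $|\lambda|$ on the ray with $\re\lambda>0$ (the paper simply says the statement is "immediate" from \eqref{eq:j_lam}, and you have supplied precisely those details, including the trivial case $\gt[f]=0$ and the harmless strict-versus-nonstrict wording). No gap.
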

The next result shows that it suffices to consider the form $J_\lambda$ on a finite dimensional subspace.
\begin{lemma}\label{lem:j_lam}
Let $\cN_\lambda$, $\lambda\in\C\setminus\R$, be the deficiency subspace \eqref{eq:defect_L}. Then the form $J_\lambda$ is nonnegative on $\gD_{\max}$ if and only if it is so on $\cN_{\lambda}\dotplus\cN_{\overline{\lambda}}$ 
\end{lemma}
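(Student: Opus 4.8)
The plan is to prove both implications by exploiting the von Neumann decomposition \eqref{eq:neum}, $\gD_{\max}=\gD_{\min}\dotplus\cN_\lambda\dotplus\cN_{\overline{\lambda}}$, together with the fact that $J_\lambda$ is built from the operators $L_{\min}$ and the Dirichlet form $\gt$. The nontrivial direction is of course that nonnegativity of $J_\lambda$ on the finite-dimensional space $\cN_\lambda\dotplus\cN_{\overline{\lambda}}$ forces nonnegativity on all of $\gD_{\max}$; the reverse implication is trivial since $\cN_\lambda\dotplus\cN_{\overline{\lambda}}\subset\gD_{\max}$.

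\medskip
\emph{Key computation.} First I would observe that $J_\lambda(f)$ can be rewritten purely in terms of the operator $L_{\min}$ (and its adjoint $L_{\max}$). Using integration by parts, for $f\in\gD_{\max}$ one has $\gt[f]=(\ell[f],f)_{L^2}+\text{(boundary terms at }\pm1\text{ and }0)$, where the boundary terms at $\pm1$ vanish by the Neumann conditions built into $\gD_{\max}$. Hence, modulo the boundary contribution at $x=0$, $J_\lambda(f)$ looks like $\|(L_{\min}-\lambda)f\|^2$ up to the sign issue coming from $\re\lambda$ appearing with a factor $2$ rather than via $\lambda$ and $\overline\lambda$; more precisely one gets an identity of the shape
\[
J_\lambda(f)=\|\ell[f]-\lambda f\|_{L^2}^2 + 2\re\lambda\big((\ell[f],f)_{L^2}-\gt[f]\big),
\]
and the second term is exactly (a multiple of) the boundary form at $x=0$. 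This rewriting is the heart of the matter: it shows $J_\lambda$ is the sum of a manifestly nonnegative quadratic functional and a finite-rank correction living on the boundary values at $0$.

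\medskip
\emph{Reduction to $\cN_\lambda\dotplus\cN_{\overline\lambda}$.} Now take an arbitrary $f\in\gD_{\max}$ and decompose $f=f_0+u+v$ with $f_0\in\gD_{\min}$, $u\in\cN_\lambda$, $v\in\cN_{\overline\lambda}$ according to \eqref{eq:neum}. Since $f_0\in\gD_{\min}\subset\dom(L_{\min})$ we have $\ell[f_0]=L_{\min}f_0$, $(L_{\min}-\lambda)$ acts nicely, and $f_0$ has zero boundary data at $0$. The plan is to show $J_\lambda(f)\ge J_\lambda(\Pi f)$ where $\Pi f := u+v$ is the "boundary part". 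For the $\|\ell[f]-\lambda f\|^2$ piece one uses that $(L_{\min}-\lambda)f_0\perp\cN_\lambda$ (standard deficiency-subspace orthogonality), so cross terms between $f_0$ and $u$ drop out, while the term involving $v\in\cN_{\overline\lambda}$ is handled because $(L_{\min}-\lambda)$ applied to elements of $\cN_{\overline\lambda}$ stays controlled. For the boundary correction term one uses that $f$ and $u+v$ have the same boundary data at $0$ (since $f_0$ contributes zero there), so this term is literally unchanged. Combining, $J_\lambda(f)$ differs from $J_\lambda(u+v)$ by a sum of squares of norms of $\gD_{\min}$-components, hence $J_\lambda(f)\ge J_\lambda(u+v)\ge 0$. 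The condition \eqref{eq:m_cond} with $M_A$ as in \eqref{eq:ti_m}–\eqref{eq:ma} is then recognized as the $2\times 2$ matrix expressing $J_\lambda$ restricted to $\cN_\lambda\dotplus\cN_{\overline\lambda}$ in the basis $\{\psi_+\chi_+,\psi_-\chi_-\}$ and its $\overline\lambda$-counterpart, using \eqref{eq:psinorm} to evaluate the relevant inner products and boundary terms; but that identification is for the subsequent lemma, here one only needs the equivalence of nonnegativity statements.

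\medskip
\emph{Expected obstacle.} The main technical difficulty will be controlling the cross terms between $f_0\in\gD_{\min}$ and $v\in\cN_{\overline\lambda}$ in the first term $\|\ell[f]-\lambda f\|^2$: unlike the $f_0$–$u$ cross term, this one is not killed by orthogonality of deficiency spaces (the natural orthogonality is $(L_{\min}-\lambda)\gD_{\min}\perp\cN_\lambda$, and $v$ lies in the "wrong" deficiency space). One resolves this by writing $\ell[v]-\lambda v = \ell[v]-\overline\lambda v + (\overline\lambda-\lambda)v = (\overline\lambda-\lambda)v$ since $v\in\cN_{\overline\lambda}$ means $\ell[v]=\overline\lambda v$, so in fact $(L_{\min}-\lambda)f_0$ must be paired against $(\overline\lambda-\lambda)v$, which is a bounded multiple of $v\in L^2$, and one absorbs/estimates this cross term together with the boundary correction — or, more cleanly, one minimizes $J_\lambda$ over the affine family $f_0\mapsto f_0+u+v$ with $u,v$ fixed, noting $J_\lambda$ is a real-analytic nonnegative-leading quadratic in $f_0$ whose minimum over $\gD_{\min}$ is attained and equals a quantity depending only on the boundary data, i.e. only on $u+v$. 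This minimization argument (rather than a brute-force orthogonal splitting) is the approach I would ultimately commit to, since it sidesteps the delicate cross-term bookkeeping and directly yields $\inf_{f\in\gD_{\max}}J_\lambda(f) = \inf_{w\in\cN_\lambda\dotplus\cN_{\overline\lambda}}J_\lambda(w)$, from which the claimed equivalence is immediate.
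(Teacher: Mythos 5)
Your overall strategy (decompose $f=f_0+u+v$ via \eqref{eq:neum} and show the $\gD_{\min}$-component can only increase $J_\lambda$) is the same as the paper's, but at the decisive step there is a genuine gap. The paper's proof rests on an exact cancellation: since $f_0\in\gD_{\min}$ has vanishing boundary data at $0$, Green's identity \eqref{eq:green} gives $\gt[f_0,g]=(\ell[f_0],g)_{L^2}=(f_0,\ell[g])_{L^2}$ for $g$ in either deficiency space, whence $J_\lambda(f_0,f_\lambda)=(f_0,f_\lambda)\big(|\lambda|^2-2\bar\lambda\,\re\lambda+\bar\lambda^2\big)=0$ and $J_\lambda(f_0,f_{\bar\lambda})=(f_0,f_{\bar\lambda})\big(|\lambda|^2-2\lambda\,\re\lambda+\lambda^2\big)=0$ (the polynomial $\mu^2-2\mu\,\re\lambda+|\lambda|^2=(\mu-\lambda)(\mu-\bar\lambda)$ vanishes at $\mu=\lambda,\bar\lambda$), together with $J_\lambda(f_0)=\|\ell[f_0]-\re\lambda\, f_0\|^2+(\im\lambda)^2\|f_0\|^2\ge 0$; hence $J_\lambda(f)=J_\lambda(f_0)+J_\lambda(u+v)$ and the lemma follows. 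You never establish this cancellation, and your ``expected obstacle'' rests on a mix-up of the standard orthogonality: one has $\mathrm{ran}(L_{\min}-\lambda)\perp\cN_{\bar\lambda}$, not $\perp\cN_\lambda$. Consequently, in $\|\ell[f]-\lambda f\|^2$ the cross term of $(\ell-\lambda)f_0$ against $v\in\cN_{\bar\lambda}$ --- precisely the one you declare ``not killed by orthogonality'' --- is the one that orthogonality does kill, while the $f_0$--$u$ cross term is absent for the trivial reason that $(\ell-\lambda)u=0$. With the pairing corrected your first route closes and essentially reproduces the paper's computation; as written, it does not.

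The fallback you ultimately commit to, minimizing $J_\lambda(f_0+u+v)$ over $f_0\in\gD_{\min}$, does not repair this, because it assumes what must be proved: that the fiberwise minimum equals $J_\lambda(u+v)$, i.e.\ is attained at $f_0=0$. Observing that the minimum ``depends only on the boundary data'' is automatic and insufficient --- for a general quadratic form the minimum over an affine fiber can lie strictly below the value at $f_0=0$ and be negative even when $J_\lambda(u+v)\ge 0$; excluding this is exactly the cross-term statement you tried to sidestep (the Euler--Lagrange condition at the minimizer again reduces to $J_\lambda(f_0^*,u+v)=0$, which you have not shown). A smaller inaccuracy: your key identity is wrong as stated, since $(\ell[f],f)_{L^2}$ need not be real on $\gD_{\max}$; the correct version is $J_\lambda(f)=\|\ell[f]-\lambda f\|^2+2\re\big[\bar\lambda\,\big((|r|^{-1}f')\bar f\big)\big|^{+0}_{-0}\big]$, with $\re[\bar\lambda(\cdot)]$ rather than $\re\lambda\cdot(\cdot)$. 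In that corrected form the boundary term is unchanged under adding $f_0$, and together with $\mathrm{ran}(L_{\min}-\lambda)\perp\cN_{\bar\lambda}$ it yields $J_\lambda(f_0+u+v)\ge J_\lambda(u+v)$, which is the paper's conclusion by slightly different bookkeeping.
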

\begin{proof}
By the first von Neumann formula \eqref{eq:neum}, $f\in\gD_{\max}$ admits the representation
\[
f=f_0+f_{\lambda}+f_{\overline{\lambda}},\qquad f_0\in\gD_{\min},\quad f_\lambda\in\cN_{\lambda}(L). 
\]
Further, if $f,g\in\gD_{\max}$, then 
\be\label{eq:green}
 \gt[f,g]=-(|r|^{-1}f')\bar{g}|^{+0}_{-0}+(\ell[f],g)_{L^2}= -f(|r|^{-1}\bar{g}')|^{+0}_{-0}+(f,\ell[g])_{L^2}.
\ee
Hence, by \eqref{eq:green}, $\gt[f_0]=(\ell[f_0],f_0)$ and then
\[
J_\lambda(f_0)=|\lambda|^2\|f_0\|^2-2\re \lambda (\ell[f_0],f_0)+\|\ell[f_0]\|^2=\|\ell[f_0]-\re\lambda f_0\|^2+|\im \lambda|^2\|f_0\|^2\ge 0.\nonumber
\]
Therefore, $J_\lambda$ is positive on $\gD_{\min}$ if $\lambda\in \C_+$.

Let $f_\lambda\in \cN_\lambda$. Then, by \eqref{eq:green}, we get
\[
 J_\lambda(f_0, f_\lambda)=|\lambda|^2(f_0,f_\lambda)-2\bar{\lambda}\, \re\, \lambda (f_0,f_\lambda)+\bar{\lambda}^2(f_0,f_\lambda)=0,
\]
and similarly
\[
 J_\lambda(f_0, f_{\overline{\lambda}})=|\lambda|^2(f_0,f_{\bar{\lambda}})-2\lambda\, \re\, \lambda (f_0,f_{\bar{\lambda}})+\lambda^2(f_0,f_{\bar{\lambda}})=0.
\]
This completes the proof.
\end{proof}
Consider the following functions
\be\label{eq:M_pm}
M_\pm(\lambda)=-\left(\begin{array}{cc}
\re (\lambda m_\pm(\lambda)) &  \re\, \lambda\, \overline{m_\pm(\lambda)}\\
\re\, \lambda\, m_\pm(\lambda) & \re (\lambda m_\pm(\lambda))
\end{array}\right),\quad \lambda\in\C_+.
\ee 
\begin{lemma}\label{lem:dmax}
Let $m_\pm$  and $M_\pm$ be defined by  \eqref{eq:weyl_s} and \eqref{eq:M_pm}, respectively. The form $J_\lambda$ is nonnegative on $\gD_{\max}$ if and only if  both matrices $M_+(\lambda)$ and $M_-(\lambda)$ are nonnegative.
\end{lemma}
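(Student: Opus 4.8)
The plan is to reduce, via Lemma~\ref{lem:j_lam}, to the finite-dimensional subspace $\cN_\lambda\dotplus\cN_{\overline\lambda}$, to decouple the form $J_\lambda$ over the two subintervals $\cI_+$ and $\cI_-$, and then to compute the resulting $2\times2$ Gram matrices explicitly. By Lemma~\ref{lem:j_lam}, $J_\lambda$ is nonnegative on $\gD_{\max}$ if and only if it is nonnegative on $\cN:=\cN_\lambda\dotplus\cN_{\overline\lambda}$, which by \eqref{eq:defect_L} is spanned by $\psi_+(\cdot,\lambda)\chi_+$, $\psi_+(\cdot,\overline\lambda)\chi_+$, $\psi_-(\cdot,\lambda)\chi_-$ and $\psi_-(\cdot,\overline\lambda)\chi_-$. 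Since $\chi_+$ and $\chi_-$ have disjoint supports and $\ell$ is a local expression, each of the three quadratic terms making up $J_\lambda$ in \eqref{eq:J} --- the $L^2$-norm, the form $\gt$, and $\|\ell[\cdot]\|_{L^2}^2$ --- is an integral of a product of functions supported on $\cI_+$ and on $\cI_-$, and hence splits as a sum over the two subintervals with no cross term. Writing $f\in\cN$ as $f=f^+\chi_++f^-\chi_-$ with $f^\pm$ in $V^\pm:=\Span\{\psi_\pm(\cdot,\lambda),\psi_\pm(\cdot,\overline\lambda)\}$, I therefore obtain $J_\lambda(f)=J_\lambda^+(f^+)+J_\lambda^-(f^-)$, where $J_\lambda^\pm$ is the analogue of \eqref{eq:J} on $\cI_\pm$; since $f^+$ and $f^-$ vary independently, $J_\lambda\ge 0$ on $\cN$ is equivalent to $J_\lambda^+\ge 0$ on $V^+$ together with $J_\lambda^-\ge 0$ on $V^-$.

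The next step is to compute the matrix of $J_\lambda^+$ on $V^+$ in the basis $\{\psi_+(\cdot,\overline\lambda),\psi_+(\cdot,\lambda)\}$; these two functions are linearly independent because $\lambda\notin\R$, and $\psi_+(\cdot,\overline\lambda)=\overline{\psi_+(\cdot,\lambda)}$ on $[0,1]$ since $\ell$ has real coefficients. The ingredients are: the norm $\int_0^1|\psi_+(\cdot,\lambda)|^2\,dx=\im m_+(\lambda)/\im\lambda$ from \eqref{eq:psinorm}; the Cauchy data of $\psi_+$ at $x=0$, namely $\psi_+(0,\lambda)=-m_+(\lambda)$ and $(|r|^{-1}\psi_+')(0,\lambda)=1$, read off from \eqref{eq:fs} and \eqref{eq:weyl_s}; the Lagrange identity \eqref{eq:green} restricted to $\cI_+$, together with $\ell[\psi_+(\cdot,z)]=z\,\psi_+(\cdot,z)$ and $(|r|^{-1}\psi_+')(1,z)=0$, which expresses $\gt^+[\psi_+(\cdot,z),\psi_+(\cdot,w)]$ in terms of $m_+$ and of the off-diagonal product $\int_0^1\psi_+(\cdot,\lambda)\psi_+(\cdot,\overline\lambda)\,dx=\int_0^1\psi_+(\cdot,\lambda)^2\,dx$; and the evaluation of this last integral as $m_+'(\lambda)$ (again from \eqref{eq:green} and a passage to the limit). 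Substituting into $J_\lambda^+=|\lambda|^2(\cdot,\cdot)-2\re\lambda\,\gt^+[\cdot,\cdot]+(\ell[\cdot],\ell[\cdot])$ and using the elementary identity $|\lambda|^2-2(\re\lambda)\lambda+\lambda^2=0$, every term involving $m_+'(\lambda)$ drops out and the matrix collapses to exactly $2M_+(\lambda)$. The identical computation on $\cI_-$, where instead $\psi_-(0,\lambda)=m_-(\lambda)$ and $(|r|^{-1}\psi_-')(0,\lambda)=1$, yields $2M_-(\lambda)$.

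Since $2>0$, it follows that $J_\lambda^\pm\ge 0$ on $V^\pm$ precisely when $M_\pm(\lambda)\ge 0$, and combining this with the reduction of the first paragraph proves the lemma. I expect the Gram-matrix computation of the second paragraph to be the main obstacle: one must track carefully which slot of the sesquilinear form $J_\lambda$ carries the complex conjugate, keep the signs in the Cauchy data of $\psi_\pm$ at $x=0$ straight, and notice the cancellation $|\lambda|^2-2(\re\lambda)\lambda+\lambda^2=0$, which is exactly what removes the $m_\pm'(\lambda)$ contributions that enter the Gram matrix a priori but do not appear in $M_\pm$. The decoupling step is by contrast routine, being an immediate consequence of the disjointness of the supports of $\chi_+$ and $\chi_-$.
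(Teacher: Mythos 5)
Your proposal is correct and follows essentially the same route as the paper's proof: reduction to $\cN_\lambda\dotplus\cN_{\overline\lambda}$ via Lemma \ref{lem:j_lam}, decoupling over $\cI_+$ and $\cI_-$ by disjointness of supports, and explicit computation of the $2\times 2$ Gram matrices $2M_\pm(\lambda)$ from \eqref{eq:psinorm}, \eqref{eq:green} and the Cauchy data of $\psi_\pm$ at $0$. The cancellation $|\lambda|^2-2(\re\lambda)\lambda+\lambda^2=0$ that you single out is exactly what makes the paper's cross term \eqref{eq:03} free of $\int_0^1\psi_\pm(\cdot,\lambda)^2\,dx$, so the two computations coincide.
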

\begin{proof}
Firstly,  by \eqref{eq:defect_L}, each $f_\lambda\in \cN_{\lambda}(L_{\min})$ admits the representation 
\[
f_\lambda(x)=c_+\psi_+(x,\lambda)\chi_+(x) + c_-\psi_-(x,\lambda)\chi_-(x), \quad c_\pm\in\C. 
\]
Next observe that
\[
 J_\lambda(f_\lambda)=|c_+|^2 J_\lambda(\psi_+(\lambda)\chi_+)+|c_-|^2 J_\lambda(\psi_-(\lambda)\chi_-).
\]
By \eqref{eq:weyl_s} and \eqref{eq:green}, we get 
\[
\gt[\psi_\pm(\lambda)\chi_\pm]=m_\pm(\lambda) + \overline{\lambda}\frac{\im\, m_\pm(\lambda)}{\im\, \lambda} = \frac{\im (\lambda\, m_\pm(\lambda))}{\im\, \lambda},
\] 
and hence
\be\label{eq:t_pm} 
\gt[f_\lambda]=|c_+|^2\frac{\im (\lambda\, m_+(\lambda))}{\im\, \lambda}+|c_-|^2\frac{\im (\lambda\, m_-(\lambda))}{\im\,\lambda},\quad \lambda\in\C_+.
\ee
Next, we compute
\begin{align}
 J_\lambda(\psi_\pm(\lambda)\chi_\pm)&=2|\lambda|^2\frac{\im\, m_\pm(\lambda)}{\im\, \lambda}-2\re\, \lambda\frac{\im (\lambda m_\pm(\lambda))}{\im\, \lambda}\nn\\
&=2(\im\, \lambda\, \im\, m_\pm(\lambda)-\re\, \lambda\, \re\, m_\pm(\lambda))=-2\re (\lambda\, m_\pm(\lambda)),\label{eq:01}
\end{align}
and
\begin{align}
 J_\lambda(\psi_\pm(\bar{\lambda})\chi_\pm)&=2|\lambda|^2\frac{\im\, m_\pm(\bar{\lambda})}{\im\, \bar{\lambda}}-2\re\, \lambda\frac{\im (\bar{\lambda}\, m_\pm(\bar{\lambda}))}{\im\, \bar{\lambda}}\nn\\
&=2(\im\, \lambda\, \im\, m_\pm(\lambda)-\re\, \lambda\, \re\, m_\pm(\lambda))=-2\re (\lambda m_\pm(\lambda)).\label{eq:02}
\end{align}
Finally, one easily obtains  
\be\label{eq:03}
 J_\lambda(\psi_\pm(\lambda),\psi_\pm(\bar{\lambda}))=-2\re\, \lambda\, m_\pm(\lambda).
\ee
Therefore, using \eqref{eq:t_pm}--\eqref{eq:03} and setting 
\be\label{eq:f_pm}
f_\pm=c_1^\pm\psi_\pm(x,\lambda)\chi_\pm(x)+c_1^\pm\psi_\pm(x,\bar{\lambda})\chi_\pm(x),
\ee
 we get
\be\label{eq:Jf_pm}
J_\lambda(f_\pm)=\big(2M_\pm(\lambda)C_\pm,C_\pm\big),
\ee
where $C_\pm:={\rm col}(c_1^\pm,c_2^\pm)\in\C^2$. 
Hence, $J_\lambda$ is nonnegative on $\cN_\lambda\dotplus\cN_{\bar{\lambda}}$ precisely when both $M_+(\lambda)$ and $M_-(\lambda)$ are nonnegative. Lemma \ref{lem:j_lam} completes the proof.
\end{proof}
\begin{corollary}[Everitt]
There is $K>0$ such that \eqref{eq:volkmer} is satisfied for all $f\in \gD_{\max}$  if and only if 
\be\label{eq:ev_mf}
-\im(\lambda^2m_\pm(\lambda))\ge 0\quad \text{for all} \quad\lambda\in\C_+\quad \text{with}\quad \frac{|\re\lambda|}{|\lambda|}=\frac{1}{K}.
\ee
\end{corollary}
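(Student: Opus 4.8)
The plan is to read the corollary off Lemma \ref{lem:3.1} and Lemma \ref{lem:dmax}, together with an elementary description of when the $2\times 2$ Hermitian matrices $M_\pm(\lambda)$ are nonnegative. By Lemma \ref{lem:3.1}, the inequality \eqref{eq:volkmer} holds for all $f\in\gD_{\max}$ with constant $K$ if and only if the form $J_\lambda$ is nonnegative on $\gD_{\max}$ for every $\lambda$ on the two rays $\{\lambda\in\C_+:\ \re\lambda/|\lambda|=\pm 1/K\}$, and by Lemma \ref{lem:dmax} this holds for a given $\lambda$ precisely when both $M_+(\lambda)\ge 0$ and $M_-(\lambda)\ge 0$. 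Hence the whole task reduces to showing that, for $\lambda\in\C_+$ with $|\re\lambda|/|\lambda|=1/K$, one has $M_\pm(\lambda)\ge 0$ if and only if $-\im(\lambda^2 m_\pm(\lambda))\ge 0$.

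For this I would first note that $M_\pm(\lambda)$ is Hermitian with equal diagonal entries $a=-\re(\lambda m_\pm(\lambda))\in\R$ and off-diagonal entry $b=-\re\lambda\,\overline{m_\pm(\lambda)}$, so its eigenvalues are $a\pm|b|=-\re(\lambda m_\pm(\lambda))\pm|\re\lambda|\,|m_\pm(\lambda)|$; thus $M_\pm(\lambda)\ge 0$ is equivalent to $\mathrm{tr}\,M_\pm(\lambda)=-2\re(\lambda m_\pm(\lambda))\ge 0$ together with $\det M_\pm(\lambda)\ge 0$. A direct computation, expanding everything in terms of $\re\lambda,\im\lambda,\re m_\pm(\lambda),\im m_\pm(\lambda)$, yields the identity
\[
\det M_\pm(\lambda)=-\im m_\pm(\lambda)\cdot\im\big(\lambda^2 m_\pm(\lambda)\big),\qquad \lambda\in\C_+,
\]
and since $m_\pm$ is a Herglotz function, $\im m_\pm(\lambda)>0$ on $\C_+$; therefore $\det M_\pm(\lambda)\ge 0$ is exactly the condition $-\im(\lambda^2 m_\pm(\lambda))\ge 0$.

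It then remains to check that on the two rays $|\re\lambda|/|\lambda|=1/K$ the determinant condition already forces $\mathrm{tr}\,M_\pm(\lambda)\ge 0$. Writing $\lambda=\rho\E^{\I\theta}$ with $|\cos\theta|=1/K$ (so that $\theta$ or $\pi-\theta$ lies in $(0,\tfrac\pi2)$) and $m_\pm(\lambda)=|m_\pm(\lambda)|\E^{\I\theta_m}$ with $\theta_m\in(0,\pi)$, the inequality $\im(\lambda^2 m_\pm(\lambda))\le 0$ confines $\arg(\lambda^2 m_\pm(\lambda))=2\arg\lambda+\theta_m$ to $[\pi,2\pi]$, and from this together with $\theta\in(0,\tfrac\pi2)$ one reads off that $\arg(\lambda m_\pm(\lambda))=\arg\lambda+\theta_m\in(\tfrac\pi2,\tfrac{3\pi}{2})$, i.e. $\re(\lambda m_\pm(\lambda))\le 0$. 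Hence $M_\pm(\lambda)\ge 0\iff -\im(\lambda^2 m_\pm(\lambda))\ge 0$ on these rays, which, combined with the first step, gives the equivalence; the formula for the best constant then follows by taking the infimum over admissible $\theta$, exactly as in Theorem \ref{th:ev71}.

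I expect the genuinely delicate point to be this last step: a priori $\det M_\pm(\lambda)\ge 0$ only yields $|\re(\lambda m_\pm(\lambda))|\ge|\re\lambda|\,|m_\pm(\lambda)|$, so one must rule out the wrong-sign branch, and it is precisely here that one uses that $\lambda$ ranges over the boundary rays of a proper sector $\Gamma_\theta$ with $\theta<\tfrac\pi2$ rather than over all of $\C_+$ (for $\theta=\tfrac\pi2$ the "rays" degenerate and the statement must be read as the assertion that \eqref{eq:volkmer} fails on $\gD_{\max}$). The determinant identity, while routine, is the computational bridge between the matrix positivity and Everitt's condition and should be recorded explicitly; everything else is bookkeeping with arguments of complex numbers and the Herglotz property of $m_\pm$.
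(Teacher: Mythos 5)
Your proof is correct and takes essentially the paper's own route: reduce via Lemma \ref{lem:3.1} and Lemma \ref{lem:dmax} to the nonnegativity of $M_\pm(\lambda)$ on the two rays, then use the identity $\det M_\pm(\lambda)=-\im\, m_\pm(\lambda)\,\im(\lambda^2 m_\pm(\lambda))$, which is exactly the computation the paper records. The only addition is that you explicitly check (using the Herglotz bound $\arg m_\pm\in(0,\pi)$) that $\det M_\pm(\lambda)\ge 0$ forces $-\re(\lambda m_\pm(\lambda))\ge 0$ on rays in $\C_+$ — a point the paper's one-line proof leaves implicit — and this is fine; just note that the ray with $\arg\lambda\in\big(\tfrac{\pi}{2},\pi\big)$ requires the same (equally easy) angle bookkeeping as the case $\arg\lambda\in\big(0,\tfrac{\pi}{2}\big)$ you wrote out.
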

\begin{proof}
Straightforward calculations yield
\[
\det M_\pm(\lambda)=-\im\, m_\pm(\lambda)\, \im(\lambda^2m_\pm(\lambda)).
\]
Lemmas \ref{lem:j_lam} and \ref{lem:dmax} complete the proof.
\end{proof}
Now we are ready to complete the proof of Theorem \ref{th:crit_mf}.

\begin{proof}[Proof of Theorem \ref{th:crit_mf}]
Firstly, observe that $f\in \dom(A)$ if and only if $f\in \gD_{\max}$ and satisfies the boundary conditions \eqref{eq:bc}. 
Further, note that $f\in \gD_{\max}$ admits the representation $f=f_0+f_+ + f_-$, where $f_0\in\gD_{\min}$ and $f_\pm$ are given by \eqref{eq:f_pm}. Hence, by \eqref{eq:weyl_s}, $f\in\dom(A)$ precisely when
\[
\begin{cases}
-c_1^+m_+(\lambda)-c_2^+\overline{m_+(\lambda)}=c_1^-m_-(\lambda)+c_2^-\overline{m_-(\lambda)}\\
-c_1^+ - c_2^+ = c_1^- + c_2^-
\end{cases}
\]
or, equivalently, if $C_\pm=\rm{col}(c_1^\pm,c_2^\pm)\in\C^2$ satisfy
\be\label{eq:d_pmA}
-D_+(\lambda)C_+=D_-(\lambda)C_-,\quad 
D_\pm(\lambda):=\left(\begin{array}{cc}
m_\pm(\lambda) &  \overline{m_\pm(\lambda)}\\
1 & 1
\end{array}\right).
\ee

Notice that $\det D_\pm(\lambda)=2\I\, \im\, m_\pm(\lambda)\neq 0$ if $\lambda\in \C_+$, and hence 
\be\label{eq:d_inv}
D_\pm^{-1}(\lambda)=\frac{1}{m_\pm(\lambda)-{m_\pm(\overline\lambda)}}\left(\begin{array}{cc}
1 &  -{m_\pm(\overline\lambda)}\\
-1 & m_\pm(\lambda)
\end{array}\right),\quad \lambda\in\C_+.
\ee
Therefore, using \eqref{eq:Jf_pm} and \eqref{eq:d_pmA}, we obtain for $f\in\dom(A)$ 
\begin{eqnarray*}
&J_\lambda(f)=J_\lambda(f_+) + J_\lambda(f_-)=2\big(M_+(\lambda)C_+,C_+\big)+2\big(M_-(\lambda)C_-,C_-\big)\\
&=2\big((M_+(\lambda)+D_+(\lambda)^*D_-(\lambda)^{-*}M_-(\lambda)D_-(\lambda)^{-1}D_+(\lambda))C_+,C_+\big).
\end{eqnarray*}
Hence $J_\lambda$ is nonnegative for all $f\in\dom(A)$ if and only if the matrix 
\[
D_+(\lambda)^{-*}M_+(\lambda)D_+(\lambda)^{-1}+D_-(\lambda)^{-*}M_-(\lambda)D_-(\lambda)^{-1}
\]
is nonnegative. Notice that $M_A(\lambda)=\tilde{M}_+(\lambda)+\tilde{M}_-(\lambda)$, where 
\[
\tilde{M}_\pm(\lambda):=4D_\pm(\lambda)^{-*}M_\pm(\lambda)D_\pm(\lambda)^{-1}
=\frac{1}{\im m_\pm(\lambda)}\left(\begin{array}{cc}
\im\lambda &  -\im(\lambda m_\pm(\lambda))\\
-\im(\lambda m_\pm(\lambda)) & \im\lambda |m_\pm(\lambda)|^2
\end{array}\right).
\]
Hence Lemma \ref{lem:3.1} completes the proof.
\end{proof}

\section{Volkmer's inequality: the second criterion}\label{sec:vol_II}
In this section we are going to prove the main result of our paper:
\begin{theorem}\label{th:criterion}
Let $m_+$ and $m_-$ be the $m$-functions \eqref{eq:weyl_s}. Then the Volkmer inequality \eqref{eq:volkmer} is valid if and only if
\be\label{eq:crit_imaginary}
\sup_{y> 0}\frac{\re( m_+(\I y) +  m_-(\I y))}{|m_+(\I y)-m_-(-\I y)|}<+\infty.
\ee
\end{theorem}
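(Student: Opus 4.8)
The strategy is to deduce Theorem~\ref{th:criterion} from the first criterion, Theorem~\ref{th:crit_mf}, exactly as Theorem~\ref{th:ak_crit+} was deduced from the Everitt criterion in Section~\ref{sec:help}: we must show that the condition \eqref{eq:m_cond}, which a priori requires control of $M_A(\lambda)$ on a whole pair of rays in $\C_+$, can be checked along the imaginary semi-axis alone. First I would reformulate \eqref{eq:m_cond}. By Lemma~\ref{lem:3.1} applied in the form of Lemma~\ref{lem:j_lam}, \eqref{eq:volkmer} is valid iff the $2\times2$ matrix $M_A(\lambda)$ is nonnegative on two symmetric rays $\re\lambda/|\lambda|=\pm\cos\theta$; writing the defining formula \eqref{eq:ti_m}, \eqref{eq:ma} out, the $(1,1)$ entry is $\im\lambda\,(\frac1{\im m_+}+\frac1{\im m_-})>0$ automatically for $\lambda\in\C_+$, so $M_A(\lambda)\ge 0$ iff $\det M_A(\lambda)\ge 0$. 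Thus the whole question reduces to the sign of $\det M_A(\lambda)$ on these rays, and on $\I\R_+$ one computes $\det M_A(\I y)$ directly: with $m_\pm(\I y)=u_\pm+\I v_\pm$ ($v_\pm>0$) it is a rational expression in $u_\pm,v_\pm,y$ which, after simplification, has the same sign as $1/K^2-\big(\re(m_++m_-)/|m_+-m_-(-\I y)|\big)^2$ (here $\overline{m_\pm(\I y)}=m_\pm(-\I y)$). That is the computation that produces the quantity in \eqref{eq:crit_imaginary}, and it also yields the best-constant statement analogue: $\sup_{y>0}\re(m_++m_-)/|m_+-m_-(-\I y)|$ is a lower bound for the optimal $K$.

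For \emph{necessity} of \eqref{eq:crit_imaginary}, the clean route is the Weyl-solution argument from the proof of Theorem~\ref{th:ak_crit+}: assuming \eqref{eq:volkmer} holds, substitute into it a suitable element of $\dom(A)$ built from the Weyl solutions $\psi_\pm(x,\I y)\chi_\pm(x)$ and $\psi_\pm(x,-\I y)\chi_\pm(x)$ — precisely the functions $f=f_++f_-$ of \eqref{eq:f_pm} with coefficients $C_\pm$ chosen to satisfy the matching conditions \eqref{eq:d_pmA} at $x=0$. Using \eqref{eq:psinorm}, \eqref{eq:weyl_s}, \eqref{eq:green} to evaluate $\|f\|^2$, $\|\ell[f]\|^2$ and $\gt[f]$ in terms of $m_\pm(\I y)$, the inequality \eqref{eq:volkmer} for this one-parameter family of test functions collapses exactly to $\re(m_+(\I y)+m_-(\I y))\le K\,|m_+(\I y)-m_-(-\I y)|$, i.e. \eqref{eq:crit_imaginary}. (Equivalently, one can simply observe that $\det M_A(\I y)\ge 0$ for all $y>0$ is a necessary consequence of \eqref{eq:m_cond} and unwind the determinant; the Weyl-solution version is just the concrete face of this.)

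For \emph{sufficiency}, assume \eqref{eq:crit_imaginary} holds but \eqref{eq:volkmer} fails; I must derive a contradiction. By Theorem~\ref{th:crit_mf}, failure of \eqref{eq:volkmer} means $\det M_A(\lambda)<0$ at points $\lambda$ on rays arbitrarily close to $\I\R_+$, i.e. there is a sequence $\lambda_j=(k_j+\I)y_j\in\C_+$ with $k_j\downarrow 0$ and $\det M_A(\lambda_j)<0$. Since the criterion holds on $\I\R_+$, I need to transfer the bad behaviour at $\lambda_j$ to $\I y_j$; as in Lemma~\ref{lem:m_assympt} this requires knowing $y_j\to\infty$ (so that the relevant $m$-function estimates kick in), which follows because $m_\pm$ are Herglotz with poles at $0$, hence $\lambda_j$ can only accumulate at $\infty$. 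Then I would use the Herglotz representation \eqref{eq:m_repr} of $m_\pm$ to compare $m_\pm(\lambda_j)$ with $m_\pm(\I y_j)$, getting estimates of the type $|\,\re m_\pm(\lambda_j)-\re m_\pm(\I y_j)\,|\le Ck_j\,\im m_\pm(\lambda_j)$ and similarly for the imaginary parts and for $|m_+-m_-|$, exactly as in the $O(k_j)$-bounds \eqref{eq:5.07}, \eqref{eq:5.08}; feeding these into $\det M_A(\lambda_j)<0$ and letting $j\to\infty$ forces $\re(m_+(\I y_j)+m_-(\I y_j))/|m_+(\I y_j)-m_-(-\I y_j)|\to\infty$, contradicting \eqref{eq:crit_imaginary}.

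\textbf{Main obstacle.} The routine part is the algebra of $\det M_A$; the genuinely delicate step is the sufficiency transfer from a near-imaginary ray back to $\I\R_+$. Unlike the scalar HELP case, here $\det M_A$ couples $m_+$ and $m_-$ nonlinearly (through $|m_+-m_-(-\I y)|^2$ in the denominator), so one must ensure the perturbation estimates are simultaneously uniform for both $m_+$ and $m_-$ and that the denominator $|m_+(\lambda_j)-m_-(\overline{\lambda_j})|$ stays comparable to $|m_+(\I y_j)-m_-(-\I y_j)|$ as $k_j\to 0$ — in particular that it does not degenerate faster than the numerator. Handling this cancellation carefully, most likely via the integral representation \eqref{eq:m_repr} applied to $m_+-m_-$ as a single Herglotz-type object, is where the real work lies; the extraction of the sequence $\{\lambda_j\}$ from failure of \eqref{eq:m_cond} is a soft argument once one knows $M_A$ is a continuous matrix function off $\R$ and that nonnegativity fails on every pair of rays.
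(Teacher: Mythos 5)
The gap is in your ``necessity'' direction (Volkmer valid $\Rightarrow$ \eqref{eq:crit_imaginary}), which is precisely the hard half of the paper's proof. Your fallback observation is simply false: by Lemma \ref{lem:det_a} with $K=0$ one has $\tilde M_A(\I y)=|m_+(\I y)-\overline{m_-(\I y)}|^2\ge 0$, i.e.\ $\det M_A(\I y)=\im m_+(\I y)\,\im m_-(\I y)\,|m_+(\I y)-m_-(-\I y)|^2\ge 0$ holds \emph{unconditionally}, for every weight. The imaginary axis corresponds to the degenerate value $\re\lambda/|\lambda|=0$ (``$K=\infty$'' in Lemma \ref{lem:3.1}), so nonnegativity of $M_A$ there carries no information whatsoever, and your claim that $\det M_A(\I y)$ ``has the same sign as $1/K^2-\big(\re(m_++m_-)/|m_+-m_-(-\I y)|\big)^2$'' cannot be right. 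The Weyl-solution version fares no better: a two-term combination $c_+\psi_+(\cdot,\I y)\chi_+ + c_-\psi_-(\cdot,-\I y)\chi_-$ generically violates the interface conditions \eqref{eq:bc}, so you must use the full four-term family \eqref{eq:f_pm} constrained by \eqref{eq:d_pmA}; but then evaluating $\|f\|^2$, $\gt[f]$, $\|\ell[f]\|^2$ \emph{separately} (as a test-function argument requires) produces cross terms such as $\int_0^1\psi_\pm(x,\I y)^2dx$, which are expressed through $m_\pm'(\I y)$ and are not controlled by $m_\pm(\I y)$ alone --- unlike the single-Weyl-solution computation in Theorem \ref{th:ak_crit+}, where no matching condition mixes $\cN_\lambda$ with $\cN_{\bar\lambda}$. (The whole point of the $J_\lambda$-formalism in Section \ref{sec:vol_I} is that these derivative terms cancel only in the combination $J_\lambda$, and only when the spectral parameter of the form equals that of the Weyl solutions, which is never purely imaginary on the relevant rays.) So the claimed ``collapse'' to $\re(m_+(\I y)+m_-(\I y))\le K|m_+(\I y)-m_-(-\I y)|$ is unsubstantiated.

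The missing idea is the content of Lemma \ref{lem:5.4}: since Theorem \ref{th:crit_mf} only asks for nonnegativity of $M_A$ on \emph{some} pair of rays, to prove that failure of \eqref{eq:crit_imaginary} kills \eqref{eq:volkmer} one must show that a bad, asymptotically imaginary sequence $\lambda_n$ poisons \emph{every} ray $\Gamma_K$. The paper does this with the Bennewitz rescaling machinery of the Appendix: from $\im m_\pm(\lambda_n)=o(\re m_\pm(\lambda_n))$ one deduces that the limiting rescaled spectral function is a Dirac mass, hence $m_\pm(\mu\rho_n)/(a_\pm f_\pm(\rho_n))$ lies asymptotically in explicit Weyl circles uniformly for $\mu$ in compacta, and the elementary circle inequality \eqref{eq:5.05} then yields, for each $K>0$, heights $y_K\rho_n$ at which $\tilde M_A<0$. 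Nothing in your proposal substitutes for this step; without it you have only (after fixing the points above) the implication ``\eqref{eq:crit_imaginary} $\Rightarrow$ \eqref{eq:volkmer}'', whose outline (contraposition via Theorem \ref{th:crit_mf}, extraction of a sequence with $k_n\downarrow0$, $y_n\to\infty$, and transfer to the imaginary axis by the Herglotz estimates \eqref{eq:5.07}--\eqref{eq:5.08}) does match the paper; incidentally, the denominator issue you flag as the main obstacle is handled cheaply there via $\im m_\pm(\lambda_n)\le\im m_+(\lambda_n)+\im m_-(\lambda_n)\le|m_+(\lambda_n)-\overline{m_-(\lambda_n)}|$, so $|m_\pm(\lambda_n)-m_\pm(\I y_n)|\le 4k_n|m_+(\lambda_n)-\overline{m_-(\lambda_n)}|$, and no separate Herglotz analysis of $m_+-m_-$ is needed.
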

Before proving Theorem \ref{th:criterion} we need several preliminary lemmas.
\begin{lemma}\label{lem:det_a}
Let the matrix--function $M_A$ be given by \eqref{eq:ti_m}--\eqref{eq:ma} and define
\be\label{eq:ti_ma2}
\tilde{M}_A(\lambda):=\frac{\det\, M_A(\lambda)}{\im\, m_+(\lambda)\im\, m_-(\lambda)},\quad \lambda\in\C_+.
\ee
 Then for $\lambda=(K+\I)y$, $y>0$, $K\in \R$,
\be\label{eq:ti_ma}
\tilde{M}_A(\lambda)=\big| m_+(\lambda)-\overline{m_-(\lambda)}\big|^2 -4K\, \im \big( m_+(\lambda)\, m_-(\lambda)\big) -4K^2\, \im\, m_+(\lambda) \im\, m_-(\lambda).
\ee
\end{lemma}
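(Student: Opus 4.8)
The plan is to prove Lemma \ref{lem:det_a} by direct computation of the $2\times 2$ determinant $\det M_A(\lambda)$ and then simplifying. Recall from \eqref{eq:ti_m}--\eqref{eq:ma} that
$M_A = \tilde M_+ + \tilde M_-$, where each $\tilde M_\pm$ is $\frac{1}{\im m_\pm}$ times the matrix with entries $\im\lambda$ (top-left), $-\im(\lambda m_\pm)$ (off-diagonal), and $\im\lambda\,|m_\pm|^2$ (bottom-right). So $M_A(\lambda)$ is again a real symmetric $2\times 2$ matrix; write it as $\left(\begin{smallmatrix} a & b \\ b & c\end{smallmatrix}\right)$ with
$a = \im\lambda\bigl(\tfrac{1}{\im m_+} + \tfrac{1}{\im m_-}\bigr)$,
$b = -\bigl(\tfrac{\im(\lambda m_+)}{\im m_+} + \tfrac{\im(\lambda m_-)}{\im m_-}\bigr)$,
$c = \im\lambda\bigl(\tfrac{|m_+|^2}{\im m_+} + \tfrac{|m_-|^2}{\im m_-}\bigr)$.

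\medskip

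Next I would compute $\det M_A = ac - b^2$, clear the common denominator $\im m_+\,\im m_-$ (which is exactly the normalization in \eqref{eq:ti_ma2}), and collect terms. Writing $m_\pm = u_\pm + \I v_\pm$ with $v_\pm = \im m_\pm > 0$ and $\lambda = (K+\I)y$, one has $\im\lambda = y$, $\im(\lambda m_\pm) = y u_\pm + K y v_\pm$. The quantity $\tilde M_A = (ac-b^2)\,\im m_+\,\im m_-$ then expands into: (i) a part coming from $ac$, equal to $y^2\bigl(|m_+|^2 v_- + |m_-|^2 v_+\bigr)\bigl(v_+ + v_-\bigr)/(v_+v_-)$ times $v_+v_-$, i.e. $y^2\bigl(|m_+|^2 v_- + |m_-|^2 v_+\bigr)(v_+ + v_-)$; and (ii) minus a part from $b^2\,v_+v_-$, namely $\bigl(v_-\im(\lambda m_+) + v_+\im(\lambda m_-)\bigr)^2/(v_+v_-)$ times $v_+v_-$... wait — more carefully, $b^2 v_+ v_- = \bigl(v_-\,\im(\lambda m_+) + v_+\,\im(\lambda m_-)\bigr)^2 / (v_+ v_-)$, so one should instead track $\tilde M_A$ as a genuine polynomial expression. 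The cleanest route is to substitute $\im(\lambda m_\pm) = y(u_\pm + K v_\pm)$ everywhere, so $\tilde M_A$ becomes $y^2$ times a polynomial in $u_\pm, v_\pm, K$; then group the result by powers of $K$. The $K^0$ term should reduce to $|m_+ - \overline{m_-}|^2 = (u_+-u_-)^2 + (v_+ + v_-)^2$ (note $\overline{m_-} = u_- - \I v_-$); the $K^1$ term to $-4\,\im(m_+ m_-) = -4(u_+ v_- + u_- v_+)$; and the $K^2$ term to $-4 v_+ v_- = -4\,\im m_+\,\im m_-$. Factoring out the overall $y^2$ to match the stated right-hand side (where the expressions $|m_+(\lambda)-\overline{m_-(\lambda)}|^2$, $\im(m_+m_-)$, $\im m_\pm$ are all evaluated at $\lambda = (K+\I)y$, not rescaled) requires a small bookkeeping check that the $y^2$ cancels correctly against the definition of $\tilde M_A$ in \eqref{eq:ti_ma2}; I would verify this by checking the homogeneity degrees on both sides.

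\medskip

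The only real obstacle is the algebra: one must carry the expansion of $ac - b^2$ with enough care to see the $K$-grading emerge cleanly, and in particular to recognize the $K^0$ coefficient as the perfect combination $(u_+-u_-)^2+(v_++v_-)^2$. A useful sanity check at $K=0$ (i.e. $\lambda = \I y$): there $\im(\lambda m_\pm) = y u_\pm$, so $b = -(u_+ + u_-)$, $a = y(1/v_+ + 1/v_-)$, $c = y(|m_+|^2/v_+ + |m_-|^2/v_-)$, and $\tilde M_A = y^2\bigl[(|m_+|^2 v_- + |m_-|^2 v_+)(v_+ + v_-)/(v_+ v_-)\cdot v_+ v_-\bigr]/1 - (u_++u_-)^2 v_+ v_-$... — rather, $ac\cdot v_+v_- - b^2 v_+ v_- = y^2(|m_+|^2 v_- + |m_-|^2 v_+)(v_+ + v_-) - (u_++u_-)^2 v_+ v_-$. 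Since $|m_\pm|^2 = u_\pm^2 + v_\pm^2$, expanding and combining shows this equals $y^2\bigl[(u_+-u_-)^2 + (v_++v_-)^2\bigr]v_+v_-$ after the dust settles (the cross terms $2u_+u_- v_+ v_-$ from $-(u_++u_-)^2v_+v_-$ cancel against part of the first product, leaving $(u_+-u_-)^2$). This confirms the $K^0$ term and the role of the $y^2$ factor; the general $K$ case follows by the same expansion with $u_\pm$ replaced by $u_\pm + Kv_\pm$ inside the $\im(\lambda m_\pm)$ terms only, and collecting powers of $K$ as described.
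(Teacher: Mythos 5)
Your overall strategy is exactly the paper's: expand the $2\times 2$ determinant $\det M_A=ac-b^2$, clear the factor $\im m_+\im m_-$, and collect powers of $K$, with the grading $K^0\mapsto|m_+-\overline{m_-}|^2$, $K^1\mapsto-4\im(m_+m_-)$, $K^2\mapsto-4\im m_+\im m_-$; that grading is indeed correct. However, the one computation you actually carry out — the $K=0$ sanity check — is mis-executed. With $a,b,c$ as you defined them and $\lambda=\I y$, the off-diagonal entry is $b=-y\bigl(u_+/v_++u_-/v_-\bigr)$ (you dropped both the factor $y$ and the weights $1/v_\pm$), so after clearing denominators the subtracted square is $\bigl(v_-u_++v_+u_-\bigr)^2$, not $v_+v_-(u_++u_-)^2$. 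The identity that actually closes is $(v_++v_-)\bigl(|m_+|^2v_-+|m_-|^2v_+\bigr)-\bigl(v_-u_++v_+u_-\bigr)^2=v_+v_-\bigl[(u_+-u_-)^2+(v_++v_-)^2\bigr]$, and its general-$K$ analogue, with $u_\pm$ replaced by $u_\pm+Kv_\pm$ inside the square, yields the full right-hand side of \eqref{eq:ti_ma} multiplied by $v_+v_-$. As written, your check does not verify anything, because the two terms you compare carry different powers of $y$ and the wrong cross-term structure.

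The second issue is the prefactor you defer to a "homogeneity check": it does not cancel the way you hope. Carrying the literal entries of \eqref{eq:ti_m}--\eqref{eq:ma} through, one gets $\det M_A(\lambda)=\frac{y^2}{(\im m_+\im m_-)^2}\Bigl[(v_++v_-)\bigl(|m_+|^2v_-+|m_-|^2v_+\bigr)-\bigl(v_-(u_++Kv_+)+v_+(u_-+Kv_-)\bigr)^2\Bigr]$, so $\tilde M_A$ as defined in \eqref{eq:ti_ma2} equals $\frac{y^2}{(\im m_+\im m_-)^2}$ times the right-hand side of \eqref{eq:ti_ma}. The paper's own proof tacitly works with renormalized entries ($M_{11}=\im m_++\im m_-$, etc., i.e.\ each entry of $M_A$ multiplied by $\im m_+\im m_-/\im\lambda$), so \eqref{eq:ti_ma} is to be read modulo this strictly positive factor — harmless for every later use of the lemma, since only the sign of $\tilde M_A$ enters the subsequent arguments, but this must be stated and resolved explicitly rather than left as an unverified bookkeeping step. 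Fix the $b$-entry in your check and record the positive factor $y^2/(\im m_+\im m_-)^2$, and your argument becomes the paper's proof.
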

\begin{proof}
The proof follows from lengthy but straightforward calculations. Namely, 
set $\lambda=(K+\I) y$. Note that 
\[
M_A(\lambda)=\begin{pmatrix}
M_{11}(\lambda) & M_{12}(\lambda)\\
M_{12}(\lambda) & M_{22}(\lambda)
\end{pmatrix},
\]
where
\begin{align*}
M_{11}(\lambda)=
&\im\, m_+(\lambda)+\im\, m_-(\lambda),\quad M_{22}(\lambda)=\im\, m_+(\lambda)|m_-(\lambda)|^2+\im\, m_-(\lambda)|m_+(\lambda)|^2,\\
&M_{12}(\lambda)= -\im\, m_+(\lambda)\im ((K+\I)m_-(\lambda))-\im m_-(\lambda)\im ((K+\I)m_+(\lambda)).
\end{align*}
Therefore, 
\begin{eqnarray*}
\det M_A(\lambda)&=&(\im\, m_+(\lambda)+\im\, m_-(\lambda))(\im\, m_+(\lambda)|m_-(\lambda)|^2+\im\, m_-(\lambda)|m_+(\lambda)|^2)\nn\\
&-&(\im\, m_+(\lambda)\im ((K+\I)m_-(\lambda))+\im\, m_-(\lambda)\im ((K+\I)m_+(\lambda)))^2.
\end{eqnarray*}

Further, note that
\begin{eqnarray*}
&|m_\pm(\lambda)|^2-(\im((K+\I)m_\pm(\lambda)))^2=(\re\, m_\pm(\lambda))+(\im \, m_\pm(\lambda))^2
-K^2(\im\, m_\pm(\lambda))^2-(\re\, m_\pm(\lambda))^2\\
& -2K\im\, m_\pm(\lambda)\re\, m_\pm(\lambda)=\im\, m_\pm(\lambda)\big[ (1-K^2) \im\, m_\pm(\lambda) -2K \re\, m_\pm(\lambda)\big],
\end{eqnarray*}
and
\begin{eqnarray*}
&\im ((K+\I)m_+(\lambda))\cdot\im ((K+\I)m_-(\lambda))=(K\im\, m_+(\lambda)+\re\, m_+(\lambda))(K\im\, m_-(\lambda)
+\re\, m_-(\lambda))\\
&=K^2\im\, m_+(\lambda)\im\, m_-(\lambda)+K(\im\, m_+(\lambda)\re\, m_-(\lambda)+\im\, m_-(\lambda)\re\, m_+(\lambda))+\re\, m_+(\lambda)\re\, m_-(\lambda).
\end{eqnarray*}
Therefore,
\begin{eqnarray*}
\tilde{M}_A(\lambda)&=&|m_+(\lambda)|^2+|m_-(\lambda)|^2
+\im\, m_+(\lambda)\big[ (1-K^2) \im\, m_-(\lambda) -2K \re\, m_-(\lambda)\big]\\
&+&\im\, m_-(\lambda)\big[ (1-K^2) \im\, m_+(\lambda) -2K \re\, m_+(\lambda)\big] -2K^2\im\, m_+(\lambda)\im\, m_-(\lambda) \\
&-&2K(\im\, m_+(\lambda)\re\, m_-(\lambda) +\im\, m_-(\lambda)\re\, m_+(\lambda))-2\re\, m_+(\lambda)\re\, m_-(\lambda)\\
&=&(\im\, m_+(\lambda))^2+(\im\, m_-(\lambda))^2+2(1-2K^2)\im\, m_+(\lambda) \im\, m_-(\lambda)\\
&-&4K(\im\, m_+(\lambda)\re\, m_-(\lambda)+\im\, m_-(\lambda)\re\, m_+(\lambda))\\
& +&(\re\, m_+(\lambda))^2+(\re\, m_-(\lambda))^2-2\re\, m_+(\lambda)\re\, m_-(\lambda)\\
&=&(\im\, m_+(\lambda)+\im\, m_-(\lambda))^2 +(\re\, m_+(\lambda)-\re\, m_-(\lambda))^2 \\
&-& 4K^2\im\, m_+(\lambda) \im\, m_-(\lambda)-4K(\im\, m_+(\lambda)\re\, m_-(\lambda)+\im\, m_-(\lambda)\re\, m_+(\lambda))\\
&=&\big| m_+(\lambda)-\overline{m_-(\lambda)}\big|^2 -4K \im \big( m_+(\lambda)\, m_-(\lambda)\big) -4K^2\im\, m_+(\lambda) \im\, m_-(\lambda).
\end{eqnarray*}
\end{proof}
\begin{corollary}\label{cor:2side}
Let $\lambda=(K+\I)y\in\C_+$ and $\tilde{M}_A$ be given by \eqref{eq:ti_ma}. Then 
\begin{align}
(1-K^2)\big| m_+(\lambda)-\overline{m_-(\lambda)}\big|^2 &-4K \im \big( m_+(\lambda)\, m_-(\lambda)\big)\le \tilde{M}_A(\lambda)\label{eq:2side_A}\\
&\le \big| m_+(\lambda)-\overline{m_-(\lambda)}\big|^2 -4K \im \big( m_+(\lambda)\, m_-(\lambda)\big).\label{eq:2side_B}
\end{align}
\end{corollary}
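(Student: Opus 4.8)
The plan is to extract both inequalities directly from the closed formula
\[
\tilde{M}_A(\lambda)=\big| m_+(\lambda)-\overline{m_-(\lambda)}\big|^2 -4K\, \im \big( m_+(\lambda)\, m_-(\lambda)\big) -4K^2\, \im\, m_+(\lambda) \im\, m_-(\lambda)
\]
provided by Lemma \ref{lem:det_a}, valid for $\lambda=(K+\I)y$ with $y>0$. The whole point is that the three displayed terms behave in a controlled way once one remembers that $m_+$ and $m_-$ are Herglotz functions, so that $\im\, m_+(\lambda)>0$ and $\im\, m_-(\lambda)>0$ for $\lambda\in\C_+$; in particular the product $\im\, m_+(\lambda)\,\im\, m_-(\lambda)$ is strictly positive.

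For the upper bound \eqref{eq:2side_B} I would simply observe that $-4K^2\,\im\, m_+(\lambda)\,\im\, m_-(\lambda)\le 0$, since $K^2\ge 0$ and the product of imaginary parts is nonnegative. Dropping this term from the formula for $\tilde{M}_A(\lambda)$ can only increase the value, which is precisely \eqref{eq:2side_B}. No further computation is needed.

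For the lower bound \eqref{eq:2side_A} the one extra ingredient is the elementary estimate $4\,\im\, m_+(\lambda)\,\im\, m_-(\lambda)\le \big| m_+(\lambda)-\overline{m_-(\lambda)}\big|^2$. This is immediate from the identity $\big| m_+(\lambda)-\overline{m_-(\lambda)}\big|^2=(\re\, m_+(\lambda)-\re\, m_-(\lambda))^2+(\im\, m_+(\lambda)+\im\, m_-(\lambda))^2$ — which already appears inside the proof of Lemma \ref{lem:det_a} — together with $(\im\, m_+(\lambda)+\im\, m_-(\lambda))^2\ge 4\,\im\, m_+(\lambda)\,\im\, m_-(\lambda)$ (the AM–GM inequality, or equivalently $(\im\, m_+-\im\, m_-)^2\ge 0$). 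Hence
\[
-4K^2\,\im\, m_+(\lambda)\,\im\, m_-(\lambda)\ge -K^2\big| m_+(\lambda)-\overline{m_-(\lambda)}\big|^2,
\]
and substituting this lower bound for the last term in the formula for $\tilde{M}_A(\lambda)$ yields exactly \eqref{eq:2side_A}.

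I do not anticipate a genuine obstacle here: both inequalities are purely algebraic consequences of Lemma \ref{lem:det_a} and the Herglotz positivity of $m_\pm$. The only mild subtlety worth a sentence of care is making sure the sign conventions are right — that $K$ is real and $y>0$, so that $\lambda=(K+\I)y$ indeed lies in $\C_+$ and the Herglotz bounds $\im\, m_\pm(\lambda)>0$ apply — but this is exactly the hypothesis under which \eqref{eq:ti_ma} was derived, so the corollary follows at once.
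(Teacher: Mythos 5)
Your proposal is correct and follows essentially the same route as the paper: the upper bound \eqref{eq:2side_B} by dropping the nonpositive term $-4K^2\,\im\, m_+(\lambda)\,\im\, m_-(\lambda)$ from \eqref{eq:ti_ma}, and the lower bound \eqref{eq:2side_A} via $\big| m_+(\lambda)-\overline{m_-(\lambda)}\big|^2\ge (\im\, m_+(\lambda)+\im\, m_-(\lambda))^2\ge 4\,\im\, m_+(\lambda)\,\im\, m_-(\lambda)$, which is exactly the estimate the paper uses.
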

\begin{proof}
Inequality \eqref{eq:2side_B} clearly follows from \eqref{eq:ti_ma}. To prove \eqref{eq:2side_A} it suffices to note that
\[
\big| m_+(\lambda)-\overline{m_-(\lambda)}\big|^2\ge (\im\, m_+(\lambda)+\im \, m_-(\lambda))^2\ge 4\im\, m_+(\lambda) \im\, m_-(\lambda).
\]
\end{proof}

\begin{corollary}\label{cor:neg_k}
If $\lambda=(K+\I) y\in\C_+$ and $K\in [-1,0]$, then $\tilde{M}_A(\lambda)\ge 0$.
\end{corollary}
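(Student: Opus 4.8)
The plan is to read Corollary~\ref{cor:neg_k} off the lower bound \eqref{eq:2side_A} in Corollary~\ref{cor:2side}. For $\lambda=(K+\I)y$ with $K\in[-1,0]$ and $y>0$, that inequality states
\[
\tilde{M}_A(\lambda)\ge (1-K^2)\bigl|m_+(\lambda)-\overline{m_-(\lambda)}\bigr|^2-4K\,\im\bigl(m_+(\lambda)\,m_-(\lambda)\bigr),
\]
so it suffices to prove that the right-hand side is nonnegative. Since $K\in[-1,0]$ we have $1-K^2\ge 0$ and $-4K\ge 0$; hence both summands are nonnegative once we know that $\im\bigl(m_+(\lambda)\,m_-(\lambda)\bigr)\ge 0$. (Note that the upper bound $K\ge -1$ enters only to make $1-K^2\ge 0$; everything else below works for all $K\le 0$.)

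So the real content is the inequality $\im\bigl(m_+(\lambda)\,m_-(\lambda)\bigr)\ge 0$. Writing $m_\pm(\lambda)=\re m_\pm(\lambda)+\I\,\im m_\pm(\lambda)$ gives
\[
\im\bigl(m_+(\lambda)\,m_-(\lambda)\bigr)=\re m_+(\lambda)\,\im m_-(\lambda)+\im m_+(\lambda)\,\re m_-(\lambda),
\]
so it is enough to show $\im m_\pm(\lambda)>0$ and $\re m_\pm(\lambda)\ge 0$. The first is just the Herglotz property of $m_\pm$ recorded in \eqref{eq:psinorm}. For the second I would invoke the Stieltjes representation \eqref{eq:m_repr}: with $\lambda=(K+\I)y$,
\[
\re m_\pm(\lambda)=\int_{\R_+}\frac{s-Ky}{(s-Ky)^2+y^2}\,d\tau_\pm(s),
\]
and the integrand is nonnegative because $s\ge 0$ and $-Ky\ge 0$ (as $K\le 0$, $y>0$); hence $\re m_\pm(\lambda)\ge 0$.

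An equivalent, perhaps more structural, route to $\re m_\pm(\lambda)\ge 0$ is to use that $\im\bigl(\lambda\,m_\pm(\lambda)\bigr)\ge 0$ on $\C_+$ --- a standard feature of the Krein--Stieltjes class $S$, immediate from $\im\frac{\lambda}{s-\lambda}=\frac{s\,\im\lambda}{|s-\lambda|^2}\ge 0$ --- which for $\lambda=(K+\I)y$ reads $y\bigl(K\,\im m_\pm(\lambda)+\re m_\pm(\lambda)\bigr)\ge 0$, i.e.\ $\re m_\pm(\lambda)\ge -K\,\im m_\pm(\lambda)\ge 0$. Either way there is no genuine obstacle: the whole argument is a sign check, and the only point needing a little care is matching the hypothesis $K\in[-1,0]$ to the two terms in \eqref{eq:2side_A} so that both come out nonnegative.
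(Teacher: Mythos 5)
Your proof is correct and follows the paper's own argument: the paper likewise deduces $\im\bigl(m_+(\lambda)m_-(\lambda)\bigr)>0$ for $K\le 0$ from the Stieltjes representation \eqref{eq:m_repr} and then concludes via the lower bound \eqref{eq:2side_A}. You have merely spelled out the sign check ($\re m_\pm\ge 0$, $\im m_\pm>0$) that the paper leaves implicit.
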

\begin{proof}
Using the representation \eqref{eq:m_repr} of $m_\pm$, we get $\im(m_+(\lambda)m_-(\lambda))>0$ if $\lambda=(K+\I)y$ with $K\le 0$ and $y>0$. Inequality \eqref{eq:2side_A} completes the proof.
\end{proof}
Therefore, we can restrict our considerations to the first quarter.

Now let us demonstrate that the condition stronger than \eqref{eq:crit_imaginary} is sufficient for the validity of \eqref{eq:volkmer}. Set 
\[
S_k:=\{\lambda=(K+\I) y: y>0, \ K\in (0,k)\}.
\] 
\begin{lemma}\label{lem:suf_1}
If there is $k>0$ such that 
\be\label{eq:suf01}
\sup_{\lambda\in S_k}\frac{|\im \big( m_+(\lambda)\, m_-(\lambda)\big)|}{\big| m_+(\lambda)-\overline{m_-(\lambda)}\big|^2}=C_0<\infty,
\ee
then  Volkmer's inequality \eqref{eq:volkmer} is valid.
\end{lemma}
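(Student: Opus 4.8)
The plan is to reduce the sufficiency of \eqref{eq:suf01} to a lower bound on $\tilde M_A(\lambda)$ along the two rays $\frac{\re\lambda}{|\lambda|}=\pm\frac1K$, and then to invoke Theorem \ref{th:crit_mf}. By Corollary \ref{cor:neg_k} the form $M_A(\lambda)$ is automatically nonnegative on the ray in the second quarter, so it suffices to exhibit some $\theta\in(0,\frac\pi2)$, equivalently some small $k>0$, with $M_A(\lambda)\ge 0$ for all $\lambda=(K+\I)y$ with $y>0$ and the fixed value $K=k$. Since $\im m_+(\lambda)\,\im m_-(\lambda)>0$ on $\C_+$, by the factorization \eqref{eq:ti_ma2} the condition $M_A(\lambda)\ge 0$ is equivalent to $M_{11}(\lambda)\ge 0$ (which always holds, being a sum of two positive imaginary parts) together with $\tilde M_A(\lambda)\ge 0$.

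The first step is therefore to show that \eqref{eq:suf01} forces $\tilde M_A(\lambda)\ge 0$ on a ray $K=k$ for $k$ small enough. I would use the lower bound \eqref{eq:2side_A} from Corollary \ref{cor:2side}: for $\lambda=(k+\I)y$ with $k\in(0,1)$,
\[
\tilde M_A(\lambda)\ge (1-k^2)\,\bigl|m_+(\lambda)-\overline{m_-(\lambda)}\bigr|^2-4k\,\im\bigl(m_+(\lambda)m_-(\lambda)\bigr).
\]
Dividing by $\bigl|m_+(\lambda)-\overline{m_-(\lambda)}\bigr|^2>0$ and using \eqref{eq:suf01}, the right-hand side is bounded below by $\bigl((1-k^2)-4kC_0\bigr)\bigl|m_+(\lambda)-\overline{m_-(\lambda)}\bigr|^2$, which is nonnegative provided $k$ is chosen so small that $1-k^2\ge 4kC_0$, e.g. any $k\in(0,k_0]$ with $k_0$ the smaller root of $1-k^2=4kC_0$. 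For such a $k$ we then have $\tilde M_A((k+\I)y)\ge 0$ for every $y>0$.

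The second step is to translate this into the hypothesis of Theorem \ref{th:crit_mf}. Set $\theta\in(0,\frac\pi2)$ by $\cos\theta=\frac{k}{\sqrt{1+k^2}}$, i.e. $\frac{|\re\lambda|}{|\lambda|}=\cos\theta$ exactly on the two rays $\re\lambda=\pm k\,\im\lambda$. On the ray $\re\lambda=+k\,\im\lambda$ we have just shown $\tilde M_A\ge 0$, hence $\det M_A\ge 0$, and since also $M_{11}=\im m_++\im m_-\ge 0$, the matrix $M_A(\lambda)$ is nonnegative. On the ray $\re\lambda=-k\,\im\lambda$ (i.e. $K=-k\in[-1,0]$) Corollary \ref{cor:neg_k} gives $\tilde M_A(\lambda)\ge 0$ and the same argument yields $M_A(\lambda)\ge 0$. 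Thus \eqref{eq:m_cond} holds for this $\theta$, and Theorem \ref{th:crit_mf} gives the validity of \eqref{eq:volkmer}, completing the proof.

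The one point requiring slight care — the main (though modest) obstacle — is the passage from $\det M_A\ge 0$ to $M_A\ge 0$: one must observe that $M_{11}(\lambda)=\im m_+(\lambda)+\im m_-(\lambda)$ is strictly positive on $\C_+$ (both $m_\pm$ are Herglotz and nonconstant), so a $2\times2$ Hermitian matrix with positive $(1,1)$-entry and nonnegative determinant is automatically nonnegative. The rest is a direct application of the already-established Lemma \ref{lem:det_a}, Corollary \ref{cor:2side}, Corollary \ref{cor:neg_k}, and Theorem \ref{th:crit_mf}.
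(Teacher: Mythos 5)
Your argument is correct and follows essentially the same route as the paper: combine the lower bound \eqref{eq:2side_A} with \eqref{eq:suf01} to force $\tilde{M}_A\ge 0$ on a ray of small positive slope, invoke Corollary \ref{cor:neg_k} on the ray of negative slope, and conclude via \eqref{eq:ti_ma2} and Theorem \ref{th:crit_mf} (your explicit observation that a Hermitian $2\times 2$ matrix with positive $(1,1)$-entry and nonnegative determinant is nonnegative is exactly the step the paper leaves implicit). One small point: the slope of the chosen ray must also be smaller than the $k$ from the hypothesis so that \eqref{eq:suf01} is actually applicable on that ray, i.e.\ it should be taken in $(0,\min\{k,k_0\})$ as the paper does with $\tilde{k}=\min\{k,k_0\}$; your phrase ``any $k\in(0,k_0]$'' overlooks this, though the fix is immediate.
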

\begin{proof}
Set  $k_0:=\sqrt{C_0^2+1}-C_0>0$. Note that $k_0$ satisfies 
\[
\frac{1-k_0^2}{2k_0}=C_0.
\] 
Hence \eqref{eq:2side_A} implies that  $\tilde{M}_A(\lambda)\ge 0$  for all $\lambda=(K+\I)y$ with $y>0$ and $K\in(0,\tilde{k})$, where $\tilde{k}=\min\{k,k_0\}$. By \eqref{eq:ti_ma2}, Corollary \ref{cor:neg_k}  and Theorem \ref{th:crit_mf}, Volkmer's inequality holds true with the constant $K_0:=\sqrt{1+\frac{1}{\tilde{k}^2}}$.
\end{proof}

As an immediate corollary of Lemma \ref{lem:suf_1} we obtain that the left-hand side in \eqref{eq:suf01} is unbounded in any sector $S_k$ if \eqref{eq:volkmer} is not valid.
\begin{corollary}\label{cor:nes_2}
If Volkmer's inequality \eqref{eq:volkmer} is not valid, then there is a sequence $\{\lambda_n\}_1^\infty\subset\C_+$ such that
\be\label{eq:seq_z}
\lambda_n=(k_n+\I) y_n, \quad y_n \to +\infty,\quad k_n\to+0,
\ee
and 
\be\label{eq:seq_ineq}
\frac{|\im \big( m_+(\lambda_n)\, m_-(\lambda_n)\big)|}{\big| m_+(\lambda_n)-\overline{m_-(\lambda_n)}\big|^2}\to +\infty.
\ee
\end{corollary}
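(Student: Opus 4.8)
\emph{Outline of the argument.} The plan is to argue by contraposition from Lemma~\ref{lem:suf_1}. If the Volkmer inequality \eqref{eq:volkmer} is not valid, then for \emph{every} $k>0$ the supremum in \eqref{eq:suf01} is infinite; applying this with $k=\tfrac1n$ for each $n\in\N$ produces a point $\mu_n=(k_n+\I)y_n\in S_{1/n}$ with
\[
Q(\mu_n)>n,\qquad Q(\lambda):=\frac{\bigl|\im\bigl(m_+(\lambda)\,m_-(\lambda)\bigr)\bigr|}{\bigl|m_+(\lambda)-\overline{m_-(\lambda)}\bigr|^{2}}.
\]
By construction $k_n\in(0,\tfrac1n)$, so we already have $k_n\to+0$ and $Q(\mu_n)\to+\infty$, and it only remains to check that $y_n\to+\infty$; then $\lambda_n:=\mu_n$ is the desired sequence satisfying \eqref{eq:seq_z}--\eqref{eq:seq_ineq}.

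The first step toward $y_n\to+\infty$ is to observe that $Q$ is locally bounded on $\C_+$. Since $m_\pm$ are Herglotz, they are analytic and non-vanishing on $\C_+$ with $\im m_\pm>0$ there, so on any compact $C\subset\C_+$ one has $\im m_\pm\ge c>0$ and $|m_\pm|\le M$; then $\bigl|m_+-\overline{m_-}\bigr|^{2}\ge(\im m_++\im m_-)^{2}\ge4c^{2}$ while $\bigl|\im(m_+m_-)\bigr|\le M^{2}$, so $Q\le M^{2}/(4c^{2})$ on $C$. Hence only finitely many of the $\mu_n$ can lie in any fixed compact subset of $\C_+$, i.e.\ $\{\mu_n\}$ escapes every such set. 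Combining this with $\re\mu_n/\im\mu_n=k_n\to0$, every limit point of $\{\mu_n\}$ in the extended plane must be either $0$ or $\infty$.

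The remaining, and really the only delicate, step is to rule out accumulation at the origin. For this I would use that $m_+$ and $m_-$ each have a simple pole at $\lambda=0$ — this is recorded right after \eqref{eq:m_repr2}, and it is exactly the fact that forces $\lambda_j\to\infty$ at the end of the proof of Lemma~\ref{lem:m_assympt}. Thus $\lambda m_\pm(\lambda)\to-\alpha_\pm$ as $\lambda\to0$ in $\C_+$ for some $\alpha_\pm>0$, so $m_\pm(\lambda)\sim-\alpha_\pm/\lambda$. Feeding these asymptotics into $Q$ and using $\bigl|m_+-\overline{m_-}\bigr|^{2}\ge(\im m_++\im m_-)^{2}$ once more, a short computation gives, along any subsequence with $\mu_n\to0$,
\[
Q(\mu_n)\le\bigl(1+o(1)\bigr)\,\frac{2\alpha_+\alpha_-}{(\alpha_++\alpha_-)^{2}}\,k_n\longrightarrow0,
\]
contradicting $Q(\mu_n)>n$. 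Therefore $|\mu_n|\to\infty$, and since $\{k_n\}$ is bounded this forces $y_n=\im\mu_n\to+\infty$, which completes the proof. The main obstacle is precisely this last point: everything else is a routine compactness/continuity argument, but excluding the origin requires the pole structure of the Weyl $m$-functions (the numerator and the lower bound $(\im m_++\im m_-)^2$ on the denominator are both of order $|\lambda|^{-4}$ near $0$, and the numerator carries an extra factor proportional to $k_n$).
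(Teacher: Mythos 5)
Your argument is correct and is essentially the paper's own: the paper obtains the sequence directly from Theorem \ref{th:crit_mf} together with the estimate \eqref{eq:2side_A} (yielding the explicit lower bound $(1-2k_n^2)/(4k_n)$ for the quotient), which is exactly the content of the contrapositive of Lemma \ref{lem:suf_1} that you invoke, and the paper likewise rules out accumulation of $\lambda_n$ at $0$ (and at finite points of $\I\R_+$) via the simple pole of $m_\pm$ at $\lambda=0$ and the positivity of $\im m_+ + \im m_-$ on the imaginary axis — precisely the steps you spell out through local boundedness of $Q$ and the residue computation. One small correction: writing $m_\pm(\lambda)=-\alpha_\pm/\lambda+h_\pm(\lambda)$ with $h_\pm$ analytic and real at $0$, the cross term $-(\alpha_+h_-(0)+\alpha_-h_+(0))/\lambda$ contributes a quantity of order $y_n$ (not $k_n$) to $|\im(m_+m_-)|$ relative to the lower bound $(\im m_++\im m_-)^2$ for the denominator, so your displayed estimate should read $Q(\mu_n)=O(k_n+y_n)\to0$ rather than $(1+o(1))\,2\alpha_+\alpha_-(\alpha_++\alpha_-)^{-2}k_n$; the contradiction with $Q(\mu_n)>n$ is of course unaffected.
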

\begin{proof}
By Theorem \ref{th:crit_mf} and \eqref{eq:ti_ma2}, if Vollkmer's inequality is not valid, then for any sequence $k_n\downarrow 0$ there is a sequence  $y_n>0$ such that 
\[
\tilde{M}_A(\lambda_n)<0,\quad \lambda_n=(k_n+\I)y_n,\quad (n\in\N).
\]
Therefore, by \eqref{eq:2side_A}, 
\[
\frac{|\im \big( m_+(\lambda_n)\, m_-(\lambda_n)\big)|}{\big| m_+(\lambda_n)-\overline{m_-(\lambda_n)}\big|^2}\ge \frac{1-2k_n^2}{4k_n}\to +\infty\quad \text{as}\quad k_n\to+0.
\]
Finally, note that $\lambda_n$ accumulates at $\infty$. Indeed, both $m_+$ and $m_-$ have a simple pole at $\lambda=0$ and hence $\lambda_n$ cannot accumulate at $0$. Moreover, $\im (m_+(\I y)-\overline{m_-(\I y)})=\im m_+(\I y)+\im m_-(\I y)>0$ for any finite $y>0$.
\end{proof}
In the next corollary we shall show that one can choose a sequence $\lambda_n$ with the properties \eqref{eq:seq_z} and such that it satisfies a condition stronger than \eqref{eq:seq_ineq}.
\begin{corollary}\label{cor:nes_2B}
If Volkmer's inequality \eqref{eq:volkmer} is not valid, then there is a sequence $\{\lambda_n\}_1^\infty\subset\C_+$ satisfying \eqref{eq:seq_z} and such that
\be\label{eq:seq_ineqB}
\lim_{n\to+\infty}\frac{\re \big( m_+(\lambda_n)+ m_-(\lambda_n)\big)}{\big| m_+(\lambda_n)-\overline{m_-(\lambda_n)}\big|}=+\infty.
\ee
\end{corollary}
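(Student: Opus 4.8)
\textbf{Proof proposal for Corollary \ref{cor:nes_2B}.} The plan is to upgrade the sequence produced by Corollary \ref{cor:nes_2} — which satisfies \eqref{eq:seq_z} and the ratio blow-up \eqref{eq:seq_ineq} with the \emph{denominator squared} — to the stronger asymptotic \eqref{eq:seq_ineqB}, where the denominator appears to the first power and the numerator is the (real, positive) quantity $\re(m_+(\lambda_n)+m_-(\lambda_n))$ rather than $|\im(m_+(\lambda_n)m_-(\lambda_n))|$. First I would take the sequence $\lambda_n=(k_n+\I)y_n$ from Corollary \ref{cor:nes_2} and abbreviate $p_n:=m_+(\lambda_n)$, $q_n:=m_-(\lambda_n)$; since both $m_\pm$ are Herglotz, $\im p_n,\im q_n>0$, so $|p_n-\overline{q_n}|^2\ge(\im p_n+\im q_n)^2>0$ and the ratios make sense. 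The elementary identity $\im(p_nq_n)=\re p_n\,\im q_n+\re q_n\,\im p_n$ together with $|\im(p_nq_n)|\le\tfrac12(|p_n|+|q_n|)(\im p_n+\im q_n)\le\tfrac12(|p_n|+|q_n|)|p_n-\overline{q_n}|$ then gives
\[
\frac{|\im(p_nq_n)|}{|p_n-\overline{q_n}|^2}\;\le\;\frac12\,\frac{|p_n|+|q_n|}{|p_n-\overline{q_n}|},
\]
so the left side of \eqref{eq:seq_ineq} being $\to+\infty$ forces $(|p_n|+|q_n|)/|p_n-\overline{q_n}|\to+\infty$ as well.

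The next step is to pass from $|p_n|+|q_n|$ in the numerator to $\re(p_n+q_n)$. This cannot be done for an arbitrary Herglotz-valued sequence, so here I would use the freedom in choosing the sequence: Corollary \ref{cor:nes_2} (via Theorem \ref{th:crit_mf} and \eqref{eq:ti_ma2}) actually allows us to pick, for \emph{any} prescribed $k_n\downarrow0$, a value $y_n>0$ with $\tilde M_A(\lambda_n)<0$. Along such a sequence \eqref{eq:2side_B} gives $|p_n-\overline{q_n}|^2<4k_n\,\im(p_nq_n)$, hence in particular $\im(p_nq_n)>0$ and $\im(p_nq_n)>\tfrac1{4k_n}|p_n-\overline{q_n}|^2$. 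Since $\im(p_nq_n)=\re p_n\,\im q_n+\re q_n\,\im p_n$, at least one of $\re p_n,\re q_n$ is positive, and combined with $\im p_n,\im q_n\le|p_n-\overline{q_n}|$ this yields $\re p_n+\re q_n\ge\re p_n\,\im q_n/|p_n-\overline{q_n}|+\re q_n\,\im p_n/|p_n-\overline{q_n}|$ — but this is too crude. A cleaner route is to argue that along this sequence one in fact has $\im p_n=o(|p_n|)$ and $\im q_n=o(|q_n|)$, i.e.\ the arguments of $p_n$ and $q_n$ tend to $0$ (as in Lemma \ref{lem:m_assympt}), so that $\re p_n\sim|p_n|$, $\re q_n\sim|q_n|$, and then $\re(p_n+q_n)\sim|p_n|+|q_n|$, reducing \eqref{eq:seq_ineqB} to the already-established blow-up of $(|p_n|+|q_n|)/|p_n-\overline{q_n}|$.

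Concretely, to see $\arg p_n,\arg q_n\to0$: from $|p_n-\overline{q_n}|^2<4k_n\im(p_nq_n)$ and $|p_n-\overline{q_n}|^2\ge|p_n|^2+|q_n|^2-2\re(p_n\overline{q_n})$ one extracts, after expanding $\im(p_nq_n)$ and using $k_n\to0$, that $(\im p_n)^2+(\im q_n)^2+(\re p_n-\re q_n)^2=O(k_n)\big(\re p_n\,\im q_n+\re q_n\,\im p_n\big)$; since the right side is $O(k_n)(|p_n|+|q_n|)(\im p_n+\im q_n)\le O(k_n)(|p_n|+|q_n|)\sqrt{(\im p_n)^2+(\im q_n)^2}$, one gets $\sqrt{(\im p_n)^2+(\im q_n)^2}=O(k_n)(|p_n|+|q_n|)$, whence $\im p_n+\im q_n=o(|p_n|+|q_n|)$. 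Feeding this back, $|p_n-\overline{q_n}|^2\ge(\re p_n-\re q_n)^2$ and also $|p_n-\overline{q_n}|\le|p_n|+|q_n|$, and $\re(p_n+q_n)=|p_n|+|q_n|-o(|p_n|+|q_n|)$ (using $\re p_n=\sqrt{|p_n|^2-(\im p_n)^2}=|p_n|(1-o(1))$ and likewise for $q_n$, noting $\re p_n,\re q_n\ge0$ follows since $\im p_n,\im q_n>0$ are small compared to $|p_n|,|q_n|$ and $\im(p_nq_n)>0$ rules out both real parts being nonpositive — and once the imaginary parts are negligible each real part is forced to have the sign making $|p_n|,|q_n|\approx|\re p_n|,|\re q_n|$). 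Therefore
\[
\frac{\re(p_n+q_n)}{|p_n-\overline{q_n}|}=(1+o(1))\,\frac{|p_n|+|q_n|}{|p_n-\overline{q_n}|}\;\ge\;(1+o(1))\cdot 2\,\frac{|\im(p_nq_n)|}{|p_n-\overline{q_n}|^2}\;\longrightarrow\;+\infty,
\]
which is \eqref{eq:seq_ineqB}. The main obstacle is the middle step — controlling $\re(p_n\overline{q_n})$ and the individual arguments $\arg p_n,\arg q_n$ from the single scalar inequality $\tilde M_A(\lambda_n)<0$; the key realization that makes it work is that the defining inequality forces not just the product $\im(p_nq_n)$ to dominate but, because $k_n\to0$, the full quadratic form $(\im p_n+\im q_n)^2+(\re p_n-\re q_n)^2=|p_n-\overline{q_n}|^2-4\im p_n\im q_n+\dots$ to be negligible against it, pinning down both arguments simultaneously.
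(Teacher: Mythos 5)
Your proposal is correct in substance, but it reaches \eqref{eq:seq_ineqB} by a route different from the paper's. The paper stays with the conclusion \eqref{eq:seq_ineq} itself: writing $\im(m_+m_-)=\re m_+\,\im m_-+\re m_-\,\im m_+$, it uses the elementary bound $\im m_\pm(\lambda)\le \im\big(m_+(\lambda)+m_-(\lambda)\big)\le|m_+(\lambda)-\overline{m_-(\lambda)}|$ to trade one power of the denominator for the factor $\im m_\pm$, so that $|\re m_\mp(\lambda_n)|/|m_+(\lambda_n)-\overline{m_-(\lambda_n)}|$ blows up along a subsequence, and then invokes the Herglotz representation \eqref{eq:m_repr} together with the fact that $\lambda_n$ is asymptotically imaginary and tends to $\infty$ (so $m_\pm(\lambda_n)\to0$) to get $\re m_+(\lambda_n)=\re m_-(\lambda_n)(1+o(1))$ and eventual positivity of both real parts. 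You instead re-enter the proof of Corollary \ref{cor:nes_2} and exploit the defining inequality $\tilde M_A(\lambda_n)<0$ with a prescribed $k_n\downarrow 0$, extracting from it that both $\im m_++\im m_-$ and $\re m_+-\re m_-$ are $O(k_n)\big(|m_+|+|m_-|\big)$ at $\lambda_n$; this pins both arguments at once and yields $\re(m_++m_-)\ge(1-o(1))(|m_+|+|m_-|)$ without any appeal to \eqref{eq:m_repr}. That is a genuinely self-contained variant, giving a quantitative form of the relations \eqref{eq:5.02} that the paper obtains qualitatively.

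Three small repairs are needed, none structural (below $p_n=m_+(\lambda_n)$, $q_n=m_-(\lambda_n)$). First, $|\im(p_nq_n)|\le\tfrac12(|p_n|+|q_n|)(\im p_n+\im q_n)$ is false in general (it is equivalent to $(|p_n|-|q_n|)(\im p_n-\im q_n)\ge0$); with constant $1$ it is trivial, and that suffices since only divergence of the quotient matters. Second, the inequality $|p_n-\overline{q_n}|^2<4k_n\,\im(p_nq_n)$ must come from the lower estimate \eqref{eq:2side_A} (or directly from \eqref{eq:ti_ma}, absorbing $4k_n^2\,\im p_n\,\im q_n\le k_n^2|p_n-\overline{q_n}|^2$), not from the upper estimate \eqref{eq:2side_B}: an upper bound on the negative quantity $\tilde M_A(\lambda_n)$ gives no information; the corrected constant $4k_n/(1-k_n^2)$ changes nothing. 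Third, the sign step should be made explicit: from $\im p_n+\im q_n\le Ck_n(|p_n|+|q_n|)$ you get $|\re p_n|+|\re q_n|\ge(1-Ck_n)(|p_n|+|q_n|)$; opposite signs of $\re p_n,\re q_n$ would then contradict $|\re p_n-\re q_n|\le Ck_n(|p_n|+|q_n|)$, and both nonpositive is excluded by $\im(p_nq_n)>0$, so both are eventually positive and $\re(p_n+q_n)\ge(1-Ck_n)(|p_n|+|q_n|)$ — note you cannot claim $\im p_n=o(|p_n|)$ individually at this stage, but you do not need it. Finally, since you re-select the sequence so that $\tilde M_A(\lambda_n)<0$, the property $y_n\to+\infty$ in \eqref{eq:seq_z} should be carried over by the same pole-at-zero argument used in the proof of Corollary \ref{cor:nes_2}; it applies verbatim to your sequence.
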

\begin{proof}
By Corollary \ref{cor:nes_2}, there is a sequence $\lambda_n$ with the properties \eqref{eq:seq_z}--\eqref{eq:seq_ineq} if \eqref{eq:volkmer} is not valid. Therefore, at least one of the following sequences
\[
\frac{\im\, m_+(\lambda_n)\re\, m_-(\lambda_n)}{| m_+(\lambda_n)-\overline{m_-(\lambda_n)}|^2},\quad \frac{\im\, m_-(\lambda_n)\re\, m_+(\lambda_n)}{| m_+(\lambda_n)-\overline{m_-(\lambda_n)}|^2}
,\quad n\in\N,
\]
is unbounded. Assume that the second sequence is unbounded. Noting that
\[
\frac{\im\, m_\pm(\lambda)}{|m_+(\lambda)-\overline{m_-(\lambda)}|}\le \frac{\im\, m_\pm(\lambda)}{\im (m_+(\lambda)+m_-(\lambda))}<1,\quad \lambda\in\C_+,
\]
we get
\be\label{eq:5.08B}
\sup_{n}\frac{\re \, m_+(\lambda_n)}{\big| m_+(\lambda_n)-\overline{m_-(\lambda_n)}\big|}=\infty.
\ee
Without loss of generality we can assume that 
\[
\frac{|\re \, m_+(\lambda_n)|}{\big| m_+(\lambda_n)-\overline{m_-(\lambda_n)}\big|}\to +\infty\quad \text{as}\quad n\to \infty.
\]
The latter, in particular, implies
\be\label{eq:5.09}
\frac{\re\, m_+(\lambda_{n})-\re\, m_-(\lambda_{n})}{\re\, m_+(\lambda_{n})}\to 0,\quad \frac{\im\, m_\pm(\lambda_{n})}{\re\, m_+(\lambda_{n})}\to 0,\quad \text{as}\quad n\to\infty,
\ee
Again, using the representation  \eqref{eq:m_repr} and the assumption that $\lambda_n$ is asymptotically imaginary, we get $m_\pm(\lambda_n)\to 0$ as $n\to\infty$, and hence the first relation in \eqref{eq:5.09} yields 
\be\label{eq:5.09C}
\re\, m_+(\lambda_{n})=\re\, m_-(\lambda_{n})(1+o(1)),\quad n\to\infty.
\ee
On the other hand, the second relation in \eqref{eq:5.09} and \eqref{eq:5.08} implies that $\re\, m_+(\lambda_{n})$ and hence $\re\, m_-(\lambda_{n})$ are positive for sufficiently large $n\in\N$. 
Combining this fact and the last relation with \eqref{eq:5.08B}, we arrive at \eqref{eq:seq_ineqB}.
\end{proof}

The most difficult part of the proof of Theorem \ref{th:criterion} is to prove the statement converse to Corollary \ref{cor:nes_2}. The two-sided estimate \eqref{eq:2side_A}--\eqref{eq:2side_B} shows that for $\tilde{M}_A$ to be positive in some sector $S_k$ it is necessary that the left-hand side in \eqref{eq:suf01} is less than $\frac{1}{4K}$ if $\lambda=(K+\I) y$, $y>0$. But \eqref{eq:2side_A} allows some growth in $K$ for the left-hand side in \eqref{eq:suf01}. Namely, if it is less than $\frac{1-K^2}{4K}$ for all $\lambda=(K+\I) y$, then $\tilde{M}_A$ is nonnegative on the corresponding ray. So, to prove the converse implication we need to show that for any $K>0$ there is $y_K>0$ such that the fraction at the left-hand side in \eqref{eq:suf01} is greater than $\frac{1}{4K}$. 

\begin{lemma}\label{lem:5.4}
If there is a sequence $\{\lambda_n\}\subset\C_+$ satisfying \eqref{eq:seq_z} and such that \eqref{eq:seq_ineqB} holds, then Volkmer's inequality \eqref{eq:volkmer} is not valid.
\end{lemma}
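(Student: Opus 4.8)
The plan is to contradict the first criterion, Theorem \ref{th:crit_mf}. Note first that, by \eqref{eq:ti_ma2}, $\det M_A(\lambda)=\im m_+(\lambda)\,\im m_-(\lambda)\,\tilde M_A(\lambda)$, and since the entry $M_{11}(\lambda)=\im m_+(\lambda)+\im m_-(\lambda)$ is strictly positive for $\lambda\in\C_+$, a matrix $M_A(\lambda)$ fails to be nonnegative precisely when $\tilde M_A(\lambda)<0$. Parametrising a ray in $\C_+$ with positive real part by $\lambda=(K+\I)y$, $y>0$, $K=\cot\theta>0$, and using Corollary \ref{cor:neg_k} to dispose of the rays with $\re\lambda\le0$, one sees that it is enough to prove: for every $K>0$ there is $y_K>0$ with $\tilde M_A((K+\I)y_K)<0$. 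By the upper estimate \eqref{eq:2side_B} this amounts to producing, for each $K>0$, a point $\lambda=(K+\I)y_K$ at which $4K\,\im(m_+(\lambda)m_-(\lambda))>|m_+(\lambda)-\overline{m_-(\lambda)}|^2$, i.e. at which the fraction in \eqref{eq:suf01} exceeds $1/(4K)$ --- the reformulation already indicated before the lemma.

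First I would unpack the hypothesis. Exactly as in the passage leading to \eqref{eq:5.09}--\eqref{eq:5.09C} in the proof of Corollary \ref{cor:nes_2B}, conditions \eqref{eq:seq_z}--\eqref{eq:seq_ineqB} force, for all large $n$, $\re m_\pm(\lambda_n)>0$, $\re m_+(\lambda_n)/\re m_-(\lambda_n)\to1$ and $\im m_\pm(\lambda_n)/\re m_\pm(\lambda_n)\to0$; moreover $m_\pm(\lambda_n)\to0$, because $m_\pm$ have a simple pole at $0$, admit the representation \eqref{eq:m_repr}--\eqref{eq:m_repr2}, and $\lambda_n$ is asymptotically imaginary with $y_n\to+\infty$. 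The elementary estimate on \eqref{eq:m_repr} that produces \eqref{eq:5.07}--\eqref{eq:5.08} gives $|m_\pm(\lambda_n)-m_\pm(\I y_n)|\le C k_n\,\im m_\pm(\lambda_n)$, so, since $k_n\to+0$, all the listed properties remain valid with $\lambda_n$ replaced by $\I y_n$. Write $\hat a_\pm:=\re m_\pm(\I y_n)$ and $\hat b_\pm:=\im m_\pm(\I y_n)$, both eventually positive with $\hat b_\pm=o(\hat a_\pm)$ and $\hat a_+\asymp\hat a_-$.

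Now fix $K>0$. A variant of the computation in \eqref{eq:5.07}--\eqref{eq:5.08}, this time keeping $K$ fixed rather than letting it tend to $0$, gives constants $0<c_K<C_K$ (independent of $n$) with $c_K\hat b_\pm\le\im m_\pm((K+\I)y_n)\le C_K\hat b_\pm$ and $|m_\pm((K+\I)y_n)-m_\pm(\I y_n)|\le C_K\hat b_\pm$. Since $\hat b_\pm=o(\hat a_\pm)$, this forces $\re m_\pm((K+\I)y_n)=\hat a_\pm(1+o_K(1))>0$ and $\im m_\pm((K+\I)y_n)\asymp_K\hat b_\pm$ for large $n$. Substituting these into the formula for $\tilde M_A$ from Lemma \ref{lem:det_a}, one finds that $\tilde M_A((K+\I)y_n)$ equals $(\hat a_+-\hat a_-)^2$ minus a term of exact order $\hat a_+(\hat b_++\hat b_-)$ (coming from $-4K\,\im(m_+m_-)=-4K(\re m_+\cdot\im m_-+\re m_-\cdot\im m_+)$), up to errors which are $O_K((\hat b_++\hat b_-)^2)+o_K(\hat a_+(\hat b_++\hat b_-))=o_K(\hat a_+(\hat b_++\hat b_-))$. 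Consequently $\tilde M_A((K+\I)y_n)<0$ for large $n$ provided
\[
(\hat a_+-\hat a_-)^2=o\big(\hat a_+(\hat b_++\hat b_-)\big)\qquad\text{along a subsequence.}
\]

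Verifying this comparison is the main obstacle: it is \emph{not} a consequence of \eqref{eq:seq_ineqB}, which only pins $(\hat a_+-\hat a_-)^2+(\hat b_++\hat b_-)^2$ against $(\hat a_++\hat a_-)^2$, and it has to be extracted from the fine asymptotic structure of the Weyl--Titchmarsh functions. Here I would invoke the asymptotic expansions of $m_+$ and $m_-$ along near-imaginary rays collected in the Appendix (following \cite{Ben89}): since $\arg m_\pm(\I y_n)\to0$, the one-sided Bennewitz quantities satisfy $S_0\equiv1$ on both $(0,1)$ and $(-1,0)$ (cf. Lemma \ref{lem:m_assympt} and Corollary \ref{cor:ben01}), so those expansions describe $m_+(\I y_n)$ and $m_-(\I y_n)$ explicitly, and \eqref{eq:seq_ineqB} then forces their leading parts to coincide closely enough that $(\hat a_+-\hat a_-)^2=o(\hat a_+(\hat b_++\hat b_-))$ along a suitable subsequence. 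For that subsequence $\tilde M_A((K+\I)y_n)<0$; as $K>0$ was arbitrary, Theorem \ref{th:crit_mf} together with Corollary \ref{cor:neg_k} shows that \eqref{eq:volkmer} is not valid, which is the assertion of the lemma.
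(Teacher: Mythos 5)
Your reduction is sound: by Theorem \ref{th:crit_mf}, Corollary \ref{cor:neg_k} and \eqref{eq:2side_B} it suffices to show that for every $K>0$ there is a point $\lambda=(K+\I)y_K$ at which $|\im(m_+(\lambda)m_-(\lambda))|/|m_+(\lambda)-\overline{m_-(\lambda)}|^2>1/(4K)$, and your derivation of the preliminary asymptotics \eqref{eq:5.02} is correct. But the core of the lemma is exactly the step you defer, and as set up it does not go through. You look for the required point on the ray of slope $K$ at the \emph{original} heights $y_n$, which forces you to prove $(\hat a_+-\hat a_-)^2=o\big(\hat a_+(\hat b_++\hat b_-)\big)$. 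This is not a consequence of \eqref{eq:seq_ineqB} (as you note), and it is also not what the Bennewitz asymptotics deliver at those heights: the rescaled quantities $m_\pm(\I\rho_n)/(a_\pm f_\pm(\rho_n))$ are only pinned to the Weyl circle $C_{1/2}(a_\pm/a_\pm+\I/2)$ of \emph{fixed} radius $1/2$, on which one gets $(\re z_+-\re z_-)^2\le 2(\im z_++\im z_-)$ --- a big-$O$, not a little-$o$, comparison --- and in addition the normalizations $a_+f_+(\rho_n)$ and $a_-f_-(\rho_n)$ agree only up to an uncontrolled $(1+o(1))$ factor, which by itself can contribute a difference $\hat a_+-\hat a_-$ of size $o(\hat a_+)$ but not necessarily $o(\sqrt{\hat a_+(\hat b_++\hat b_-)})$. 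The resulting bound on the fraction at the heights $y_n$ is only an absolute constant (of order $1/8$), which beats $1/(4K)$ for large $K$ but fails precisely in the regime of small $K$ that one must handle to contradict \eqref{eq:m_cond} for every $\theta$.

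The paper's proof avoids this by not staying at the heights $y_n$: for fixed $K$ it evaluates the fraction at the rescaled points $\mu\rho_n$ with $\mu=(K+\I)y_K$ and $y_K$ \emph{large}. By the Appendix, $m_\pm(\mu\rho_n)/(a_\pm f_\pm(\rho_n))$ then lies asymptotically on the shrunken circle $C_\rho(1+\I\rho)$ with $\rho=1/(2a_\pm\im\mu)$ (see \eqref{eq:5.03B}), and the elementary geometric inequality \eqref{eq:5.05}, valid for \emph{arbitrary} pairs of points on such a small circle, gives the lower bound $(2a\,\im\mu-1)/8$ for the fraction with no need to compare $\hat a_+-\hat a_-$ against $\hat b_\pm$; choosing $y_K>\frac{1}{aK}+\frac{1}{2a}$ then makes this exceed $1/(4K)$. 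To repair your argument you should replace the search at $(K+\I)y_n$ by this rescaling step (the identification $P_\infty^\pm\equiv a_\pm$ via the real point on the limit Weyl circle, the asymptotic equality $a_-f_-(\rho_n)=a_+f_+(\rho_n)(1+o(1))$ from \eqref{eq:5.02}, and the circle inequality \eqref{eq:5.05}); the comparison you were trying to extract from the Appendix is not needed and is not available in the form you state.
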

\begin{proof}

We divide the proof in two steps.

$(i)$ Let the sequence $\{\lambda_n\}_1^\infty$ satisfy \eqref{eq:seq_z} and \eqref{eq:seq_ineqB}.
The latter means that 
\[
\text{either}\quad \frac{ \re\, m_+(\lambda_n)}{\big| m_+(\lambda_n)-\overline{m_-(\lambda_n)}\big|}\to \infty\quad \text{or}\quad \frac{\re\, m_-(\lambda_n)}{\big| m_+(\lambda_n)-\overline{m_-(\lambda_n)}\big|}\to \infty.
\]
Without loss of generality we assume that the first sequence tends to infinity. Similar to the proof of Corollary \ref{cor:nes_2B}, we arrive at the following relations 
\be\label{eq:5.02}
\im\, m_\pm(\lambda_n)=o(\re\, m_\pm(\lambda_n)),\quad \re\, m_-(\lambda_n)=\re\, m_+(\lambda_n)(1+o(1)),\quad n\to \infty.
\ee
By Lemma \ref{lem:Bennewitz}, the first equality in \eqref{eq:5.02} yields 
\[
S_0^\pm(x)\equiv 1,\quad x\in (0,1),
\]
where the functions $S_0^\pm$ are defined by \eqref{Parfenov}.

$(ii)$ Next we shall use some arguments from \cite{Ben87} (see also Appendix). Set 
\[
\rho_n:=|\lambda_n|,\quad (n\in N),
\]
and define 
\be
R_\pm(x):=\int_0^x |r(\pm t)|dt,\quad x\in (0,1). 
\ee
Note that the function $R_\pm$ is strictly increasing and absolutely continuous on $(0,1)$. Denote by $R_\pm^{-1}$ its inverse and define $f_\pm$ as the inverse of $1/(R_\pm^{-1}(x))^2$. Note that $f_\pm(t)\downarrow 0$ as $t\uparrow +\infty$. 
Next,  consider the following sequence of functions
\[
P_n^\pm(x):=\frac{R_\pm(u_nx)}{R_\pm(u_n)},\quad u_n:=\frac{1}{\rho_nf_\pm(\rho_n)}, \quad (n\in\N).
\] 
Note that $u_n\to 0$ since $\rho_n\to+\infty$ (see Appendix  \ref{ss:a02}). Hence $P_n^\pm$ is increasing and maps $[0,1]$ onto itself. By the Helly theorem we may choose a subsequence of $\rho_n$ so that $P_n^\pm$ converges, pointwise and boundedly along this subsequence, to some increasing function $P_\infty^\pm$. Without loss of generality we can assume that $P_n^\pm$ converges to $P_\infty^\pm$ along the sequence $\rho_n$. 

By \eqref{eq:seq_z} and \eqref{eq:seq_ineqB}, $\lim_{n}\frac{\lambda_n}{\rho_n}= \I$ and $\frac{m_\pm(\lambda_n)}{f_\pm(\rho_n)}$ is asymptotically real. Therefore (see Appendix \ref{ss:a02}), $\frac{m_\pm(\lambda_n)}{f_\pm(\rho_n)}$ is asymptotically in the Weyl circle of the system \eqref{eq:a04} with $P=P_\infty^\pm$. Therefore,  the Weyl circle at $\lambda=\I$ contains a real point. The latter is possible if and only if $P_\infty^\pm(x)\equiv a_\pm$ on $(0,1)$, i.e., $dP_\infty^\pm(x)=a_\pm\delta(x)$ (see Example \ref{ex:a01}). Clearly, $a_\pm\in(0,1]$. 

Therefore (see Appendix \ref{ss:a02} and Example \ref{ex:a01}),  
 \[
 \frac{m_\pm(\lambda_n)}{f_\pm(\rho_n)} \quad \text{is asymptotically in the circle}\quad C_{\frac{1}{2}}(a_\pm+\frac{\I}{2}),
 \]
and hence 
\be\label{eq:5.3a}
 m_\pm(\lambda_n)=a_\pm f_\pm(\rho_n)(1+o(1)),\quad n\to \infty.
\ee
Moreover, \eqref{eq:5.02} implies 
 \be\label{eq:5.03}
a_- f_-(\rho_n)=a_+f_+(\rho_n)(1+o(1)),\quad n\to\infty.
 \ee
Furthermore, $m_\pm(\mu\rho_n)/a_\pm f_\pm(\rho_n)$ is asymptotically in the circle 
\be\label{eq:5.03B}
C_{1/2a_\pm|\im \mu|}\big(1+\frac{\I}{2a_\pm|\im \mu|}\big)=\big\{z:\ \big|z-1-\frac{\I}{2a_\pm|\im \mu|}\big|=\frac{1}{2a_\pm|\im \mu|}\big\}.
\ee
 The latter holds uniformly for $\mu $ in any compact set not intersecting $\R$.
 
(iii) Now fix $K>0$ and let $\mu\in \Gamma_K=\{\lambda:\ \lambda=(K+\I)y,\ y>0\}$. 
 Consider the sequence
 \be\label{eq:5.04}
 M_K(n):=\frac{|\im (m_+(\mu\rho_n)m_-(\mu\rho_n))|}{|m_+(\mu\rho_n)-\overline{m_-(\mu\rho_n)}|^2}.
 \ee
 Notice that for any two points $z_+$ and $z_-$ in the circle $B_{\rho}(1+\I\rho)$ the following inequality
 \be\label{eq:5.05}
 \frac{\im (z_+z_-)}{|z_+-\overline{z_-}|^2}\ge \frac{1-\rho}{8\rho}
 \ee
holds true for a sufficiently small $\rho>0$. Indeed, if $z_+$ and $z_-$ approach $\lambda=1$, the left-hand side in \eqref{eq:5.05} tends to infinity and hence \eqref{eq:5.05} is satisfied in some neighborhood of $\lambda=1$. Assume that $z_\pm\neq 1$, i.e., $\im\, z_\pm>0$. Then
\[
\im (z_+z_-)\ge \min\{\re\, z_+,\re\, z_-\}(\im\, z_++\im\, z_-)\ge (1-\rho)(\im\, z_++\im\, z_-).
\]
On the other hand, we get
\[
\frac{|z_+-\overline{z_-}|^2}{\im\, z_+ + \im\, z_-}
=\frac{(\re\, z_+ - \re\, z_-)^2}{\im\, z_+ +\im\, z_-}+\im\, z_+ +\im\, z_-
\le 2\frac{(\re\, z_+ -1)^2+ (\re\, z_- -1)^2}{\im\, z_+ +\im\, z_-}+4\rho
\]
However, for any $z_\pm \in C_{\rho}(1+\I\rho)\setminus\{1\}$ 
\[
(\re z_\pm -1)^2= \rho^2-(\im\, z_\pm-\rho)^2=\im\, z_\pm(2\rho-\im\, z_\pm)\le 2\rho\im\, z_\pm,
\]
and hence we finally get
\[
\frac{|z_+-\overline{z_-}|^2}{\im\, z_+ +\im\, z_-}\le 8\rho,
\]
which proves \eqref{eq:5.05}.

Therefore, for $n$ large enough we get 
\be\label{eq:5.06}
\frac{|\im (m_+(\mu\rho_n)m_-(\mu\rho_n))|}{|m_+(\mu\rho_n)-\overline{m_-(\mu\rho_n)}|^2}\ge 
\frac{2a\, \im\, \mu-1}{8},\quad a:=\min\{a_+,a_-\}>0. 
\ee
Therefore, choosing any $y_K>\frac{1}{aK}+\frac{1}{2a}$ and setting $\mu_K:=(K+\I)y_K$, we see that the left-hand side in \eqref{eq:5.06} is greater than $\frac{1}{4K}$. Thus, by \eqref{eq:2side_B}, we conclude that $\tilde{M}_A(\mu_K\rho_n)$ becomes negative for  $n\in\N$ large enough. Using \eqref{eq:ti_ma2}, we conclude that for any $K>0$ there is $y_K>0$ such that $\det M_A((K+\I)y_K)<0$. Theorem \ref{th:crit_mf} completes the proof.
\end{proof}
Now we are ready to prove the main theorem.

\begin{proof}[Proof of Theorem \ref{th:criterion}]
The proof is similar to the proof of sufficiency of Theorem \ref{th:ak_crit+}. 

{\em Sufficiency.} Assume the converse, that is \eqref{eq:volkmer} is not valid. Let us show that in this case the supremum in \eqref{eq:crit_imaginary} equals $+\infty$. 

Firstly, by Corollary \ref{cor:nes_2B},  there is a sequence $\{\lambda_n\}\in\C_+$ satisfying \eqref{eq:seq_z} and \eqref{eq:seq_ineqB}. Then, arguing as in the proof of Corollary \ref{cor:nes_2B}, one shows that \eqref{eq:5.02} holds true. 
Next, by \eqref{eq:5.07} and \eqref{eq:5.08}, 
for $k_n<1/2$ and $n$ large enough we have
\be\label{eq:5.10}
|\im\, m_\pm(\lambda_n) - \im\, m_\pm(\I y_n)| \le 2k_n\, \im\, m_\pm(\lambda_n),\quad
|\re\, m_\pm(\lambda_n) - \re\, m_\pm(\I y_n)| \le 3k_n\, \im\, m_\pm(\lambda_n)
\ee
The latter immediately implies 
\[
|m_\pm(\lambda_n)-m_\pm(\I y_n)|\le 4k_n\, \im\, m_\pm(\lambda_n).
\]
Using the first relation in \eqref{eq:5.02}, we obtain
\begin{eqnarray*}
&\re\, m_\pm(\I y_n)=\re\, m_\pm(\lambda_n)(1+o(1)),\\
& |m_+(\I y_n)-m_-(-\I y_n)|=|m_+(\lambda_n)-\overline{m_-( \lambda_n)}|(1+o(1)),\quad n\to\infty.
\end{eqnarray*}
Combining this with \eqref{eq:seq_ineqB} and using the second relation in \eqref{eq:5.02},  we immediately get 
\be\label{eq:5.14}
\lim_{n\to+\infty}\frac{\re( m_+(\I y_n) +  m_-(\I y_n))}{|m_+(\I y_n)-m_-(-\I y_n)|}=+\infty,
\ee
which proves sufficiency. 

{\em Necessity.} Assume that \eqref{eq:crit_imaginary} is not satisfied. To prove the claim it suffices to show that there is a sequence $\{\lambda_n\}\subset\C_+$ satisfying \eqref{eq:seq_z} and \eqref{eq:seq_ineqB}. 

Firstly,  there is $y_n\to+\infty$ such that \eqref{eq:5.14} holds. 
The latter implies \eqref{eq:5.02} with $\I y_n$ instead of $\lambda_n$. 

Choose an arbitrary sequence $K_n\to +0$ and set $\lambda_n:=(K_n+\I)y_n$, $n\in\N$. Again, using \eqref{eq:5.02} and \eqref{eq:5.07}--\eqref{eq:5.08}, one arrives at the estimates \eqref{eq:5.10}. The rest of the proof is analogous to the proof of sufficiency and we left it to the reader.   
\end{proof}
We complete this section with the following remark.
\begin{remark}
The proof of Theorem \ref{th:crit_mf} is based on operator--theoretic ideas from \cite{ee91, ez78} and, clearly, the analysis extends to general Sturm--Liouville differential expressions and even to the abstract settings. However, the proof of Theorem \ref{th:criterion} exploits two specific features of \eqref{eq:volkmer}: (i) the integral represenation \eqref{eq:m_repr} of $m$-coefficients $m_+$ and $m_-$, (ii) connections between the asymptotic behavior of $m_\pm$ and the behavior of $r$ at $\pm 0$. Therefore, Theorem \ref{th:criterion} can be extended at least to the case of general regular Sturm--Liouville expressions under the assumption that $m_+$ and $m_-$ admit the representation \eqref{eq:m_repr}--\eqref{eq:m_repr2}.   
\end{remark}

\section{Necessary and sufficient conditions for the validity of Volkmer's inequality in terms of weights} \label{sec:cond}

\subsection{The case of odd $r$}\label{ss:odd}
Clearly, the condition \eqref{eq:ev_mf} implies \eqref{eq:m_cond}. Indeed, \eqref{eq:ev_mf} is satisfied precisely if \eqref{eq:volkmer} holds for all $f\in\gD_{\max}$, however, \eqref{eq:m_cond} gives a criterion for the validity of \eqref{eq:volkmer} on a smaller domain $\dom(A)$. The converse implication is not true in general (cf. Section \ref{ss:scaling} below). However, in  exceptional cases it is indeed true. In particular, the next result shows that it is true  for odd $r$.
 
\begin{lemma}\label{cor:3.2}
Assume that $r\in L^1(-1,1)$ is odd and $xr(x)>0$ a.e. on $(-1,1)$. Let also $m_+$ be the $m$-function defined by \eqref{eq:weyl_s}. Then the following are equivalent:
\begin{enumerate}
\item[(i)] Volkmer's inequality \eqref{eq:volkmer} is valid,
\item[(ii)] the HELP inequality \eqref{eq:help} is valid,
\item[(iii)] there is $K>0$ such that $m_+$ satisfies \eqref{eq:ev_mf},
\item[(iv)] 
\be
\sup_{y>0}\frac{\re\, m_+(\I y)}{\im \, m_+(\I y)}<+\infty,
\ee
\item[(v)] the function $r$ satisfies Bennewitz's condition \eqref{Parfenov}.
\end{enumerate}
\end{lemma}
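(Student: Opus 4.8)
The plan is to exploit the special structure that oddness imposes, namely that $r(-x)=-r(x)$ forces $m_-=m_+$. Indeed, if $r$ is odd then the differential expression $\ell[f]=-(|r|^{-1}f')'$ is invariant under $x\mapsto -x$, so the solutions $c(x,\lambda),s(x,\lambda)$ on $[-1,0]$ are mirror images of those on $[0,1]$, and the Weyl solutions $\psi_\pm$ defined by \eqref{eq:weyl_s} satisfy $\psi_-(x,\lambda)=\pm\psi_+(-x,\lambda)$ with the matching Neumann condition at $x=\mp1$. Comparing with \eqref{eq:weyl_s} this yields $m_-(\lambda)=m_+(\lambda)$ for all $\lambda\in\C_+$. (I would spell this parity computation out carefully, since it is the one genuinely new input; everything afterward is bookkeeping on top of earlier results.)

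With $m_-\equiv m_+=:m$ in hand, the equivalences fall out of the results already proved. First, $(\mathrm{ii})\Leftrightarrow(\mathrm{iv})$ is exactly Theorem~\ref{th:ak_crit+}, and $(\mathrm{ii})\Leftrightarrow(\mathrm{v})$ is Theorem~\ref{Bennewitz_HELP}. Next, for $(\mathrm{i})\Leftrightarrow(\mathrm{iv})$ I would substitute $m_-=m_+$ into the criterion \eqref{eq:crit_imaginary} of Theorem~\ref{th:criterion}: the denominator becomes $|m_+(\I y)-\overline{m_+(\I y)}|=2\,\im\, m_+(\I y)$ and the numerator becomes $2\,\re\, m_+(\I y)$, so \eqref{eq:crit_imaginary} reads $\sup_{y>0}\frac{\re\, m_+(\I y)}{\im\, m_+(\I y)}<\infty$, which is precisely $(\mathrm{iv})$. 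For $(\mathrm{i})\Leftrightarrow(\mathrm{iii})$ I would similarly feed $m_-=m_+$ into Theorem~\ref{th:crit_mf}: then $M_A(\lambda)=2\tilde M_+(\lambda)$, so $M_A(\lambda)\ge0$ on the ray $|\re\lambda|/|\lambda|=\cos\theta$ iff $\tilde M_+(\lambda)\ge0$ there, which by the determinant computation $\det M_\pm(\lambda)=-\im\,m_\pm(\lambda)\,\im(\lambda^2 m_\pm(\lambda))$ (and positivity of $\im\, m_+$ in $\C_+$) is equivalent to $-\im(\lambda^2 m_+(\lambda))\ge0$ on that ray, i.e.\ to \eqref{eq:ev_mf}. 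Finally $(\mathrm{iii})\Leftrightarrow(\mathrm{ii})$ is Everitt's Theorem~\ref{th:ev71} (stated there as \eqref{eq:ev71}), closing the cycle.

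Alternatively, and perhaps more cleanly for exposition, I would arrange the proof as a single cycle $(\mathrm{i})\Rightarrow(\mathrm{iii})\Rightarrow(\mathrm{ii})\Rightarrow(\mathrm{iv})\Rightarrow(\mathrm{v})\Rightarrow(\mathrm{ii})\Rightarrow(\mathrm{i})$, drawing each arrow from the theorem indicated above and using $m_-=m_+$ only where Theorems~\ref{th:crit_mf} and~\ref{th:criterion} are invoked. The one point deserving a remark is the implication $(\mathrm{iii})\Rightarrow(\mathrm{i})$, i.e.\ that for odd $r$ the ``large-domain'' condition \eqref{eq:ev_mf} and the ``small-domain'' condition \eqref{eq:m_cond} coincide; this is not automatic (the text notes the converse fails in general) and is exactly what the identity $M_A=2\tilde M_+$ buys us.

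The main obstacle is not an obstacle of depth but of care: verifying the parity identity $m_-(\lambda)=m_+(\lambda)$ with the correct sign conventions for the Neumann Weyl solutions $\psi_\pm$, and then tracking the factors of $2$ through \eqref{eq:crit_imaginary}, through $M_A=\tilde M_++\tilde M_-$, and through $\det M_\pm=-\im\,m_\pm\,\im(\lambda^2 m_\pm)$, so that the reductions to \eqref{eq:crit+}, \eqref{eq:ev71}, and \eqref{eq:ev_mf} are exact rather than merely up to a constant. Once that arithmetic is pinned down, every implication is a direct citation of a result already in the excerpt.
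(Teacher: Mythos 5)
Your proposal is correct and follows essentially the same route as the paper: oddness gives $m_-(\lambda)=m_+(\lambda)$, hence $M_A=2\tilde M_+$, and the determinant identity for $\tilde M_+$ (equivalently $\det M_\pm=-\im m_\pm\,\im(\lambda^2 m_\pm)$) reduces Theorem~\ref{th:crit_mf} to Everitt's condition, while the remaining equivalences are direct citations of Theorems~\ref{th:ev71}, \ref{Bennewitz_HELP} and \ref{th:ak_crit+}. Your alternative derivation of $(\mathrm{i})\Leftrightarrow(\mathrm{iv})$ by substituting $m_-=m_+$ into \eqref{eq:crit_imaginary} is a valid (and slightly more direct) extra shortcut not used in the paper's proof, but it changes nothing essential.
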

\begin{proof}
We only need to establish the equivalence $(i)\Leftrightarrow (ii)$. Since $r$ is odd, we clearly get $m_+(\lambda)=m_-(\lambda)$. Therefore (see \eqref{eq:ti_m}, \eqref{eq:ma}), $M_A(\lambda)=2\tilde{M}_+(\lambda)$. Noting that $\det\, \tilde{M}_+(\lambda)=-\frac{\im(\lambda^2\, m_+(\lambda))}{\im\, m_+(\lambda)}$, Theorems \ref{th:ev71} and \ref{th:crit_mf} complete the proof.
\end{proof}

\subsection{Nonodd weights}\label{ss:nonodd}
In the case of non-odd weights, the validity of \eqref{eq:volkmer} is an open problem. Note that Theorem \ref{th:crit_mf} and Theorem \ref{th:criterion} provide two critera for the validity of Volkmer's inequality in terms of $m$-functions. Our next aim is to apply Bennewitz's criterion in order to get simple sufficient conditions for the validity of \eqref{eq:volkmer} in terms of the weight $r$.

\begin{definition}
Let $r\in L^1(-1,1)$ and $xr(x)>0$ a.e. on $(-1,1)$. We say that the function $r$ satisfies Bennewit's condition at $+0$ ($-0$) if there is $t\in (0,1)$ such that \eqref{Parfenov} is satisfied with $r(x)$ ($|r(-x)|$).
\end{definition}

\begin{lemma}\label{lem:ben+-}
Let $r\in L^1(-1,1)$ and $xr(x)>0$ a.e. on $(-1,1)$.  If $r$ satisfies Bennewitz's condition either at $+0$ or at $-0$, then the inequality \eqref{eq:volkmer} is valid.
\end{lemma}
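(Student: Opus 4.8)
The plan is to reduce the claim, via the second criterion Theorem~\ref{th:criterion}, to the behaviour of $m_+$ and $m_-$ along $\I\R_+$, and then to feed in Bennewitz's Lemma~\ref{lem:Bennewitz} on exactly one side. Concretely, suppose (without loss of generality) that $r$ satisfies Bennewitz's condition at $+0$; then $r(x)$ restricted to $(0,1)$ satisfies Bennewitz's condition in the sense of Section~\ref{sec:help}, so by Lemma~\ref{lem:Bennewitz} (equivalently Corollary~\ref{cor:ben}) we have $|m_+(\lambda)|=O(\im m_+(\lambda))$ as $\lambda\to\infty$ in any nonreal sector; in particular, restricting to $\lambda=\I y$,
\[
\sup_{y>0}\frac{|m_+(\I y)|}{\im m_+(\I y)}<\infty,
\]
since $m_+$ is continuous on $\I\R_+$ away from its pole at $0$ and this pole only helps (there $\im m_+(\I y)\to+\infty$).

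The second step is to control the denominator $|m_+(\I y)-m_-(-\I y)|$ in \eqref{eq:crit_imaginary} from below. Because $m_\pm$ are Herglotz functions with the representation \eqref{eq:m_repr}, one has $\im m_\pm(\I y)>0$ and, more to the point, $m_-(-\I y)=\overline{m_-(\I y)}$, so
\[
\im\bigl(m_+(\I y)-m_-(-\I y)\bigr)=\im m_+(\I y)+\im m_-(\I y)\ge \im m_+(\I y)>0 .
\]
Hence $|m_+(\I y)-m_-(-\I y)|\ge \im m_+(\I y)$ for every $y>0$. For the numerator, I will bound it crudely: $\re(m_+(\I y)+m_-(\I y))\le |m_+(\I y)|+|m_-(\I y)|$. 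The term $|m_-(\I y)|$ is the nuisance, since we have no information on $r$ at $-0$; but here we can use that $\re m_-(\I y)\le |m_-(\I y)|$ and $\im m_-(\I y)>0$ together with the decomposition of the fraction: the $m_+$ part is handled by step one, and for the $m_-$ part we use
\[
\frac{\re m_-(\I y)}{|m_+(\I y)-m_-(-\I y)|}\le \frac{|m_-(\I y)|}{\im m_-(\I y)}\cdot\frac{\im m_-(\I y)}{\im m_+(\I y)+\im m_-(\I y)}\le \frac{|m_-(\I y)|}{\im m_-(\I y)} .
\]
Wait — this still requires $|m_-|=O(\im m_-)$, which is not assumed. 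The cleaner route is to split differently: write the supremum in \eqref{eq:crit_imaginary} as bounded by
\[
\frac{|m_+(\I y)|}{\im m_+(\I y)}\cdot\frac{\im m_+(\I y)}{|m_+(\I y)-m_-(-\I y)|}+\frac{\re m_-(\I y)}{|m_+(\I y)-m_-(-\I y)|},
\]
and for the second summand use $\re m_-(\I y)\le|m_-(\I y)|$ but then observe that $|m_+(\I y)-m_-(-\I y)|\ge|\,|m_-(\I y)|-|m_+(\I y)|\,|$ is not uniform either. The fix is to use $\im$ again: $\re m_-(\I y)\le |m_-(\I y)|$ and $|m_-(\I y)-\I\im m_-(\I y)|=|\re m_-(\I y)|$, so it suffices to bound $\re m_-(\I y)/\im m_-(\I y)$ — which is precisely what is \emph{not} available.

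**The actual mechanism / main obstacle.** So the genuine point, and the step I expect to be the crux, is that the \emph{whole} fraction in \eqref{eq:crit_imaginary} is dominated by $|m_+(\I y)|/\im m_+(\I y)$ alone, using that $m_-(-\I y)=\overline{m_-(\I y)}$ kills the real part of $m_-$ against itself: indeed
\[
\bigl|m_+(\I y)-m_-(-\I y)\bigr|=\bigl|\,m_+(\I y)-\re m_-(\I y)+\I\,\im m_-(\I y)\,\bigr|
\ge \bigl|\re\bigl(m_+(\I y)+m_-(\I y)\bigr)\bigr| - \bigl(|\im m_+(\I y)|+|\im m_-(\I y)|\bigr)\cdot 0 ?
\]
— not quite; the correct elementary inequality is $|a+\I b|\ge |a|$ with $a=\re m_+(\I y)-\re m_-(\I y)$, which is the wrong real part. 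I therefore expect the honest argument to run: by Lemma~\ref{lem:Bennewitz} applied at $+0$ we get $\re m_+(\I y)\le C\,\im m_+(\I y)$; since $m_\pm\to 0$ along $\I\R_+$ (from \eqref{eq:m_repr}, as $y\to\infty$) while near $y=0$ the pole forces $\im m_+\to\infty$, and since $\re m_-\le|m_-|$ with $\im m_-\to\infty$ near $0$ and $m_-\to 0$ near $\infty$, the ratio $\re m_-/\im(m_++m_-)$ is bounded on compact subsets of $(0,\infty)$ and $o(1)$ at both ends, hence globally bounded. Combining with $|m_+(\I y)-m_-(-\I y)|\ge\im m_+(\I y)+\im m_-(\I y)$ and the decomposition of the numerator into $\re m_+$ and $\re m_-$ pieces gives
\[
\sup_{y>0}\frac{\re(m_+(\I y)+m_-(\I y))}{|m_+(\I y)-m_-(-\I y)|}
\le \sup_{y>0}\frac{\re m_+(\I y)}{\im m_+(\I y)}+\sup_{y>0}\frac{\re m_-(\I y)}{\im m_-(\I y)},
\]
where the first supremum is finite by Bennewitz applied at $+0$ and the second is finite because — and this is where I would close the circle — the second summand only needs $\re m_-(\I y)/\im(m_+(\I y)+m_-(\I y))$ bounded, and $\re m_-(\I y)=O(|m_-(\I y)|)=O(\im m_+(\I y)+\im m_-(\I y))$ follows from $m_-(\I y)\to 0$ and $\im m_+(\I y)+\im m_-(\I y)\ge \im m_-(\I y)\gtrsim y^{-1}\to\infty$ near $0$ plus continuity. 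Thus Theorem~\ref{th:criterion} applies and \eqref{eq:volkmer} is valid; the symmetric case of Bennewitz's condition at $-0$ is obtained by interchanging the roles of $m_+$ and $m_-$ (equivalently, by the substitution $x\mapsto -x$). The main obstacle, then, is not any single estimate but organizing the split so that the side with \emph{no} hypothesis contributes only through quantities that are automatically controlled by the Herglotz structure and the pole at $0$.
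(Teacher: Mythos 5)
Your reduction to Theorem \ref{th:criterion} and the use of Lemma \ref{lem:Bennewitz} on the $+$ side are fine, but the step that closes the argument is false. You bound the denominator of \eqref{eq:crit_imaginary} only by $\im m_+(\I y)+\im m_-(\I y)$ and then claim that $\re m_-(\I y)=O\big(\im m_+(\I y)+\im m_-(\I y)\big)$ as $y\to\infty$ "follows from $m_-(\I y)\to 0$ plus the pole at $0$ plus continuity". It does not: as $y\to\infty$ both $\re m_-(\I y)$ and $\im m_\pm(\I y)$ tend to $0$, and the ratio $\re m_-(\I y)/\im m_-(\I y)$ is exactly the quantity that Bennewitz's condition at $-0$ (which you do not assume) controls; by Theorem \ref{Bennewitz_HELP} and Theorem \ref{th:ak_crit+} it is unbounded whenever Bennewitz's condition fails at $-0$, and such weights exist (this is the whole point of Volkmer's counterexamples). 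If the Herglotz structure and the pole at $0$ alone forced your bound, the same reasoning applied to $m_+$ would show that \eqref{eq:crit+}, hence the HELP inequality \eqref{eq:help}, holds for \emph{every} positive weight, which is false. So your final display, whose right-hand side is $\sup_y \re m_+/\im m_+ + \sup_y \re m_-/\im m_-$, is a true inequality but useless: the second supremum need not be finite under the hypothesis, and indeed $\re m_-(\I y)/(\im m_+(\I y)+\im m_-(\I y))$ can genuinely be unbounded even when the lemma's conclusion holds.

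The missing idea is the one you flirted with and discarded: the denominator $|m_+(\I y)-m_-(-\I y)|=|m_+(\I y)-\overline{m_-(\I y)}|$ dominates not only $\im m_+(\I y)+\im m_-(\I y)$ but also $|\re m_+(\I y)-\re m_-(\I y)|$. Hence if the ratio in \eqref{eq:crit_imaginary} blows up along a sequence $y_n$, then necessarily $\re m_+(\I y_n)=\re m_-(\I y_n)(1+o(1))$ and $\im m_\pm(\I y_n)=o(\re m_\pm(\I y_n))$ for \emph{both} signs; since $\I y_n$ lies in a nonreal sector, the contrapositive of Lemma \ref{lem:Bennewitz} then shows that $r$ fails Bennewitz's condition at $+0$ \emph{and} at $-0$, contradicting the hypothesis. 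This is essentially how the paper argues, in contrapositive form via the machinery already in place: if \eqref{eq:volkmer} is not valid, Corollary \ref{cor:nes_2B} and step (i) of the proof of Lemma \ref{lem:5.4} produce a sequence $\lambda_n$ with \eqref{eq:seq_z} and \eqref{eq:5.02}, and Lemma \ref{lem:Bennewitz} then rules out Bennewitz's condition at both endpoints. So the blow-up, when it happens, is automatically symmetric in the two half-lines; no decomposition of the numerator into separately controlled $m_+$ and $m_-$ pieces can see this, which is why your split cannot be repaired without bringing in the real-part difference in the denominator.
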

\begin{proof}
The proof immediately follows from the proof of Lemma \ref{lem:5.4}. Namely, assume that \eqref{eq:volkmer} is not valid. Then there is a sequence $\lambda_n$ satisfying \eqref{eq:seq_z} and \eqref{eq:5.02}. Lemma \ref{lem:Bennewitz} implies that $r$ does not satisfy Bennewitz's condition at both $+0$ and $-0$.
\end{proof}

\begin{remark}
Note that one can easily get sufficient conditions for the validity of \eqref{eq:volkmer} by applying Theorem \ref{lem:vol} and known sufficient conditions for the Riesz basis property (see, e.g., \cite{BF2, Par03, Vol_96} and references therein). In particular, Lemma \ref{lem:ben+-} follows from Theorem \ref{lem:vol} and Corollary 4 from \cite{Par03}.
\end{remark}

\subsection{Nonsymmetric scaling}\label{ss:scaling}

In this subsection we concentrate our attention on a particular class of non-odd weights 
\be\label{eq:tilde_r}
\tilde{r}(x):=\begin{cases}
r(x),& x\in(0,1),\\
-a^2r(-ax),&x\in (-1/a,0),
\end{cases}
\ee
where $a>0$ is a positive constant and $r:(0,1)\to \R_+$. Namely we consider \eqref{eq:volkmer} with $\tilde{r}$ in place of $r$ and on the interval $[-1/a,1]$ instead of $[-1,1]$. 
\begin{lemma}\label{lem:scaling}
Let $a>0$ and $\tilde{r}$ be given by \eqref{eq:tilde_r}, where $r\in L^1(0,1)$ is positive a.e. on $(0,1)$. If $a\neq 1$, then \eqref{eq:volkmer} is valid with  
\[
K_a:=\left|\frac{1+a}{1-a}\right|. 
\]
Moreover, the constant $K_a$ is the best possible.
\end{lemma}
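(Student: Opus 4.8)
The plan is to convert the scaling \eqref{eq:tilde_r} into an explicit relation between the two Weyl--Titchmarsh functions and to feed that relation into the first criterion, Theorem~\ref{th:crit_mf}; this delivers the validity of \eqref{eq:volkmer} together with the value of the constant in one stroke, and the sharpness of $K_a$ falls out of the same matrix identity. First I would compute $m_-$ in terms of $m_+$. On $(-1/a,0)$ one has $|\tilde r(x)|=a^2 r(-ax)$, and the substitution $t=-ax$, which maps $(-1/a,0)$ onto $(0,1)$, turns $-(|\tilde r|^{-1}y')'=\lambda y$ into $-(r^{-1}z')'=\lambda z$ for $z(t):=y(-t/a)$, with $|\tilde r(x)|^{-1}y'(x)=-\tfrac1a\,(r^{-1}z')(t)$; in particular the Neumann condition at $x=-1/a$ becomes that at $t=1$. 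Carrying the normalizations \eqref{eq:fs} at $x=0$ through this change of variable and imposing \eqref{eq:weyl_s} on the left interval, one obtains
\[
m_-(\lambda)=a\,m_+(\lambda),\qquad \lambda\in\C_+
\]
(for $a=1$ this recovers the odd case $m_-=m_+$).

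Next I would substitute $m_-=a\,m_+$ into the matrix $M_A$ of Theorem~\ref{th:crit_mf}. Writing $\lambda=(\kappa+\I)y$ with $u:=\re m_+(\lambda)$ and $v:=\im m_+(\lambda)>0$, the $(1,1)$-entry of $M_A(\lambda)$ equals $\im m_+(\lambda)+\im m_-(\lambda)=(1+a)v>0$, so $M_A(\lambda)\ge0$ iff $\tilde M_A(\lambda)\ge0$; and Lemma~\ref{lem:det_a} collapses $\tilde M_A(\lambda)$ to the quadratic form
\[
\tilde M_A(\lambda)=(1-a)^2u^2-8a\kappa\,uv+\bigl[(1+a)^2-4a\kappa^2\bigr]v^2 .
\]
Since $a\neq1$ forces $(1-a)^2>0$, this form is nonnegative for \emph{all} $(u,v)\in\R^2$ exactly when its discriminant in $u$ is $\le0$; by the identity $(1-a)^2+4a=(1+a)^2$ this reduces to $\kappa^2\le(1-a)^2/(4a)$. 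Hence $M_A((\kappa+\I)y)\ge0$ for every $y>0$ as soon as $|\kappa|\le|1-a|/(2\sqrt a)$. For $\kappa=|1-a|/(2\sqrt a)$ one has $|\re\lambda|/|\lambda|=\kappa/\sqrt{\kappa^2+1}=|1-a|/(1+a)=:\cos\theta_a$, so \eqref{eq:m_cond} holds with $\theta=\theta_a$ (indeed for every $\theta\ge\theta_a$), and Theorem~\ref{th:crit_mf} yields that \eqref{eq:volkmer} is valid and that its best constant is $\le 1/\cos\theta_a=\bigl|(1+a)/(1-a)\bigr|=K_a$.

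It remains to see that $K_a$, which does not depend on $r$, is best possible, i.e.\ cannot be lowered uniformly over the family \eqref{eq:tilde_r}. By Theorem~\ref{th:crit_mf} it suffices to produce one admissible $r$ such that, for every $\kappa>|1-a|/(2\sqrt a)$, the matrix $M_A$ fails to be nonnegative somewhere on the ray $\{(\kappa+\I)y:y>0\}$: this forces the infimum $\theta_0$ of Theorem~\ref{th:crit_mf} to satisfy $\theta_0\ge\theta_a$, whence, with the bound of the previous paragraph, the best constant equals $1/\cos\theta_a=K_a$. From the displayed quadratic, $\tilde M_A((\kappa+\I)y)<0$ as soon as the ratio $\re m_+((\kappa+\I)y)/\im m_+((\kappa+\I)y)$ enters the nonempty open interval on which $t\mapsto(1-a)^2t^2-8a\kappa t+(1+a)^2-4a\kappa^2$ is negative (nonempty precisely because $\kappa>|1-a|/(2\sqrt a)$). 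Choosing $r$ for which the HELP inequality for $r$ on $(0,1)$ fails --- such $r$ exist, e.g.\ any $r$ with $S_0\equiv1$ --- and repeating steps (ii)--(iii) of the proof of Lemma~\ref{lem:5.4} (from $S_0\equiv1$ and the uniform Weyl-circle asymptotics recalled in the Appendix, $m_+$ is, after rescaling, asymptotically on a circle tangent to $\R$ at a positive point), one gets that this ratio is unbounded in $y$ and so eventually lands in the bad interval, giving $\det M_A<0$ somewhere on $\{(\kappa+\I)y:y>0\}$. I expect this last point to be the only real obstacle: the Bennewitz machinery naturally yields the ``asymptotically real'' behaviour of $m_+$ along asymptotically imaginary sequences, and the work lies in transferring it to a genuine ray with parameter bounded away from $0$ --- exactly where the uniform (over compact subsets of $\C_+\setminus\R$ for the spectral parameter) version of the Weyl-circle asymptotics is needed. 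Steps~1 and~2, by contrast, are routine once the substitution and the identity $(1-a)^2+4a=(1+a)^2$ are noted.
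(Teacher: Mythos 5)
Your proof is correct and, for the main assertion, follows essentially the same route as the paper: derive $m_-(\lambda)=a\,m_+(\lambda)$ from the substitution $t=-ax$, feed this into Theorem~\ref{th:crit_mf}, and reduce positivity of $M_A$ on the ray $\lambda=(\kappa+\I)y$ to the discriminant condition $\kappa^2\le(1-a)^2/(4a)$ for the quadratic form in $\bigl(\re m_+,\im m_+\bigr)$ (the paper computes $\det M_A$ directly and completes the square, you route it through Lemma~\ref{lem:det_a}; the resulting quadratic is the same). The one genuine difference is the sharpness claim. The paper stops at ``$M_A\ge0$ precisely if $k^2\le d$,'' which is a statement about the quadratic form on all of $\R^2$, not about the values $\bigl(\re m_+(\lambda),\im m_+(\lambda)\bigr)$ actually attained along a given ray; for a specific $r$ these values may avoid the negativity cone, so the best constant for that $r$ could be smaller than $K_a$. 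You correctly read ``best possible'' as optimality over the family \eqref{eq:tilde_r} and supply a witness: a weight with $S_0\equiv1$, for which the Weyl-circle asymptotics (uniform on compacta, as in steps (ii)--(iii) of Lemma~\ref{lem:5.4}) make $\re m_+/\im m_+$ arbitrarily large along rays $\mu\rho_n$ with $\im\mu$ large, while near $y=0$ the pole of $m_+$ at $\lambda=0$ forces this ratio to be negative; continuity then drives it through the bounded ``bad interval'' for every $\kappa>|1-a|/(2\sqrt a)$. This is a sketch, but the plan is sound and actually fills a gap the paper leaves implicit, so I have no objection.
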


\begin{proof}
Without loss of generality we can assume that $a>1$. Let $m_+$ and $m_-$ be the corresponding $m$-function. 
Firstly, observe that for weights \eqref{eq:tilde_r} the $m$-functions $m_+$ and $m_-$ are connected by 
\[
m_-(\lambda)=am_+(\lambda),\quad \lambda\in\C_+.
\]
Indeed, let $m$ and $m_{c}$ be the m-functions corresponding to Neumann problems for equations
\[
-(r(x)^{-1}y')'= \lambda\, y,\quad x\in [0,1]
\]
and 
\[
-((a^2r(ax))^{-1}y')'= \lambda\, y,\quad x\in [0,1/a],
\]
respectively. Then the fundamental systems solutions of these equations are connected by 
\[
\tilde{c}(x,\lambda) = c(ax,\lambda),\quad \tilde{s}(x,\lambda)=a\, s(ax,\lambda)
\]
Moreover,Weyl solutions satisfy 
\[
\tilde{\psi}(x,\lambda)=\tilde{s}(x,\lambda) - am(\lambda)\tilde{c}(x,\lambda)=a\psi(ax,\lambda).
\]
Therefore, we get
\be\label{eq:connection}
m_{a}(\lambda)=am(\lambda).
\ee

Further, by \eqref{eq:ma} we obtain
\[
M_A(\lambda)=\frac{1}{\im m_+(\lambda)}\left( \begin{array}{cc}
(1+1/a)\im\, \lambda & -2\im (\lambda m_+(\lambda))\\
 -2\im (\lambda m_+(\lambda)) & (1+a) \im\, \lambda |m_+(\lambda)|^2
\end{array}\right),
\]
and hence
\[
\det M_A(\lambda)=4 \big(\im\, m_+(\lambda)\big)^2\big((1+d) (\im\, \lambda\,|m_+(\lambda)|)^2- (\im (\lambda m_+(\lambda)))^2 \big),
\]
where $d=(1+1/a)(1+a)/4-1>0$. Then, setting $\lambda=(k+\I)y$, after straightforward calculations we obtain
\[
\frac{\det M_A(\lambda)}{4y^2\big(\im\, m_+(\lambda)\big)^2}=\big( \sqrt{d}\re\, m_+(\lambda) - \frac{k}{\sqrt{d}} \im\, m_+(\lambda)\big)^2 + (1+d-k^2 -\frac{k^2}{d})(\im\, m_+(\lambda))^2.
\]
Hence $M_A$ is nonnegative precisely if
\[
k^2\le d.
\]
Therefore, by Theorem \ref{th:crit_mf},  \eqref{eq:volkmer} holds true with $K=\sqrt{\frac{1+d}{d}}=\left|\frac{1+a}{1-a}\right|$ and this constant is the best possible. 
\end{proof}
\begin{remark}
By Lemma \ref{lem:ben+-}, for the validity of \eqref{eq:volkmer} it suffices that $r$ satisfies Bennewitz's condition either at $+0$ or at $-0$. Moreover, it becomes necessary if $r$ is odd. However, Lemma \ref{lem:scaling} shows that for nonodd $r$ Bennewitz's condition is not necessary. 
\end{remark}

\section{On a connection with the linear resolvent growth condition and the Riesz basis property of eigenfunctions}\label{sec:LRG}

Let $r \in L^1(-1,1)$ be real and $xr(x)>0$ for almost all $x\in[-1,1]$. Consider the regular indefinite
Sturm-Liouville eigenvalue problem
\begin{align}\label{evp}
&-f'' = {\lambda} r(x) f
\quad \mbox{on} \quad [-1,1], \\
&f(-1) = f(1) = 0.\nn 
\end{align}
Consider the corresponding operator $H$ in $L^2_{|r|}(-1,1)$ 
\be\label{eq:H}
H:=-\frac{1}{r(x)}\frac{d^2}{dx^2},\quad \dom(H)=\{f\in L^2_{|r|}(-1,1):\ f,f'\in AC[-1,1],\ f(\pm 1)=0,\ r^{-1}f''\in L^2_{|r|}\}.
\ee
The operator $H$ is non self-adjoint in the Hilbert space $L^2_{|r|}(-1,1)$. However, its spectrum is real,  discrete and all eigenvalues are simple. 
There are several necessary and sufficient conditions for $H$ to be similar to a self-adjoint operator, or equivalently, for the eigenfunctions of \eqref{evp} to form a Riesz basis of $L^2_{|r|}(-1,1)$. Notice that Volkmer's Theorem provides a necessary condition. Another necessary condition based on the linear resolvent growth (LRG) condition was obtained in \cite{KarKos} (see also \cite{KKM09}). The main aim of this section is to show that these two necessary conditions are closely connected. 

We need some preliminary notation. Consider the auxiliary spectral problem 
\be\label{eq:sp_r}
-f'' = {\lambda} |r(x)| f,\quad x\in [-1,1];\qquad f(-1) = f(1) = 0.
\ee
With \eqref{eq:sp_r} one naturally associates the $m$-functions $m_+^r$ and $m_-^r$. Namely, let $c^r(x,\lambda)$ and $s^r(x,\lambda)$ be the fundamental system of solutions of equation \eqref{eq:sp_r}, 
\[
c^r(0,\lambda)=(s^r)'(0,\lambda)=1,\quad s^r(0,\lambda)=(c^r)'(0,\lambda)=0.
\]
The corresponding Weyl solutions are defined as follows
\be\label{eq:ws_r}
\psi_\pm^r(x,\lambda)=c^r(x,\lambda)\pm m^r_\pm(\lambda)s^r(x,\lambda),\quad \psi_\pm^r(\pm1,\lambda)=0.
\ee
The functions $m_+^r$ and $m_-^r$ in \eqref{eq:ws_r} are called {\em the Weyl--Titchmarsh $m$-functions} for the problem \eqref{eq:sp_r} subject to the Dirichlet conditions. 

\begin{theorem}[\cite{KarKos}]\label{kostenko}
Let $m_\pm^r$ be the $m$-functions defined by \eqref{eq:ws_r}. If the eigenvalues of \eqref{evp} form a Riesz basis of $L^2_{|r|}(-1,1)$, then
\begin{equation}\label{eq:kostenko}
\sup_{\lambda\in \C_+}\frac{\im\, m_\pm^r(\lambda)}{|m_+^r(\lambda)+m_-^r(-\lambda)|}<+\infty.
\end{equation}
Moreover,  if the operator $H$ satisfies the linear resolvent growth condition, i.e., there is $C>0$ such that
\be\label{lrg}
\|(H-\lambda)^{-1}\|\le \frac{C}{\im \, \lambda},\quad (\lambda\in \C\setminus\R),
\ee
then \eqref{eq:kostenko} is satisfied.
\end{theorem}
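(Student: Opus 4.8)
The plan is to derive both assertions from the single implication ``\eqref{lrg}$\Rightarrow$\eqref{eq:kostenko}'', and to prove that by expressing the resolvent of $H$ through $m_+^r$ and $m_-^r$. For the reduction, recall that the eigenfunctions of \eqref{evp} form a Riesz basis of $L^2_{|r|}(-1,1)$ if and only if $H$ is similar to a self-adjoint operator, $H=R^{-1}SR$ with $R,R^{-1}$ bounded and $S=S^*$; since $\|(S-\lambda)^{-1}\|=|\im\lambda|^{-1}$ this forces $\|(H-\lambda)^{-1}\|\le\|R\|\,\|R^{-1}\|\,|\im\lambda|^{-1}$, that is, \eqref{lrg}. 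Hence it suffices to prove the second claim. I also record the factorization $H=J\tilde H$, where $\tilde H:=-|r|^{-1}\frac{d^2}{dx^2}$ with Dirichlet boundary conditions is self-adjoint and positive in $L^2_{|r|}(-1,1)$ and $J\colon f\mapsto(\sgn x)f$ is unitary with $J^2=I$; since $\sigma(H)\subset\R$, the pencil $\tilde H-\lambda J$ is boundedly invertible for $\lambda\notin\R$ and $(H-\lambda)^{-1}=(\tilde H-\lambda J)^{-1}J$, so that $\|(H-\lambda)^{-1}\|=\|(\tilde H-\lambda J)^{-1}\|=\|(H^*-\bar\lambda)^{-1}\|$.

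The heart of the proof is a lower bound for this resolvent. For $\lambda\in\C_+$ let $\phi_-(\cdot,\lambda)$, resp.\ $\phi_+(\cdot,\lambda)$, be the solution of $-f''=\lambda r f$ on $[-1,1]$ fixed by the Dirichlet condition at $-1$, resp.\ at $1$. On $(-1,0)$ one has $\phi_-\propto\psi_-^r(\cdot,-\lambda)$ and on $(0,1)$ one has $\phi_+\propto\psi_+^r(\cdot,\lambda)$, and computing the (constant) Wronskian $W[\phi_-,\phi_+]$ at $x=0$ shows that, up to the nonvanishing normalizing factors $(\psi_+^r)'(1,\lambda)$ and $(\psi_-^r)'(-1,-\lambda)$ of the Weyl solutions, it equals $\omega(\lambda):=m_+^r(\lambda)+m_-^r(-\lambda)$, the function whose zeros are the eigenvalues of \eqref{evp}. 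Plugging this into the Green's-function representation of $(H-\lambda)^{-1}$ and testing on functions $g$ supported in $(-1,0)$ and proportional to $\overline{\phi_-}$ there (so that $(H-\lambda)^{-1}g$ is a multiple of $\phi_+$ on $(0,1)$), then using the Weyl-type identities
\[
\|\psi_+^r(\cdot,\lambda)\chi_+\|_{|r|}^2=\frac{\im m_+^r(\lambda)}{\im\lambda},\qquad
\|\psi_-^r(\cdot,-\lambda)\chi_-\|_{|r|}^2=\frac{-\im m_-^r(-\lambda)}{\im\lambda}
\]
(proved exactly as \eqref{eq:psinorm}, via Green's identity and the Dirichlet conditions at $\pm1$) and optimizing over $g$, one arrives at
\[
\|(H-\lambda)^{-1}\|\ \ge\ \frac{\sqrt{\im m_+^r(\lambda)\,\bigl(-\im m_-^r(-\lambda)\bigr)}}{|\im\lambda|\,\bigl|m_+^r(\lambda)+m_-^r(-\lambda)\bigr|},\qquad\lambda\in\C_+ .
\]
Equivalently, one may work with $\tilde H-\lambda J$ and the quasi-eigenfunction $\psi_+^r(\cdot,\lambda)\chi_++\psi_-^r(\cdot,-\lambda)\chi_-$, which is continuous at $0$, meets the Dirichlet conditions at $\pm1$, solves $(\tilde H-\lambda J)\,\cdot=0$ away from $x=0$, and has derivative jump $\omega(\lambda)$ at $0$; regularizing it near $0$ produces the same estimate. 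Running the same construction with $\tilde H+\lambda J=\tilde H-(-\lambda)J$, whose resolvent has norm $\|(H+\lambda)^{-1}\|$, gives the companion bound with $\pm$ and $\pm\lambda$ interchanged.

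To conclude, combine the displayed inequality with \eqref{lrg} (which, by the first paragraph, holds at both $\lambda$ and $-\lambda$): this gives $\sqrt{\im m_+^r(\lambda)\,\bigl(-\im m_-^r(-\lambda)\bigr)}\le C\,|\omega(\lambda)|$ on $\C_+$, and its companion. Writing $A=\im m_+^r(\lambda)>0$, $B=-\im m_-^r(-\lambda)>0$ and noting $|\omega(\lambda)|\ge|\im\omega(\lambda)|=|A-B|$, a short case distinction (according as $A\le4B$ or $A>4B$) upgrades $\sqrt{AB}\le C|\omega(\lambda)|$ to $A\le C'|\omega(\lambda)|$ and $B\le C'|\omega(\lambda)|$, and the companion bound yields the analogue for $m_-^r$. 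This is \eqref{eq:kostenko}.

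\textbf{Main obstacle.} The crux is the resolvent lower bound and its \emph{uniformity} in $\lambda\in\C_+$: the Weyl solutions $\psi_\pm^r$ do not lie in $\dom(H)$ (they fail the transmission conditions at $x=0$), the weight $|r|$ is merely $L^1$, so the regularization at $x=0$ must be controlled carefully, and the normalizing factors $(\psi_+^r)'(1,\lambda)$ and $(\psi_-^r)'(-1,-\lambda)$ must be tracked precisely so that they cancel and leave exactly $\omega(\lambda)$ in the denominator; the finitely many $\lambda$ at which $m_\pm^r$ has a pole are handled separately by continuity. Everything else is routine bookkeeping.
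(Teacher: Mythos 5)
The paper does not prove this theorem at all: it is quoted from \cite{KarKos}, so there is no internal argument to compare yours against. Your self-contained derivation is essentially the standard one and is sound. The reduction (Riesz basis $\Rightarrow$ similarity to self-adjoint $\Rightarrow$ LRG) is correct, and the key lower bound checks out: taking $g=c\,\overline{\phi_-}$ on $(-1,0)$ in the Green's-function formula, with $\phi_+=\psi_+^r(\cdot,\lambda)$, $\phi_-=\psi_-^r(\cdot,-\lambda)$ normalized to equal $1$ at $x=0$, the Wronskian is $-(m_+^r(\lambda)+m_-^r(-\lambda))=-\omega(\lambda)$ and one gets $\|(H-\lambda)^{-1}\|\ge\sqrt{AB}\,/\,(\im\lambda\,|\omega(\lambda)|)$ with $A=\im m_+^r(\lambda)$, $B=-\im m_-^r(-\lambda)$; combined with \eqref{lrg} and $|\omega|\ge|\im\omega|=|A-B|$, your case distinction correctly upgrades $\sqrt{AB}\le C|\omega|$ to $A,B\le C'|\omega|$. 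Note that in this Green's-function form the ``main obstacle'' you flag largely evaporates: the test function is a genuine $L^2_{|r|}$ element, no quasi-mode regularization at $x=0$ is needed, and the poles of $m_\pm^r$ are real, hence never met for $\lambda\in\C_+$.

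One point deserves care. What your argument controls for the minus sign is $B(\lambda)=-\im m_-^r(-\lambda)=\im m_-^r(-\bar\lambda)$ against $|m_+^r(\lambda)+m_-^r(-\lambda)|$; this coincides with the stated numerator $\im m_-^r(\lambda)$ only on the imaginary axis. Your ``companion bound'' at $-\lambda$ is, after the reflection $\lambda\mapsto-\bar\lambda$ and the symmetry $m(\bar z)=\overline{m(z)}$, the \emph{same} family of inequalities, so it yields $\im m_-^r(\lambda)\le C'|m_+^r(-\lambda)+m_-^r(\lambda)|$ rather than the literal \eqref{eq:kostenko} with denominator $|m_+^r(\lambda)+m_-^r(-\lambda)|$; these denominators are not obviously comparable off $\I\R_+$. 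This is best viewed as an ambiguity in the quoted statement itself (the natural invariant quantities are $A(\lambda)/|\omega(\lambda)|$ and $B(\lambda)/|\omega(\lambda)|$, which is exactly what you prove), and it is immaterial for the only use the paper makes of the theorem, namely Theorem \ref{th:connection}, which restricts to $\lambda=\I y$ where $\im m_-^r(\I y)=-\im m_-^r(-\I y)$. With that caveat stated explicitly, your proof is complete.
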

\begin{remark}
Note that in \cite{KarKos} Theorem \ref{kostenko} was established for singular indefinite Sturm--Liouville problems under the assumption that the corresponding Sturm--Liouville differential expression is limit point at both singular endpoints. However, the result remains true in a general situation, in particular for regular problems with weight functions having only one turning point. Moreover, it is shown in \cite{Mal} that the conditions \eqref{eq:kostenko} and \eqref{lrg} are equivalent even in an abstract operator theoretic setting. 
\end{remark}
The linear resolvent growth condition as well as Volkmer's inequality are necessary for the Riesz basis property of eigenfunctions of \eqref{evp}. Theorem \ref{th:criterion} enables us to establish a connection between these two necessary conditions.
\begin{theorem}\label{th:connection}
If the operator $H$ defined by \eqref{eq:H} satisfies the linear resolvent growth condition \eqref{lrg}, then the Volkmer inequality \eqref{eq:volkmer} is valid.
\end{theorem}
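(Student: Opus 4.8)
The plan is to deduce the validity of \eqref{eq:volkmer} from the linear resolvent growth condition \eqref{lrg} by running it through the chain of already-established results. The key intermediate object is the criterion \eqref{eq:crit_imaginary} of Theorem \ref{th:criterion}: it suffices to show that if $H$ satisfies \eqref{lrg}, then
\[
\sup_{y>0}\frac{\re(m_+(\I y)+m_-(\I y))}{|m_+(\I y)-m_-(-\I y)|}<+\infty .
\]
First I would invoke Theorem \ref{kostenko}: the LRG condition \eqref{lrg} implies the estimate \eqref{eq:kostenko}, i.e.
\[
\sup_{\lambda\in\C_+}\frac{\im\, m_\pm^r(\lambda)}{|m_+^r(\lambda)+m_-^r(-\lambda)|}<+\infty ,
\]
for the $m$-functions $m_\pm^r$ of the problem \eqref{eq:sp_r}. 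So the whole argument reduces to a purely function-theoretic passage from the estimate \eqref{eq:kostenko} for $m_\pm^r$ to the estimate \eqref{eq:crit_imaginary} for $m_\pm$.

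The bridge is the relation $m_\pm^r(\lambda)=\lambda\, m_\pm(\lambda)$ mentioned in the introduction (formula \eqref{eq:m_conn}), which holds because the Dirichlet $m$-function of $-f''=\lambda|r|f$ is obtained from the Neumann $m$-function of $-(|r|^{-1}f')'=\lambda f$ by this simple scaling. Substituting $\lambda=\I y$ with $y>0$ gives $m_\pm^r(\I y)=\I y\, m_\pm(\I y)$ and $m_-^r(-\I y)=-\I y\, m_-(-\I y)$, hence
\[
\im\, m_+^r(\I y)=y\,\re\, m_+(\I y),\qquad
|m_+^r(\I y)+m_-^r(-\I y)|=y\,|m_+(\I y)-m_-(-\I y)| .
\]
Therefore the left-hand side of \eqref{eq:kostenko} restricted to the imaginary semi-axis $\I\R_+$ equals exactly
\[
\frac{\re\, m_+(\I y)}{|m_+(\I y)-m_-(-\I y)|}
\]
(and the analogous expression with $m_-$ in the numerator using the lower sign), so the supremum over $y>0$ of each of these is dominated by the supremum in \eqref{eq:kostenko}, hence finite. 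Adding the two estimates and using $\re\, m_+(\I y)$, $\re\, m_-(\I y)\ge 0$ (the $m_\pm$ are Herglotz, so their values on $\I\R_+$ have nonnegative real part) yields the required bound on $\re(m_+(\I y)+m_-(\I y))/|m_+(\I y)-m_-(-\I y)|$, and Theorem \ref{th:criterion} then gives the validity of \eqref{eq:volkmer}.

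I do not expect a serious obstacle here: the statement is essentially a transcription of \eqref{eq:kostenko} along $\I\R_+$ through the elementary identity $m_\pm^r=\lambda m_\pm$. The one point that needs a line of care is the passage from the two separate bounds (numerators $\re\, m_+$ and $\re\, m_-$) to the combined numerator $\re(m_++m_-)$; this is immediate because the denominator is the same and both real parts are nonnegative, so $\re(m_++m_-)\le 2\max\{\re\, m_+,\re\, m_-\}$. One should also note that \eqref{eq:kostenko} is stated for all $\lambda\in\C_+$, which is far more than needed — only the restriction to $\lambda\in\I\R_+$ enters — so no sectorial information about $m_\pm$ is required, consistent with the philosophy of Theorem \ref{th:criterion}.
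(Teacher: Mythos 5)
Your proposal is correct and follows essentially the same route as the paper: Theorem \ref{kostenko} gives \eqref{eq:kostenko}, the identity $m_\pm^r(\lambda)=\lambda\,m_\pm(\lambda)$ restricted to $\lambda=\I y$ converts the bound into \eqref{eq:crit_imaginary}, and Theorem \ref{th:criterion} concludes. The only difference is cosmetic: the paper derives \eqref{eq:m_conn} explicitly from the relations between the fundamental solutions of the two problems, whereas you cite it; also, your appeal to nonnegativity of $\re\,m_\pm(\I y)$ is unnecessary, since the real part is additive and the two bounded ratios sum directly.
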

\begin{proof}
Let $m_\pm^r$ and $m_\pm$ be the $m$-functions defined by \eqref{eq:ws_r} and \eqref{eq:weyl_s}, respectively. Note that the solutions of \eqref{eq:sp_r} and the solutions of \eqref{eq:fs} are  connected as follows 
\[
c(x,\lambda)=(s^r)'(x,\lambda),\quad s(x,\lambda)=-\frac{1}{\lambda}(c^r)'(x,\lambda),
\]
and
\[
\psi_\pm(x,\lambda)=-\frac{1}{\lambda}(\psi_\pm^r)'(x,\lambda).
\]
Therefore, \eqref{eq:ws_r} and \eqref{eq:weyl_s} immediately imply
\be\label{eq:m_conn}
m_\pm(\lambda)=\frac{1}{\lambda}m_\pm^r(\lambda),\quad (\lambda\in \C\setminus\R_+).
\ee
Now, since $H$ satisfies \eqref{lrg}, by Theorem \ref{kostenko} we see that $m_+^r$ and $m_-^r$ satisfy \eqref{eq:kostenko}. The latter in particular implies 
\[
\sup_{y>0}\frac{\im(m_+^r(\I y)+m_-^r(\I y))}{|m_+^r(\I y)+m_-^r(-\I y)|}<\infty.
\]
However, using \eqref{eq:m_conn}, we get that
\[
m_\pm^r(\pm \I y)= \pm \I y \, m_\pm(\pm \I y),\quad (y>0),
\]
and hence
\[
\frac{\im(m_+^r(\I y)+m_-^r(\I y))}{|m_+^r(\I y)+m_-^r(-\I y)|}=\frac{\im(\I y\, m_+(\I y)+\I y\, m_-(\I y))}{|\I y\, m_+(\I y)-\I y\, m_-(-\I y)|}=\frac{\re(m_+(\I y)+m_-(\I y))}{|m_+(\I y)-m_-(-\I y)|},\quad (y>0).
\]
The latter and \eqref{eq:kostenko} yield \eqref{eq:crit_imaginary} and hence, by Theorem \ref{th:criterion}, Volkmer's inequality \eqref{eq:volkmer} is valid.
\end{proof}
It is interesting to note that in the case of odd $r$ both necessary conditions coincide and become also sufficient. This enables us to extend the list of various criteria for the Riesz basis property presented in \cite{BF2}.
\begin{theorem}\label{th:sim_odd}
Let the function $r\in L^1(-1,1)$ be odd and $xr(x)>0$ a.e. on $[-1,1]$.  Let $H$ be the operator \eqref{eq:H} associated with the problem \eqref{evp} and let $m_+^r$ be the $m$-function defined by \eqref{eq:ws_r}. The following statements are equivalent:
\begin{enumerate}
\item[(i)] the eigenvalue problem (\ref{evp}) has the Riesz basis property.
\item[(ii)] the operator $H$ satisfies the LRG condition \eqref{lrg}.
\item[(iii)] the Volkmer inequality \eqref{eq:volkmer} is valid.
\item[(iv)] the function $r$ satisfies Bennewitz's condition \eqref{Parfenov} at $x=0$.
 \item[(v)]
\begin{equation}
\sup_{\lambda\in \C_+}\frac{\im\, m_+^r(\lambda)}{|m_+^r(\lambda)-m_+^r(-\lambda)|}<+\infty.
\end{equation}
\item[(vi)]
\begin{equation}\label{eq:kostenko_imre}
\sup_{y>0}\frac{\im\, m_+^r(\I y)}{|\re\, m_+^r(\I y)|}<+\infty.
\end{equation}
\item[(vii)] There is $\theta\in (0,\frac{\pi}{2})$ such that
\be\label{Everitt-m-function_3}
-\im \big( \lambda \, m_+^r(\lambda)\big)\ge 0,\qquad ( \lambda\in \Gamma_{\theta}),
\ee
where $\Gamma_{\theta}=\{\lambda\in \C_+: \frac{|\re\, \lambda|}{|\lambda|}\le \cos  \theta\}$.
\end{enumerate}
\end{theorem}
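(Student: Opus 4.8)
The plan is to close the chain of seven equivalences using only a handful of implications, most of which are already available. The backbone is the classical fact, valid for odd $r$, that the Riesz basis property of \eqref{evp} is equivalent to the validity of the HELP inequality \eqref{eq:help} on $(0,1)$ (see \cite{BF,Ben87,BF2}), which by Theorem~\ref{Bennewitz_HELP} is in turn equivalent to Bennewitz's condition \eqref{Parfenov}; this already gives (i) $\Leftrightarrow$ (iv). What remains is to wedge the LRG condition (ii) and the Volkmer inequality (iii) into this chain and to identify the $m^r$-reformulations (v), (vi), (vii). Two elementary facts about the odd case are used throughout: reflecting solutions across $x=0$ (where $|r|$ is even) shows $m_+=m_-$ for the Neumann $m$-functions \eqref{eq:weyl_s}, and hence also for the $m$-functions $m_\pm^r$ of \eqref{eq:ws_r}; and \eqref{eq:m_conn} gives $m_\pm^r(\lambda)=\lambda\,m_\pm(\lambda)$ on $\C_+$.

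First I would handle the ``HELP cluster'' (iii) $\Leftrightarrow$ (iv) $\Leftrightarrow$ (vi) $\Leftrightarrow$ (vii). The equivalence of (iii) and (iv) is Lemma~\ref{cor:3.2}. For (vii): since $\lambda\,m_+^r(\lambda)=\lambda^2 m_+(\lambda)$, condition \eqref{Everitt-m-function_3} is verbatim Everitt's condition \eqref{eq:ev71} for $m_+$ (the two descriptions of $\Gamma_\theta$ coincide), so Theorem~\ref{th:ev71} identifies it with the validity of \eqref{eq:help}, hence with (iv) and (iii). For (vi): from $m_+^r(\I y)=\I y\,m_+(\I y)$ one gets $\im m_+^r(\I y)=y\,\re m_+(\I y)$ and $\re m_+^r(\I y)=-y\,\im m_+(\I y)$, so (because $\im m_+(\I y)>0$) the ratio in \eqref{eq:kostenko_imre} equals $\re m_+(\I y)/\im m_+(\I y)$; thus (vi) is the imaginary-axis criterion of Theorem~\ref{th:ak_crit+} for $m_+$, i.e. item~(iv) of Lemma~\ref{cor:3.2}. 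Hence (iii), (iv), (vi), (vii) are all equivalent, and all equivalent to the validity of \eqref{eq:help}.

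Next I would attach (i), (ii), (v). We already have (i) $\Leftrightarrow$ (iv) from the cited classical result together with Theorem~\ref{Bennewitz_HELP}. To insert (ii): (i) $\Rightarrow$ (ii) because a Riesz basis of eigenfunctions makes $H$ similar to a self-adjoint operator, whose resolvent satisfies \eqref{lrg} with $C=\|W\|\,\|W^{-1}\|$ for the similarity $W$; and (ii) $\Rightarrow$ (iii) is exactly Theorem~\ref{th:connection}. Combined with (iii) $\Leftrightarrow$ (iv) $\Leftrightarrow$ (i) this closes the cycle through (ii) as well. Finally, for odd $r$ the $\C_+$-condition \eqref{eq:kostenko} of Theorem~\ref{kostenko} becomes precisely (v) (the two entries $m_\pm^r$ coincide, so that $m_+^r(\lambda)+m_-^r(-\lambda)$ turns into $m_+^r(\lambda)-m_+^r(-\lambda)$ in the sign convention used there); hence (i) $\Rightarrow$ (v) by Theorem~\ref{kostenko}, while the reverse implication (v) $\Rightarrow$ (ii) is the equivalence of \eqref{eq:kostenko} and \eqref{lrg} proved in \cite{Mal} (see the remark after Theorem~\ref{kostenko}). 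All seven statements are now linked.

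The bookkeeping above is routine; the two genuinely heavy inputs are external — the sufficiency direction ``Bennewitz's condition (equivalently \eqref{eq:help}) $\Rightarrow$ Riesz basis property'' for odd $r$, resting on Parfenov's interpolation criterion \cite{Par03} (and \cite{BF,Ben87}), and the hard half ``\eqref{eq:kostenko} $\Rightarrow$ \eqref{lrg}'' of Theorem~\ref{kostenko}, supplied by \cite{Mal}. Inside this proof the only delicate point is the reconciliation of the various $m$-function conventions: one must check that $m_\pm^r$ of \eqref{eq:ws_r} really satisfy $m_+^r=m_-^r$ in the odd case and that this is compatible with the combination $m_+^r(\lambda)+m_-^r(-\lambda)$ appearing in Theorem~\ref{kostenko}, so that (v) is indeed its odd specialization; once the conventions are pinned down, (vi) and (vii) collapse onto the $m_+$-characterizations of Section~\ref{sec:help} by the one-line computations indicated above.
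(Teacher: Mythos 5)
Your argument is correct in substance and, for most links, coincides with the paper's: (i)$\Rightarrow$(ii) by similarity to a self-adjoint operator, (ii)$\Rightarrow$(iii) by Theorem~\ref{th:connection}, the cluster (iii)$\Leftrightarrow$(iv)$\Leftrightarrow$(vi)$\Leftrightarrow$(vii) via Lemma~\ref{cor:3.2}, Theorem~\ref{th:ev71}, Theorem~\ref{th:ak_crit+} and $m_+^r(\lambda)=\lambda m_+(\lambda)$ from \eqref{eq:m_conn}, and Parfenov \cite{Par03} (or the \cite{BF} equivalence plus Theorem~\ref{Bennewitz_HELP}) to return to (i). Where you genuinely diverge is the handling of (v): the paper closes that loop internally, via (ii)$\Rightarrow$(v) (Theorem~\ref{kostenko}), then (v)$\Rightarrow$(vi) by simply restricting the supremum to $\lambda=\I y$, then (vi)$\Rightarrow$(vii)$\Rightarrow$(iv) by Lemma~\ref{cor:3.2} and \eqref{eq:m_conn}, so the hard converse ``\eqref{eq:kostenko}$\Rightarrow$\eqref{lrg}'' is never needed. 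You instead route (v)$\Rightarrow$(ii) through the equivalence proved in \cite{Mal}, a reference still in preparation which the paper quotes only in a remark and deliberately keeps out of this proof. Since you have already placed (vi) in the cluster, the cheaper (and self-contained) move is the paper's: specialize (v) to the imaginary axis.

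One concrete point where your convention bookkeeping does not work as written. With the definition \eqref{eq:ws_r} ($\psi_-^r=c^r-m_-^r s^r$, $\psi_-^r(-1,\lambda)=0$) and $|r|$ even, one computes $m_-^r=m_+^r$, so the odd specialization of \eqref{eq:kostenko} has denominator $\big|m_+^r(\lambda)+m_+^r(-\lambda)\big|$, not $\big|m_+^r(\lambda)-m_+^r(-\lambda)\big|$; your parenthetical claim that the sum ``turns into'' the difference would require $m_-^r=-m_+^r$, which contradicts the identity you yourself invoke. The sign matters: with the minus sign as printed in (v), the restriction to $\lambda=\I y$ gives the trivial bound $1/2$ (because $m_+^r(-\I y)=\overline{m_+^r(\I y)}$) and yields nothing like \eqref{eq:kostenko_imre}, whereas with the plus sign it gives exactly (vi). So the printed minus in (v) is evidently a slip in the statement, and both your identification and the step ``(v)$\Rightarrow$(vi) is obvious'' presuppose the plus combination; this should be said explicitly rather than deferred to ``the sign convention used there''. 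With that fixed, and granting the two external inputs you name (Parfenov's sufficiency and Theorem~\ref{kostenko} resp.\ \cite{Mal}), your chain of implications does close all seven equivalences.
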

\begin{proof}
$(i)\Rightarrow (ii)$ is well known. Theorem \ref{th:connection} establishes the implication $(ii)\Rightarrow (iii)$. $(iii)\Rightarrow (iv)$ follows from Theorem \ref{Bennewitz_HELP}. $(iv)\Rightarrow (i)$ was established in \cite[Theorem 6]{Par03}.  

Further, by Theorem \ref{kostenko}, $(ii)\Rightarrow (v)$. The implication $(v)\Rightarrow (vi)$ is obvious. $(vi)\Rightarrow (vii)$ follows from Lemma \ref{cor:3.2} and \eqref{eq:m_conn}. Implication $(vii)\Rightarrow (iv)$ follows from Lemma \ref{cor:3.2}. Noting that the equivalent $(ii)\Leftrightarrow (iv)$ is already established, we complete the proof.
\end{proof}
\begin{remark}
It is interesting to note that under the assumptions of Theorem \ref{th:sim_odd} the equivalence $(i)\Leftrightarrow (iii)$ was first observed in \cite[Theorem 4.3]{BF}. However, this result can be obtained as a combination of Bennewitz's criterion (Theorem \ref{Bennewitz_HELP}) and Parfenov's criterion \cite[Theorem 6]{Par03}. Namely, Parfenov proved that the eigenfunctions of \eqref{evp} with odd $r\in L^1(-1,1)$ form a Riesz basis of $L^2_{|r|}(-1,1)$ precisely if $r$ satisfies Bennewitz's condition \eqref{Parfenov}. Applying Theorem \ref{Bennewitz_HELP} and Lemma \ref{cor:3.2}, equivalence $(i)\Leftrightarrow (iii)$ follows.   
\end{remark}


\appendix

\section{Asymptotic behavior of $m$-functions}\label{sec:app}

In this appendix, we collect some information on high energy asymptotics of $m$-functions. For a detailed exposition and further results we refer to the excellent paper by Bennewitz \cite{Ben89}.

\subsection{Weyl--Titchmarsh $m$-functions}\label{ss:a01}
Let the function $r\in L^1(0,1)$ be positive on $(0,1)$. Consider the Sturm--Liouville spectral problem
\begin{align}\label{eq:a01}
&-\big(\frac{1}{r(x)}y'\big)'=\lambda\, y,\quad x\in (0,1);\\
&\qquad (r^{-1}y')(0)=(r^{-1}y')(1-)=0.\label{eq:a01}
\end{align}
Let $c(x,\lambda)$ and $s(x,\lambda)$ be the system of fundamental solutions satisfying \eqref{eq:fs} and let $\psi(x,\lambda)$ be the Weyl solution defined by \eqref{eq:weyl_s}. The Weyl--Titchmarsh $m$-function  is then given by
\be\label{eq:a03}
m(\lambda)=-\frac{\psi(0,\lambda)}{(r^{-1}\psi')(0,\lambda)}=\frac{(r^{-1}s')(1,\lambda)}{(r^{-1}c')(1,\lambda)},\quad (\lambda\notin\R).
\ee 

Firstly, it is possible (see for details \cite[\S 2]{Ben89}) to assign $m$-functions with all its usual properties to systems of equations on $[0,1]$ defined by 
\be\label{eq:a04}
\begin{cases}
u_1(x) = & u_1(0)+\int_{[0,x)}u_2(t)dR(t),\\
u_2(x)= & u_2(0)-\lambda\int_{[0,x)}u_1(t)dt,
\end{cases}
\ee
where the function $R$ is  increasing left-continuous and of bounded variation on $[0,1]$. Namely, fix a fundamental solution $U(x,\lambda)=\begin{pmatrix} c & s\\ c^{[1]} & s^{[1]}\end{pmatrix}$ of \eqref{eq:a04} satisfying the standard initial condition at $x=0$, $U(0,\lambda)=\begin{pmatrix} 1 & 0\\ 0 &1\end{pmatrix}$. 
Then define the solution $\Psi=\begin{pmatrix} \psi \\ \psi^{[1]} \end{pmatrix}$ such that
\be\label{eq:a05}
\Psi(x,\lambda):=U(x,\lambda)\begin{pmatrix} -m(\lambda) \\ 1 \end{pmatrix},\quad \psi^{[1]}(1,\lambda)=0.
\ee
The function $m$ is called the $m$-function of \eqref{eq:a04} subject to von Neumann boundary conditions. 
Notice that in the case $dR(x)=r(x)dx$ with a positive $r\in L^1(0,1)$, the $m$-functions \eqref{eq:a03} and \eqref{eq:a05} coincide. 

Further, applying the Lagrange formula, we get
\be\label{eq:a06}
\int_0^1|\psi(x,\lambda)|^2dx=\int_0^1|s(x,\lambda)-m(\lambda)c(x,\lambda)|^2dx=\frac{\im\, m(\lambda)}{\im\, \lambda},\quad (\lambda\notin\R).
\ee
Equation \eqref{eq:a06} defines {\em the Weyl circle} $C_{\rho}(z_0)=\{z\in\C:\ |z-z_0|=\rho\}$ {\em at} $\lambda$. The center and the radius of  $C_{\rho}(z_0)$ are given by 
\[
z_0(\lambda)=\frac{(s\overline{c}^{[1]} - s^{[1]}\overline{c})(1,\lambda)}{(c\overline{c}^{[1]} - c^{[1]}\overline{c})(1,\lambda)},\qquad \rho(\lambda)=\big(2|\im\, \lambda| \int_0^1|c(x,\lambda)|^2dx\big)^{-1}.
\]  
\begin{example}\label{ex:a01}
Let $dR(x)=a\delta(x)$ where $a>0$ and $\delta$ is the Dirac $\delta$-function, i.e., $R(x)=a\chi_{(0,1]}(x)$. Then \eqref{eq:a04} becomes
\[
\begin{cases}
u_1(x) = & u_1(0)+au_2(0)\chi_{(0,1]}(x)\\
u_2(x)= & u_2(0)-\lambda (u_1(0)+a u_2(0))x
\end{cases},\quad x\in(0,1).
\]
Therefore, 
\[
U(x,\lambda)=\begin{pmatrix} 1 & a\chi_{(0,1]}(x)\\ -\lambda x & 1-\lambda ax\end{pmatrix}
\]
and hence the $m$-function is given by
\[
m(\lambda)=a-\frac{1}{\lambda},\quad (\lambda\neq 0).
\]
The Weyl circle at $\lambda\in\C_+$ has its center at $z(\lambda)=a+\frac{\I}{2\im\, \lambda}$ and radius $\rho(\lambda)=\frac{1}{2\im\, \lambda}$. Note that a real point $x=a$ belongs to this circle for every $\lambda \in \C_+$. The latter is possible only in some degenerate cases. In particular, for systems \eqref{eq:a04} the following is true: {\em if the Weyl circle  at some $\lambda\in \C_+$ contains a real point $a\in\R$, then $dR=a\delta$}.   
\end{example}
 \subsection{Asymptotic behavior at $\infty$}\label{ss:a02} 
 Since  $r$ is positive, the function $R:[0,1]\mapsto \R_+$,
 \[
 R(x):=\int_0^x r(t)dt,
 \]
 is strictly increasing and absolutely continuous on $[0,1]$. Let $R^{-1}$ denote its inverse. Let also $f$ be the inverse of $1/(R^{-1})^2$. Note that $f\to 0$ as argument goes to $+\infty$. The function $f$ describes the asymptotic behavior of $m(\cdot)$ at large $\lambda$. For instance, by \cite[Theorem 3.3]{Ben89}, there are absolute constants $M,N>0$ such that
 \[
 N|\sin\arg\, \lambda|\le \frac{|m(\lambda)|}{f(|\lambda|)}\le M/|\sin\arg\, \lambda|
 \]
 holds for a sufficiently large $|\lambda|$. 
 In particular, the latter implies that 
 \[
 tf(t)\to +\infty\quad \text{as}\quad t\to +\infty.
 \]
 To see this notice that, by  \cite[Theorem 4.2]{KK1}, $\lim_{y\uparrow +\infty} y\im \, m(\I y)=\int_{\R_+}d\tau_+=\infty$.
 
 Further, for $s\in (0,1]$ define the function 
 \[
P_s(x):= \frac{R(sx)}{R(s)},\quad x\in [0,1].
 \]
 Clearly, $P_s$ maps $[0,1]$ onto $[0,1]$. By the Helly theorem, every positive sequence, which tends to $0$, has a subsequence $s_{k}$ such that $P_k:=P_{s_{k}}$ converges pointwise and boundedly to some increasing function $P_\infty:[0,1]\to [0,1]$. Define the sequence $\rho_k$ as follows $s_k:=1/(\rho_kf(\rho_k))$, $k\in\N$. Note that $\rho_k\to +\infty$ since $s_k\to 0$. Therefore, (see the proof of \cite[Lemma 4.7]{Ben89}), $ m(\rho_k\mu)/f(\rho_k)$ is asymptotically in the Weyl circle at $\mu$ of the limit system \eqref{eq:a04}, i.e., in the Weyl circle of \eqref{eq:a04} with $P=P_\infty$. This holds uniformly for $\mu$ in any compact set in $\C_+$.


\quad

{\bf Acknowledgements.} I am deeply grateful to Mark Malamud for numerous fruitful and stimulating discussions. I am also indebted to Anton Lunev for critical remarks, which have helped to improve the exposition.      
 

\end{document}